%%%%%%%%%%%%%%%%%%%%%%%%%%%%%%%%%%%%%%%%%%%%%%%%%%%%%%%%%%%%%%%%%%%%%%%%%%%
%% Trim Size: 9.75in x 6.5in
%% Text Area: 8in (include Runningheads) x 5in
%% ws-m3as.tex   :   28-9-2018
%% Tex file to use with ws-m3as.cls written in Latex2E.
%% The content, structure, format and layout of this style file is the
%% property of World Scientific Publishing Co. Pte. Ltd.
%% Copyright 2018 by World Scientific Publishing Co.
%% All rights are reserved.
%%%%%%%%%%%%%%%%%%%%%%%%%%%%%%%%%%%%%%%%%%%%%%%%%%%%%%%%%%%%%%%%%%%%%%%%%%%%
%

\documentclass{ws-m3as}

\usepackage{amsmath,amsfonts, amssymb}
\usepackage{pifont,mathtools,mathrsfs}
\usepackage{enumitem,enumerate}
\usepackage{algorithm,algpseudocode}

\usepackage[pdftex,dvipsnames]{xcolor}
\usepackage{psfrag, graphicx,float,indentfirst,subcaption}
\usepackage{mathbbol}
\usepackage{booktabs, cellspace, hhline}

\usepackage[super]{cite}
\usepackage{hyperref} %
\hypersetup{
	plainpages=false,
    colorlinks,
    linktocpage=true,   %
    linkcolor={red!50!black},
    citecolor={blue!50!black},
    urlcolor={blue!80!black}
} 
\usepackage[hyperpageref]{backref}

\numberwithin{equation}{section}
\numberwithin{figure}{section}

\usepackage{xargs}  %

\newtheorem{thm}{Theorem}[section]
\newtheorem{rem}{Remark}[section]

\newcommand{\commentout}[1]{{}} %

\newcommand{\bfa}{{\mbf a}}

\newcommand{\bfE}{{\mbf E}}

\newcommand{\bff}{{\mbf f}}

\newcommand{\bfH}{{\mbf H}}

\newcommand{\bfL}{{\mbf L}}

\newcommand{\bfp}{{\mbf p}}

\newcommand{\bft}{{\mbf t}}

\newcommand{\bfu}{{\mbf u}}

\newcommand{\bfv}{{\mbf v}}
\newcommand{\bfw}{{\mbf w}}

\newcommand{\bfx}{{\mbf x}}

\newcommand{\bfy}{{\mbf y}}

\newcommand{\bfxi}{\boldsymbol{\xi}}
\newcommand{\bfeta}{\boldsymbol{\eta}}

\newcommand{\vertiii}[1]{{\left\vert\kern-0.25ex\left\vert\kern-0.25ex\left\vert #1
    \right\vert\kern-0.25ex\right\vert\kern-0.25ex\right\vert}}
    \newcommand{\vertii}[1]{{\left\vert\kern-0.25ex\left\vert #1
    \right\vert\kern-0.25ex\right\vert}}
\newcommand{\verti}[1]{{\left\vert #1
    \right\vert}}

\graphicspath{{./figures/}}

\newcommand{\curl}{\operatorname{curl}}
\renewcommand{\div}{\operatorname{div}}

\newcommand{\bs}{\boldsymbol}
\newcommand{\mbf}{\boldsymbol}

\newcommand{\revision}[1]{\textcolor{black}{#1}}
\newcommand{\dd}{\,{\rm d}}

\begin{document}

\markboth{S. Cao, L. Chen \& R. Guo}{A VEM for 2D Maxwell interface problems}

%%%%%%%%%%%%%%%%%%% Publisher's Area please ignore %%%%%%%%%%%%%%%%%%%%%%%
%
\catchline{}{}{}{}{}
%
%%%%%%%%%%%%%%%%%%%%%%%%%%%%%%%%%%%%%%%%%%%%%%%%%%%%%%%%%%%%%%%%%%%%%%%%%%

\title{A Virtual Finite Element Method for Two-Dimensional Maxwell Interface Problems with a Background Unfitted Mesh}

\author{Shuhao Cao}

\address{Department of Mathematics and Statistics, Washington University in St. Louis\\
 St. Louis, MO 63130\\
s.cao@wustl.edu}

\author{Long Chen}
\address{Department of Mathematics, University of California Irvine\\
 Irvine, CA 92697\\
chenlong@math.uci.edu}

\author{Ruchi Guo}
\address{Department of Mathematics, University of California Irvine\\
 Irvine, CA 92697\\
ruchig@uci.edu}

\maketitle

%\begin{history}
%\received{(Day Month Year)}
%\revised{(Day Month Year)}
%%\accepted{(Day Month Year)}
%\comby{(xxxxxxxxxx)}
%\end{history}

\begin{abstract}
A virtual element method (VEM) with the first order optimal convergence order is developed for solving two-dimensional Maxwell interface problems on a special class of polygonal meshes that are cut by the interface from a background unfitted mesh. A novel virtual space is introduced on a virtual triangulation of the polygonal mesh satisfying a maximum angle condition, which shares exactly the same degrees of freedom as the usual $\bfH(\curl)$-conforming virtual space. This new virtual space serves as the key to prove that the optimal error bounds of the VEM are independent of high aspect ratio of the possible anisotropic polygonal mesh near the interface.
\end{abstract}

\keywords{Virtual elements; Maxwell interface problems; $\bfH(\curl)$-elliptic equations; anisotropic error analysis}

\ccode{AMS Subject Classification: 65N12, 65N15, 65N30, 46E35}

\section{Introduction}
\label{sec:intro}
Maxwell interface problems widely appear in a large variety of science and engineering applications.
In this article, we propose a virtual element method (VEM) to solve a two-dimensional (2D) 
$\bfH(\text{curl})$-elliptic interface problem that originates from Maxwell equations.
One distinct advantage of the proposed method is its flexibility on the mesh generation to cater the  interface. The mesh for computation is obtained from a background unfitted mesh by cutting interface elements into triangles and quadrilaterals, and the optimal convergence order is guaranteed independent of the potential mesh anisotropy.

To describe the idea, we let $\Omega\subseteq\mathbb{R}^2$ be a bounded domain and let $\Gamma\subseteq \Omega$ be a closed smooth interface curve, as illustrated by the left plot in Figure \ref{fig:interf_dom}. The interface $\Gamma$ cuts $\Omega$ into two subdomains $\Omega^{\pm}$ occupied by media with different magnetic and electric properties. We consider the following $\bfH(\text{curl};\Omega)$-elliptic interface problem for the electric field $\bfu: \Omega \to \mathbb{R}^2$:
\begin{subequations}
\label{model}
\begin{align}
\label{inter_PDE}
\underline{ \text{curl}}~\alpha \, \text{curl}~\bfu + \beta \bfu = \bff \;\;\;\; & \text{in} \; \Omega = \Omega^-  \cup \Omega^+,
\end{align}
with $\bff\in\bfL^2(\Omega)$, subject to the Dirichlet boundary condition:
\begin{align}
\label{bc}
\bfu\cdot\bft = 0 \;\;\;\;\; & \text{on} \; \partial\Omega,
\end{align}
where the operator $\text{curl}$ is for vector functions $\bfv=(v_1,v_2)^{\intercal}$ such that $\text{curl}~\bfv=\partial_{x_1}v_2 - \partial_{x_2}v_1$,
while $\underline{\text{curl}}$ is for scalar functions $v$ such that $\underline{\text{curl}}~v = \left ( \partial_{x_2}v, - \partial_{x_1}v \right )^{\intercal}$ with ``$^{\intercal}$" denoting the transpose herein. The coefficients $\alpha=\alpha^{\pm}$ and $\beta=\beta^{\pm}$ in $\Omega^{\pm}$ are assumed to be positive piecewise constant functions of which the locations of the discontinuity align with one another. Moreover, we consider the following jump conditions at the interface $\Gamma$:
\begin{align}
[\bfu\cdot\bft]_{\Gamma} &:=  \bfu^+\cdot\bft -  \bfu^-\cdot\bft = 0,  \label{inter_jc_1} \\
[\alpha\, \text{curl}~\bfu]_{\Gamma} &:=  \alpha^- \text{curl}\,\bfu^+ - \alpha^+ \text{curl}\,\bfu^-   = 0,
\label{inter_jc_2}
\end{align}
\end{subequations}
where $\bs t$ denotes a tangential vector to $\Gamma$. The condition \eqref{inter_jc_1} is due to the tangential continuity of $\bs H(\curl)$ functions and \eqref{inter_jc_2} is from the fact that $H(\underline{\text{curl}};\Omega)$ is isomorphic to $H^1(\Omega)$ in 2D.
The interface model \eqref{model} arises from each time step in a stable time-marching scheme for the eddy current computation of Maxwell equations~\cite{2000AmmariBuffaNedelec,2000BeckHiptmairHoppeWohlmuth}.
In this model, $\alpha$ denotes the magnetic permeability and $\beta\sim \sigma/\triangle t$ 
is a scaling of the conductivity $\sigma$ by the time-marching step size $\triangle t$.

For Maxwell equations, $\bfH(\text{curl})$-conforming N\'ed\'elec finite element spaces are widely used~\cite{1999CiarletZou,2002Hiptmair,1992MonkSiam}. As for interface problems, the authors in Ref.~\refcite{2012HiptmairLiZou} analyze the standard finite element methods (FEMs) for $\bfH(\text{curl})$-elliptic equations. A semi-discrete analysis for Maxwell interface problems with low regularity is provided in Ref.~\refcite{2000Zhao}.
In addition, due to the potentially low regularity, there are many works focusing on developing a posteriori error estimators and adaptive FEM~\cite{2015CaiCao,2009ChenXiaoZhang,2016DuanQiuTanZheng}.

Numerical methods for solving interface problems based on unfitted meshes are attractive since they circumvent the burden of generating high-quality interface-fitted or interface-approximated meshes which may be time-consuming in three dimensions (3D) or for moving interface problems. There have been a lot of works in this field on solving $H^1$-elliptic interface problems, see e.g., Ref.~\refcite{2015BurmanClaus,2016GuoLin,1994LevequeLi,2015ChenwuXiao} and the reference therein. However, there are much fewer works on solving Maxwell interface problems. Typical examples include matched interface and boundary (MIB) formulation~\cite{2004ZhaoWei},
and non-matching mesh methods~\cite{2016CasagrandeHiptmairOstrowski,2016CasagrandeWinkelmannHiptmairOstrowski,2000ChenDuZou}.
Recently, a penalty method is developed requiring a higher regularity \cite{2020LiuZhangZhangZheng}.

A main difficulty for solving \revision{even non-interface} $\bfH(\curl)$ problems stems from the low regularity of the exact solution. \revision{In a practical setting with an $L^2$-integrable boundary condition, the geometry singularities result the functions in $\bfH(\curl)\cap \bfH(\div)$ having barely an $H^{1/2}$ regularity\cite{Costabel1990remark}, due to the low regularity of the pure Neumann problem for the potential function used to construct a Helmholtz decomposition.} 
In the context of the a priori error analysis, $\bfu\in\bfH^1(\curl ;\Omega) := \{\bs u\in \bs H^1(\Omega), \curl \bs u \in H^1(\Omega)\}$, the anticipated optimal convergence rate highly relies on the conformity of approximation spaces, due to the $\mathcal{O}(h^{1/2})$ approximation order on the boundaries of elements for functions in $\bfH^1(\curl ;\Omega)$; specifically, see Lemma 5.52 in Ref.~\refcite{2003Monk}. For example, when solving Maxwell equations by discontinuous Galerkin (DG) methods~\cite{2005HoustonPerugiaSchneebeli,2004HoustonPerugiaSchotzau,2005HoustonPerugiaDominik}, penalties are in general needed on boundary of elements due to the non-conformity of DG spaces, and the standard argument directly applying the trace inequalities may only yield suboptimal convergence rates. Instead, the approaches for the analysis of DG methods employ an $\bfH(\curl)$-conforming subspace of the broken DG space to overcome this issue.

This essential difficulty will be carried over to the development of an optimally convergent method on an unfitted mesh for Maxwell interface problems, since almost all methods on unfitted meshes use non-conforming spaces for approximation.
Together with a worsened regularity due to the presence of the interface \cite{Huang;Zou:2007Uniform,1999CostabelDaugeNicaise,BonitoGuermondLuddens2013Regularity}, this problem becomes distinctly challenging. 
\revision{Moreover, even if these non-conforming spaces have conforming subspaces, it is unclear whether these subspaces} have sufficient approximation capabilities such that the approaches in Ref.~\refcite{2005HoustonPerugiaSchneebeli,2004HoustonPerugiaSchotzau,2005HoustonPerugiaDominik} can be applied.
Indeed, Casagrande et al. show that using Nitsche's penalties on interface edges can only yield suboptimal convergence rates in both computation and analysis~\cite{2016CasagrandeHiptmairOstrowski,2016CasagrandeWinkelmannHiptmairOstrowski}. Recently, this issue was further explored numerically in Ref.~\refcite{2020RoppertSchoderTothKaltenbacher}. An alternative approach is to use immersed finite element methods in a Petrov-Galerkin formulation~\cite{2020GuoLinZou},
where the standard conforming N\'ed\'elec space is used as the test function space to remove the non-conformity errors. However, the resulted matrix may not be symmetric anymore, which could cause troubles for fast solvers \revision{and introduce the possibility of the loss of energy conservation}.

Motivated by the work in Ref.~\refcite{Chen;Wei;Wen:2017interface-fitted,2018CaoChen}, we believe that the virtual element method (VEM) provides a new direction to approximate Maxwell interface problems that can achieve an optimal convergence on (background) unfitted meshes. 
The VEM was first introduced in Ref.~\refcite{2013BeiraodaVeigaBrezziCangiani} to solve $H^1$-elliptic equations, where the $H^1$-virtual space consists of virtual shape functions that are solutions to local problems on elements with general polygonal shapes. 
The $\bfH(\curl)$-conforming virtual space was then introduced in Ref.~\refcite{2017VeigaBrezziDassiMarini,2016VeigaBrezziMarini} to solve magnetostatic problems. As one attractive feature, the underling virtual space for approximation is always conforming on an almost arbitrary polygonal mesh of the computation domain. It is our key motivation to use it for solving Maxwell interface problems on meshes that are generated from a background unfitted mesh. 
Different from Ref.~\refcite{2017VeigaBrezziDassiMarini,2016VeigaBrezziMarini} that use a mixed formulation~\cite{1989Kikuchi}, in this work we shall study the symmetric and positive definite $\bfH(\curl)$-elliptic equation \eqref{inter_PDE} as the model problem. 
Very recently, a similar VEM for Maxwell equations with the lowest order elements and on shape-regular meshes is analyzed in Ref.~\refcite{2021VeigaDassiMascotto}. Compared with Ref.~\refcite{2021VeigaDassiMascotto}, our results focus more on the discretization's robustness to the shape of elements, while the analysis only relies on mature simplicial finite element tools.

In our analysis, the key to achieve the optimal error bound regardless of element shapes is a novel virtual element space that shares exactly the same degrees of freedom of the one constructed in Ref.~\refcite{2017VeigaBrezziDassiMarini,2016VeigaBrezziMarini}. Thus, compared with the virtual space in Ref.~\refcite{2017VeigaBrezziDassiMarini,2016VeigaBrezziMarini}, this new space preserves all the necessary information for computation, including the same projection and curl values.
This space is constructed as a subspace of the standard N\'ed\'elec space on a further (virtual) triangulation of the polygonal mesh that satisfies a maximum angle condition~\cite{1999AcostaRicardo}.
Locally on each polygonal element, the new virtual functions can be also considered as discrete harmonic extensions according to the boundary conditions defined by the degrees of freedom, opposed to the continuous extensions of the usual virtual functions~\cite{2017VeigaBrezziDassiMarini,2016VeigaBrezziMarini}. A noteworthy advantage of using this new space is that we are able to establish local Poincar\'e-type inequalities and optimal approximation capabilities for a large class of polygonal-shaped elements, both of which are independent of element anisotropy. 
These estimates are crucial intermediate results toward the final optimal error bound.
A related work \cite{2009ChenXiaoZhang} constructs sub-meshes on interface elements of a background unfitted mesh to perform computation. One essential difference between our proposed method and Ref.~\refcite{2009ChenXiaoZhang} is that the virtual mesh and space are only used for analysis in our work, while the computation procedure is the same as the ordinary lowest-order edge VEM. The convergence is guaranteed independent of the element shape, provided that the background mesh is shape-regular.

This article consists of 5 additional sections. In the next section, we introduce a background unfitted mesh and \revision{a piecewise linear interface-approximated mesh} according to the interface geometry. In Section \ref{sec:VEM}, we describe virtual spaces and projection operators. In Section \ref{sec:VEM_err_eqn}, we present the numerical scheme and derive the error equation. In Section \ref{sec:IFE_disre}, we estimate the interpolation errors. In Section \ref{sec:convergence}, we show that the convergence is of optimal order. In the last section, some numerical examples are presented to verify the theoretical estimates.

\section{Preliminaries}

In this section, we first describe an unfitted background triangular mesh, and then locally partition a triangular interface element into \revision{a linear interface-approximated mesh} used for computation. We then introduce Sobolev spaces for $\boldsymbol{H}(\curl)$-interface problems. Although the triangular and quadrilateral shape is the focus of this work, we highlight that most key results are actually established and presented for more general polygonal element shapes. 

\begin{figure}[h]
\centering
   \includegraphics[width=1.6in]{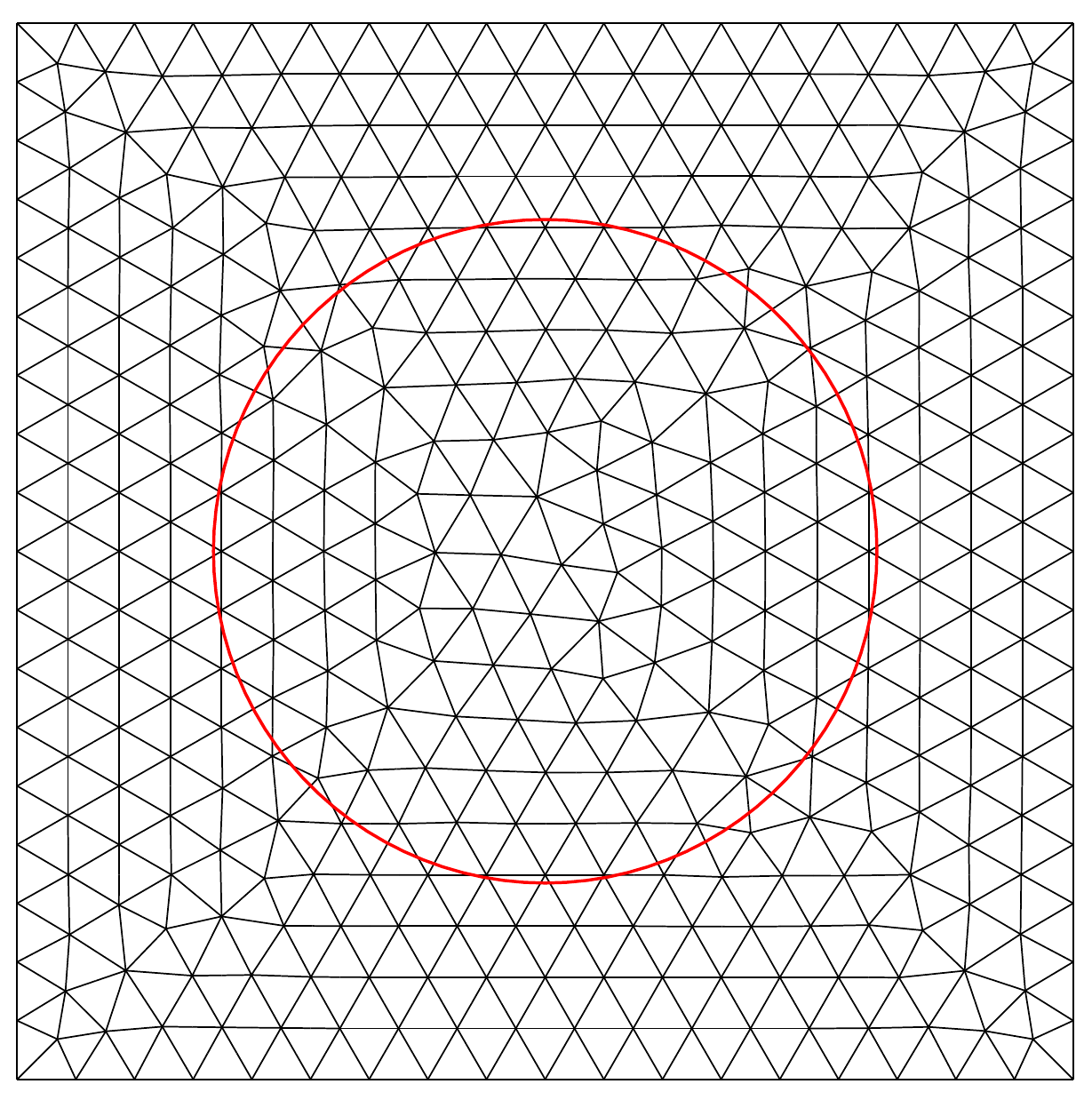}
\qquad \qquad \qquad
   \includegraphics[width=1.6in]{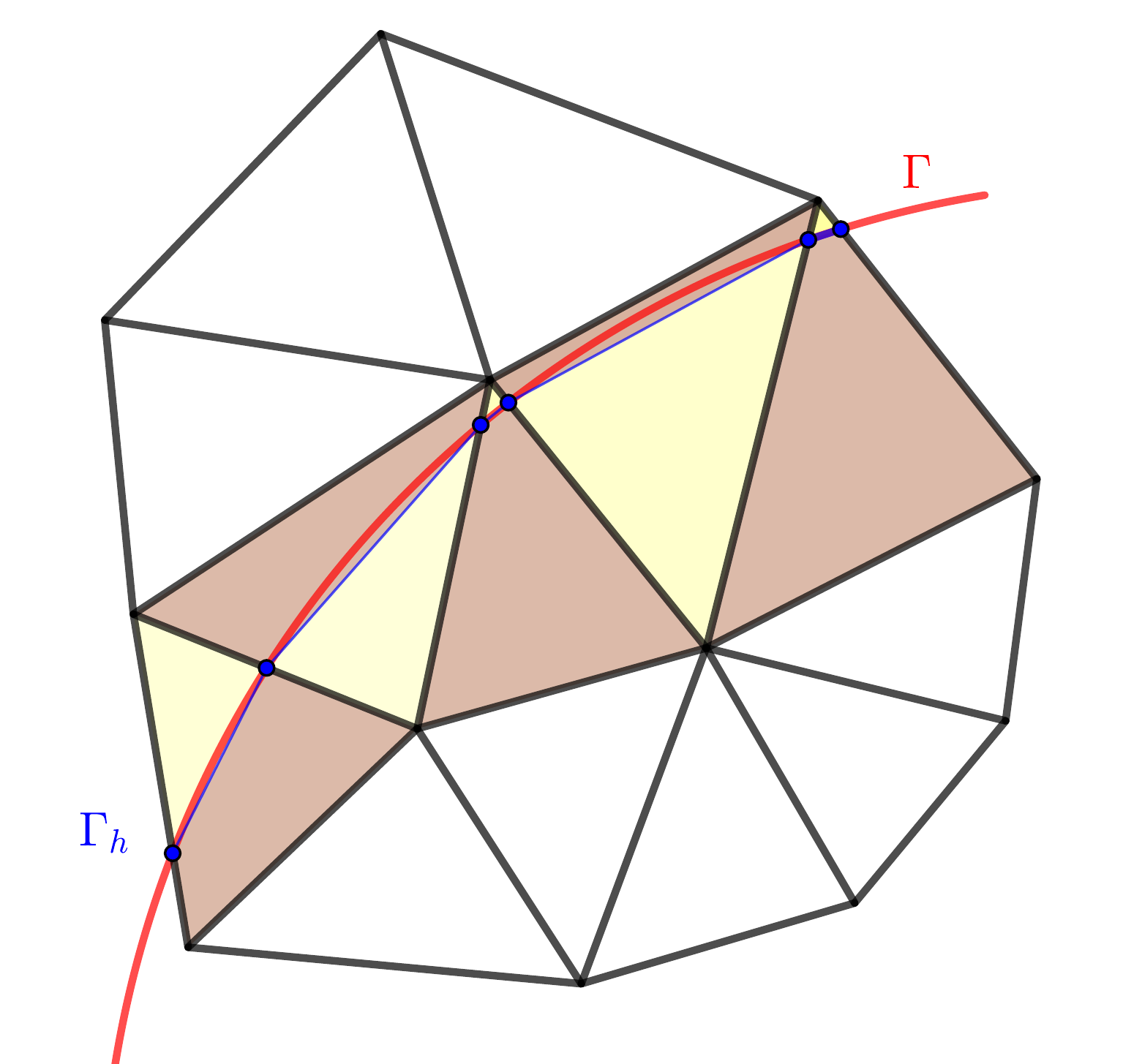}
  \caption{Left: a background unfitted mesh. Right: interface elements in the unfitted mesh are further partitioned into quadrilateral and triangular elements, shaded by brown and yellow colors respectively.}
  \label{fig:interf_dom}
\end{figure}

Consider an interface-independent shape-regular triangular mesh of the domain $\Omega$. Note that this mesh could be simply taken as a highly-structured mesh due to the interface independence. We shall call it a background mesh, and denoted it by $\mathcal{T}^B_h$. Another example of $\mathcal{T}^B_h$ is a uniform Cartesian grid which is widely used for unfitted mesh methods. 
The proposed analysis approach can be easily adapted to this grid.  If a triangular element in $\mathcal{T}^B_h$ intersects the interface, then it is called an interface element. The collection of interface elements is denoted as $\mathcal{T}^{Bi}_h$. The remaining elements are called non-interface elements. For this background mesh, we further make the following assumptions:
\begin{itemize}
  \item[(\textbf{A})]\label{asp:background} Each interface element intersects with $\Gamma$ at most two distinct points on two different edges.
 \item[(\textbf{B})] Each interface element does not intersect with the boundary of $\Omega$.
\end{itemize} 
By assumption (\textbf{A}), on an interface element $K\in\mathcal{T}^{Bi}_h$ we define $\Gamma^{K}_h$ as the line connecting the two intersection points. All these connected small segments, denoted as $\Gamma_h$, form a piecewise linear approximation of the true interface $\Gamma$. In addition, by the assumption (\textbf{A}), each triangular interface element $K$ is cut by $\Gamma^{K}_h$ into a triangular and a quadrilateral subelement from which \revision{a piecewise linear interface-approximated mesh can be generated. We denote this new mesh by $\mathcal{T}_h$}. The collections of quadrilateral and triangular elements in $\mathcal{T}_h$ resulted by the interface-cutting are denoted by $\mathcal{T}^q_h$ and $\mathcal{T}^t_h$, respectively. Those elements all have one edge aligning with the interface approximately, and thus are also called interface elements in $\mathcal{T}_h$. Clearly, there holds
$$
\overline{ \cup \{ K\in \mathcal{T}^t_h\cup \mathcal{T}^q_h \} } = \overline{ \cup \{ K\in \mathcal{T}^{Bi}_h \} }.
$$
Note that $\mathcal{T}_h$ and $\mathcal{T}^B_h$ are only different on the interface elements. Furthermore, an interface element $K$ is assumed to be cut into $K^-_h$ and $K^+_h$ by $\Gamma^{K}_h$. The mismatch portion, with $K^{\pm}:=K\cap\Omega^{\pm}$ cut from the original interface, is denoted by $K_{\rm int}$, indicated by the shaded region in the left plot of Figure \ref{fig:interf_elem_cut}.

In addition, for each interface edge $\Gamma^K_h$, we assume there is a shape regular triangle $B_h^K\subseteq \Omega$ with the base $\Gamma_h^K$ and a height $\mathcal{O}(h_K)$ which will be used to lift the trace on $\Gamma_h^K$ to the interior of domain $\Omega$. Therefore $B_h^K$ is not required to align with any elements in the mesh. We further assume the union of $B_h^K$'s for all interface edges have finite overlapping. Note that for the considered background regular triangular mesh, for each interface element $K$, this $B_h^K$ certainly exists and can be further shown to be contained in $K$.

\begin{figure}[h]
\centering
   \includegraphics[width=2in]{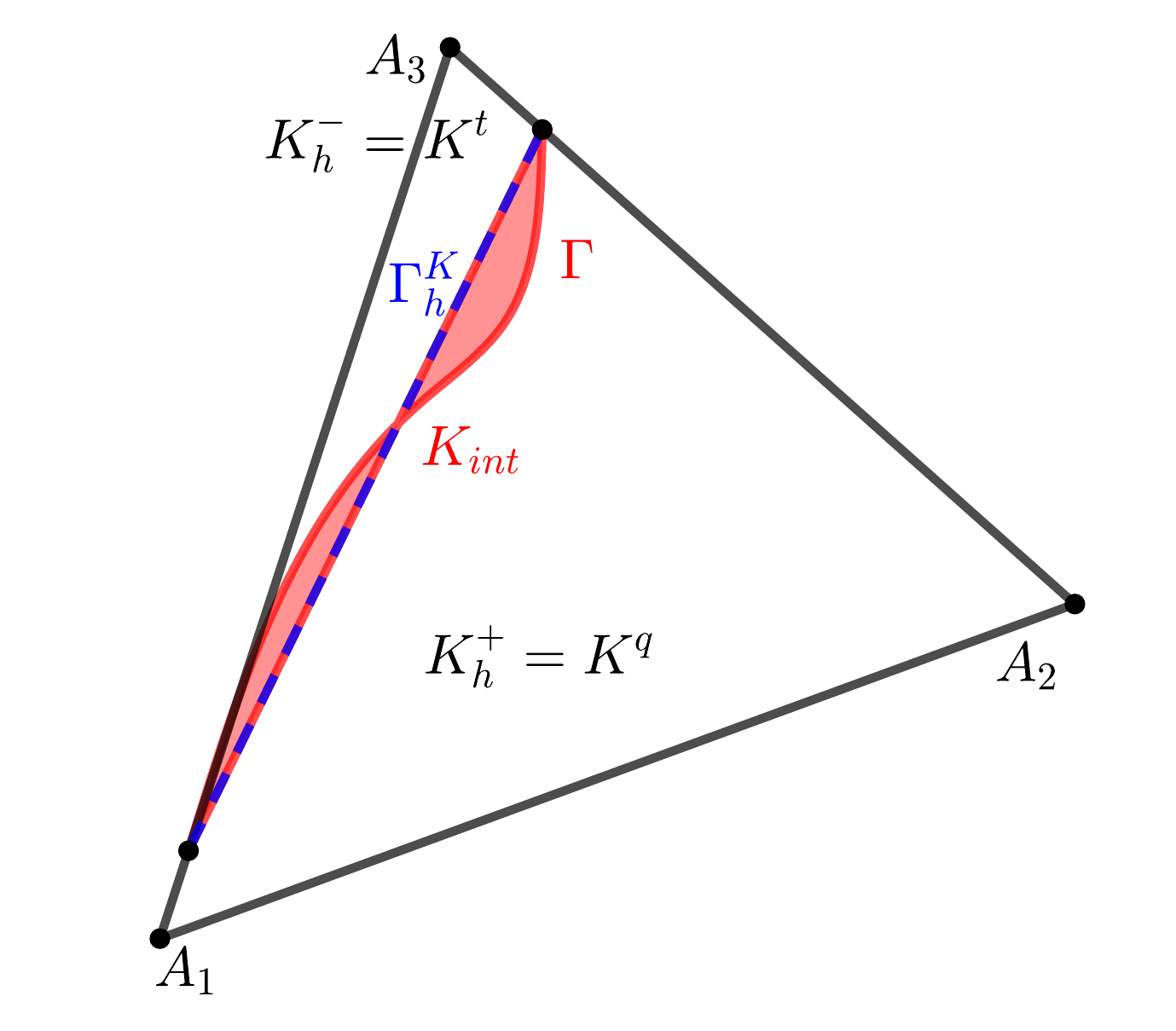}
   \quad
   \includegraphics[width=2in]{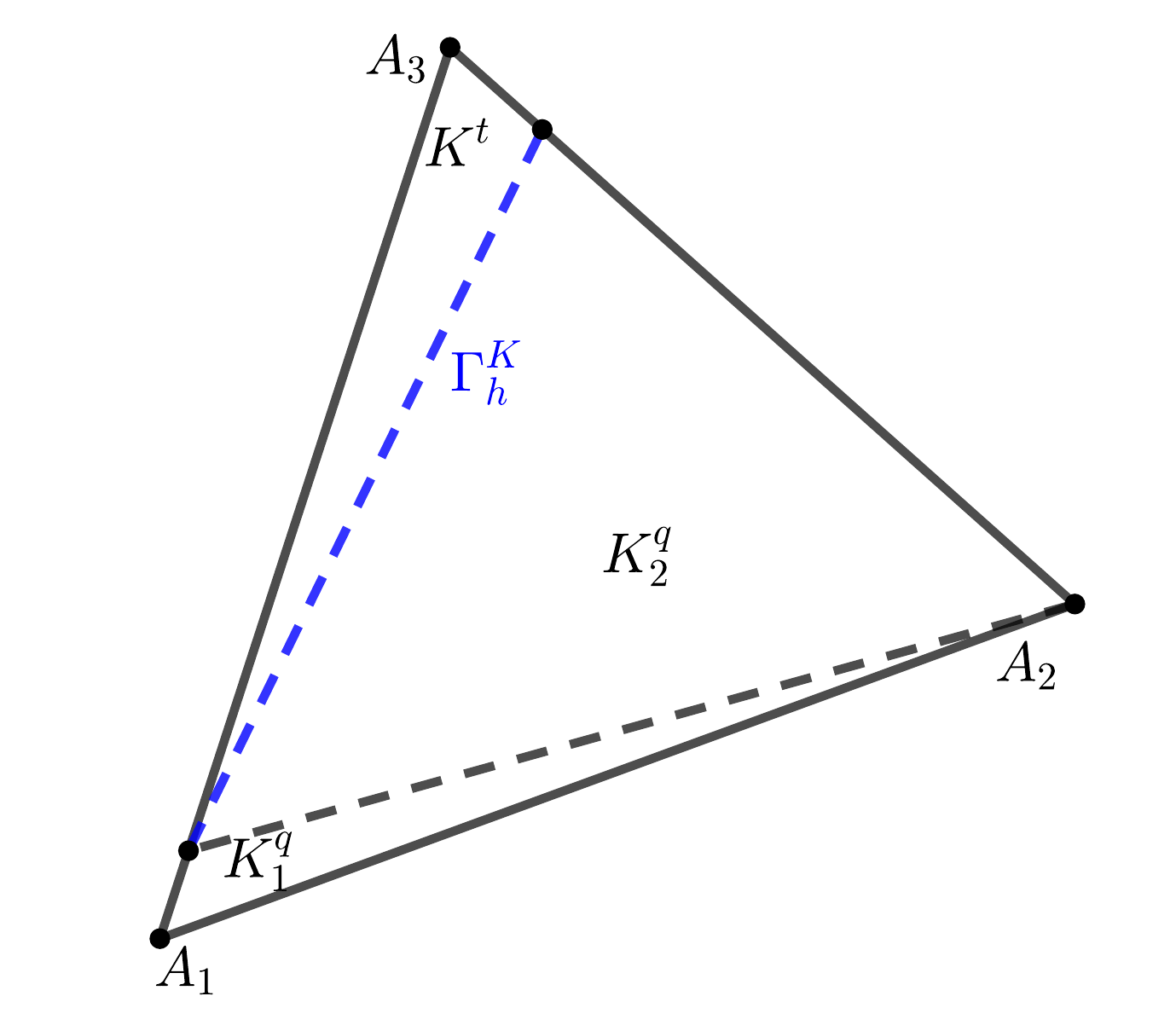}
  \caption{Left: an interface element $K\in\mathcal{T}^B_h$ is cut to a triangular element $K^t$ and a quadrilateral element $K^q$ of which both are in $\mathcal{T}_h$. Right: the quadrilateral element is further cut into two triangular elements $K^q_1$ and $K^q_2$.}
  \label{fig:interf_elem_cut}
\end{figure}

Moreover, we assume that the interface is well-resolved by the mesh, and it can be quantitatively described in terms of the following lemma~\cite{2016GuoLin}. 

\begin{lemma}
\label{lemma_interface_flat}
Suppose the mesh is sufficiently fine in the sense that there exists a certain threshold $h_0$ such that when $h<h_0$; on each interface element $K\in\mathcal{T}^{Bi}_h$, there exist a constant $C$ independent of the interface location inside $K$ and $h_K$ such that for every point $x\in\Gamma\cap K$ with its orthogonal projection $x^{\bot}$ onto $\Gamma^{K}_h$,
\begin{align}
   \emph{dist}(x,x^{\bot}) \le  C h_{K}^2. \label{lemma_interface_flat_eq1}  
\end{align}
\end{lemma}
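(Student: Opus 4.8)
The plan is to reduce the statement to the elementary geometric fact that a $C^2$ curve deviates from its chord by at most a quadratic amount in the chord length, with the quadratic constant controlled by the uniformly bounded curvature of $\Gamma$. Since $\Gamma$ is a compact smooth interface curve, its curvature is bounded by some $\kappa_0$ depending only on $\Gamma$. Fix an interface element $K$ and let $x_1, x_2$ be the two points at which $\Gamma$ meets $\partial K$, so that $\Gamma^K_h$ is the segment joining them, of length $\ell := |x_1 - x_2| \le h_K$ since both endpoints lie in $\overline K$. First I would choose the threshold $h_0 = h_0(\kappa_0)$ so small that, whenever $h < h_0$, the arc $\Gamma \cap K$ is a graph over the chord: introduce orthonormal coordinates $(s,t)$ with the $s$-axis along $\Gamma^K_h$ and the origin at $x_1$, and write the arc as $\{(s, g(s)) : 0 \le s \le \ell\}$ with $g \in C^2$ and $g(0) = g(\ell) = 0$. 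That such a representation exists for small $h$ is a consequence of the curvature bound, which prevents the arc from turning enough to fold over its chord on a length scale of order $h_K$.

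In these coordinates, the orthogonal projection of a point $x = (s, g(s))$ on the arc onto the line containing $\Gamma^K_h$ is $x^{\bot} = (s,0)$, and for $h < h_0$ this foot lies in the interior of the segment, so that $\mathrm{dist}(x, x^{\bot}) = |g(s)|$. Because $g$ vanishes at the two endpoints $s = 0$ and $s = \ell$, the value $|g(s)|$ is exactly the nodal linear interpolation error of $g$ on $[0,\ell]$, which the standard one-dimensional estimate bounds by
\begin{equation*}
|g(s)| \le \tfrac{1}{8}\, \ell^2\, \|g''\|_{L^\infty(0,\ell)} \le \tfrac{1}{8}\, h_K^2\, \|g''\|_{L^\infty(0,\ell)}.
\end{equation*}
It then remains to control $\|g''\|_{L^\infty(0,\ell)}$ by $\kappa_0$. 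Since the signed curvature of the graph is $\kappa = g''/(1+(g')^2)^{3/2}$ and the slope $|g'|$ stays uniformly bounded on the arc once $h < h_0$, one obtains $\|g''\|_{L^\infty(0,\ell)} \le C\kappa_0$ with $C$ absolute. Combining the two bounds yields $\mathrm{dist}(x, x^{\bot}) \le C\kappa_0 h_K^2$, which is the claimed estimate \eqref{lemma_interface_flat_eq1}.

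Finally I would verify that the resulting constant is independent of both $h_K$ and the location of $\Gamma$ inside $K$, as the lemma requires: every ingredient above---the curvature bound $\kappa_0$, the interpolation constant $1/8$, and the slope bound used to pass from curvature to $g''$---depends only on the global geometry of $\Gamma$ and on $h_0$, never on where the chord sits within $K$. The one genuinely delicate point, and where I would concentrate the care, is the passage from ``$\Gamma$ is smooth'' to the uniform graph representation together with the slope bound $|g'| \le C$ valid for \emph{every} interface element simultaneously; this is precisely where the assumption that the mesh is sufficiently fine ($h < h_0$) enters, and it is what forces the threshold $h_0$ to depend on the curvature of $\Gamma$ yet on nothing element-specific.
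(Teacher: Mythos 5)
The paper does not actually prove this lemma: it is quoted from Ref.~\refcite{2016GuoLin}, with only the remark that the dependence of $h_0$ on the curvature of $\Gamma$ can be found there. Your argument is the standard one behind that cited result --- write the arc $\Gamma\cap K$ as a graph $g$ over its chord, observe that $|g(s)|$ is the error of the vanishing linear interpolant so that $|g(s)|\le \tfrac18 \ell^2\|g''\|_{L^\infty}$ with $\ell\le h_K$, and convert the uniform curvature bound $\kappa_0$ of the compact smooth curve into a bound on $g''$ via the slope control that the smallness threshold $h_0(\kappa_0)$ provides --- and it is correct, with the genuinely delicate step (the uniform graph representation and slope bound, which also guarantees the arc cannot fold back over the chord inside an element of diameter $h_K\ll 1/\kappa_0$) correctly identified as the place where $h<h_0$ is used. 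Since the paper offers no proof to compare against, your self-contained derivation is a legitimate and complete substitute for the citation.
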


The explicit dependence of $h_0$ on the curvature of the interface can be found in Ref.~\refcite{2016GuoLin}. An adaptively-generated background mesh can be found in Ref.~\refcite{Wei;Chen;Huang;Zheng:2014Adaptive} capturing large curvature of the interface. Now following the convention in Ref.~\refcite{2012HiptmairLiZou,2010LiMelenkWohlmuthZou}, we introduce the $\delta$-strip:
\begin{equation}
\label{delta_strip_1}
S_{\delta} := \{ x\in\Omega : \text{dist}(x,\Gamma)< \delta \}, ~~~ \text{and} ~~~ S^{\pm}_{\delta} := \{ x\in\Omega^{\pm} : \text{dist}(x,\Gamma)< \delta \}.
\end{equation}
By estimate \eqref{lemma_interface_flat_eq1}, we have 
\begin{equation}
\label{delta_strip_2}
 \cup\{ K_{\rm int}: K\in \mathcal{T}^{Bi}_h \} \subseteq S_{\delta}, ~~~ \delta\le C_{\Gamma}h^2
\end{equation}
with the constant $C_{\Gamma}$ only depending on the interface. Furthermore, we can control the $L^2$-norm in the $\delta$-strip by the lateral width of the strip~\cite{2012HiptmairLiZou,2010LiMelenkWohlmuthZou}, and thus obtain an order $\mathcal{O}(h)$ convergence when $\delta = \mathcal{O}(h^2)$.
\begin{lemma}[\revision{The proof in Lemma 3.4 and Lemma 2.1 in Ref.~\refcite{2010LiMelenkWohlmuthZou}}]
\label{lem_delta}
It \revision{holds true for} any $z\in H^1(\Omega^{\pm})$ that
\begin{equation}
\label{lem_delta_eq0}
\| z \|_{L^2(S^{\pm}_{\delta})} \le C \sqrt{\delta} \| z \|_{H^1(\Omega^{\pm})}.
\end{equation}
\end{lemma}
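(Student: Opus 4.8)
The plan is to reduce this two-dimensional bound to a family of one-dimensional estimates taken along the normal direction of $\Gamma$, combined with the trace theorem on $\Gamma$. I treat the ``$+$'' side; the ``$-$'' side is identical. Because $\Gamma$ is a smooth closed curve, for $\delta$ below a threshold depending only on the curvature of $\Gamma$ the strip $S^+_\delta$ admits the tubular (Fermi) change of variables $x=\Phi(s,t):=\gamma(s)+t\,\bs\nu(s)$, where $\gamma$ is the arc-length parametrization of $\Gamma$, $\bs\nu$ is the unit normal pointing into $\Omega^+$, $s\in[0,L]$ with $L=|\Gamma|$, and $t\in(0,\delta)$. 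The Jacobian $J(s,t)$ of $\Phi$ satisfies $c\le J(s,t)\le C$ with constants depending only on $\Gamma$, so that $\dd x$ and $\dd s\,\dd t$ are comparable on $S^+_\delta$. Setting up this parametrization and controlling $J$ uniformly in $\delta$ is the main technical obstacle; it is exactly where the smoothness of $\Gamma$ and the smallness of $\delta$ enter.

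Writing $\tilde z(s,t):=z(\Phi(s,t))$, note that $z\in H^1(\Omega^+)$ possesses a well-defined trace on $\Gamma\subseteq\partial\Omega^+$, so $\tilde z(s,0)$ is meaningful. For fixed $s$ the fundamental theorem of calculus $\tilde z(s,t)=\tilde z(s,0)+\int_0^t\partial_\tau\tilde z(s,\tau)\dd\tau$ together with the Cauchy--Schwarz inequality gives, for $t\in(0,\delta)$,
\[
|\tilde z(s,t)|^2 \le 2|\tilde z(s,0)|^2 + 2t\int_0^t |\partial_\tau \tilde z(s,\tau)|^2 \dd\tau .
\]
Integrating in $t$ over $(0,\delta)$, using $\int_0^\delta t\,\dd t=\delta^2/2$, and then integrating in $s$ over $(0,L)$ yields
\[
\int_0^L\!\!\int_0^\delta |\tilde z|^2 \dd t\,\dd s \le 2\delta\int_0^L |\tilde z(s,0)|^2\dd s + \delta^2\int_0^L\!\!\int_0^\delta |\partial_\tau \tilde z|^2 \dd\tau\,\dd s .
\]

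It then remains to identify the two terms on the right after transforming back to $x$. The boundary term $\int_0^L |\tilde z(s,0)|^2\dd s$ equals $\|z\|_{L^2(\Gamma)}^2$ up to the Jacobian at $t=0$, and the trace inequality on $\Omega^+$ bounds it by $C\|z\|_{H^1(\Omega^+)}^2$ with a constant independent of $\delta$. The second term is the $L^2(S^+_\delta)$-norm of the normal derivative $\partial_{\bs\nu}z$, hence is controlled by $\|\nabla z\|_{L^2(S^+_\delta)}^2\le\|z\|_{H^1(\Omega^+)}^2$. Using the comparability $\|z\|_{L^2(S^+_\delta)}^2\le C\int_0^L\!\int_0^\delta|\tilde z|^2\dd t\,\dd s$ and combining the two bounds gives $\|z\|_{L^2(S^+_\delta)}^2\le C(\delta+\delta^2)\|z\|_{H^1(\Omega^+)}^2\le C\delta\|z\|_{H^1(\Omega^+)}^2$ for $\delta$ bounded. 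Taking square roots completes the proof.
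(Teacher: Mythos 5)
Your proof is correct and follows essentially the same route as the paper, which does not reproduce an argument but defers to Lemma~2.1 of Li, Melenk, Wohlmuth and Zou (Ref.~\refcite{2010LiMelenkWohlmuthZou}); that proof is precisely the tubular-coordinate parametrization of $S^{\pm}_{\delta}$ combined with the fundamental theorem of calculus in the normal direction, Cauchy--Schwarz, and the trace inequality on $\Gamma$. No gaps.
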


Next, we introduce some Sobolev spaces used throughout this article. For each subdomain $\omega\subseteq\Omega$, we let $H^s(\omega)$ and $\bfH^s(\omega)$, $s\ge0$, be the standard scalar and 2D vector Hilbert spaces on $\omega$, respectively. Specifically, $H^0(\omega)=L^2(\omega)$, and $\bfH^0(\omega) =\bfL^2(\omega)$. In addition, for $s\ge 0$, we let
\begin{equation}
\label{Hcurl_spa}
  \bfH^s(\text{curl};\omega) = \{ \bfv\in\bfH^s(\omega)~:~\text{curl}~\bfv \in H^s(\omega) \}.
  \end{equation}
Similarly, we introduce $\bfH^s(\text{div};\omega)$ as the counterpart of $\bfH^s(\text{curl};\omega)$ with the divergence operator. If $\omega\cap\Gamma\neq\emptyset$, then $\omega^{\pm}=\omega\cap\Omega^{\pm}$, and $\bfH^s(\text{curl};\omega^-\cup\omega^+)$ denotes the space of piecewise-defined functions in $\bfH^s(\text{curl};\omega^{\pm})$. For these spaces, we can define subspaces $H^s_0(\omega)$, $\bfH^s_0(\omega)$ with zero traces on $\partial \omega$, \revision{and a zero tangential trace for $\bfH^s_0(\text{curl};\omega)$}. Additionally, let $(\cdot,\cdot)_{\omega}$ be the standard $L^2$ inner product on $\omega$.

When the interface is smooth, the solution to the problem is expected to have an $\bfH^1(\text{curl};\Omega^{\pm})$ regularity \cite{1999CostabelDaugeNicaise,Huang;Zou:2007Uniform}. The fundamental $\bfH^1(\text{curl};\Omega)$-extension operator, established by Hiptmair, Li and Zou in Theorem 3.4 and Corollary 3.5 in Ref.~\refcite{2012HiptmairLiZou}, will be used in analysis.
\begin{thm}[Theorem 3.4 and Corollary 3.5 in Ref.~\refcite{2012HiptmairLiZou}]
\label{thm_ext}
There exist two bounded linear operators
\begin{equation}
\label{thm_ext_eq0}
\bfE^{\pm}_{\emph{curl}} ~:~ \bfH^1(\emph{curl};\Omega^{\pm})\rightarrow \bfH^1(\emph{curl};\Omega)
\end{equation}
such that for each $\bfu\in\bfH^1(\emph{curl};\Omega^{\pm})$:
\begin{itemize}
  \item[1.]  $\bfE^{\pm}_{\emph{curl}}\bfu = \bfu ~~ \text{a.e.} ~\text{in} ~ \Omega^{\pm}$.
  \item[2.] $\| \bfE^{\pm}_{\emph{curl}}\bfu \|_{\bfH^1(\emph{curl};\Omega)} \le C_E \| \bfu \|_{\bfH^1(\emph{curl};\Omega^{\pm})}$ with the constant $C_E$ only depending on $\Omega$ and $\Gamma$.
\end{itemize}
\end{thm}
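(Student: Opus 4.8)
\emph{Proof proposal.} Since the statement is a two–sided extension result, I will construct $\bfE^{-}_{\curl}$ in detail and indicate the modification needed for $\bfE^{+}_{\curl}$. The naive strategy—applying a componentwise Stein extension to $\bfu\in\bfH^1(\Omega^-)$—fails for our purposes: such an extension lies in $\bfH^1(\Omega)$, so its curl is merely in $L^2(\Omega)$, with no control of $\|\curl\,(\cdot)\|_{H^1}$, because the extension is not engineered to keep the scalar curl smooth across $\Gamma$. The plan is therefore to decouple the curl from the gradient part of $\bfu$ by a regular (Helmholtz-type) decomposition, extend each piece separately, and recombine.

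First I would extend the scalar curl. Set $w:=\curl\,\bfu\in H^1(\Omega^-)$ and let $\tilde w\in H^1(\Omega)$ be a Stein extension of $w$, with $\|\tilde w\|_{H^1(\Omega)}\le C\|w\|_{H^1(\Omega^-)}$. To manufacture a vector field on all of $\Omega$ with this prescribed curl, I embed $\Omega$ in a smooth bounded domain $B$ (say a ball), further extend $\tilde w$ to $H^1(B)$, and solve the Poisson problem $-\Delta p=\tilde w$ in $B$ with $p\in H^1_0(B)$. Smoothness of $B$ gives $p\in H^2(B)$ with $\|p\|_{H^2(B)}\le C\|\tilde w\|_{L^2(B)}$, and since $\curl\,\underline{\curl}\,p=-\Delta p$, the field $\bfu_0:=\underline{\curl}\,p$ restricted to $\Omega$ obeys $\curl\,\bfu_0=\tilde w$ and $\bfu_0\in\bfH^1(\curl;\Omega)$ with a norm bounded by $\|w\|_{H^1(\Omega^-)}$.

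The remainder $\bfu-\bfu_0|_{\Omega^-}$ is then curl–free, $\curl(\bfu-\bfu_0)=w-\tilde w|_{\Omega^-}=0$, and lies in $\bfH^1(\Omega^-)$. Because $\Omega^-$ (the region enclosed by $\Gamma$) is simply connected, it is a gradient, $\bfu-\bfu_0=\nabla\varphi$ with $\varphi\in H^2(\Omega^-)$; normalizing $\varphi$ to zero mean and invoking a Poincar\'e inequality gives $\|\varphi\|_{H^2(\Omega^-)}\le C\|\bfu-\bfu_0\|_{\bfH^1(\Omega^-)}$. A scalar Stein extension produces $\tilde\varphi\in H^2(\Omega)$, and I would set
\begin{equation}
\bfE^-_{\curl}\bfu:=\bfu_0+\nabla\tilde\varphi .
\end{equation}
On $\Omega^-$ this equals $\bfu_0+\nabla\varphi=\bfu$, giving Property 1, and on $\Omega$ it lies in $\bfH^1(\curl;\Omega)$ since $\nabla\tilde\varphi\in\bfH^1(\Omega)$ has vanishing curl. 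Chaining the four bounded operators (the two Stein extensions, the Poisson solve, and the potential reconstruction) yields $\|\bfE^-_{\curl}\bfu\|_{\bfH^1(\curl;\Omega)}\le C_E\|\bfu\|_{\bfH^1(\curl;\Omega^-)}$ with $C_E$ depending only on $\Omega$ and $\Gamma$, which is Property 2; linearity is immediate since every building block is linear.

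The main obstacle is $\bfE^+_{\curl}$, because the exterior piece $\Omega^+$ (between $\Gamma$ and $\partial\Omega$) is \emph{not} simply connected, so a curl–free field there need not be a gradient. Here I would first subtract a smooth cohomology representative, i.e. a fixed $\bfz\in\bfH^1(\curl;\Omega)$ with $\curl\,\bfz=0$ whose circulation $\oint_{\Gamma}\bfz\cdot\bft$ is normalized, scaled by a constant $c$ so that $\bfu-\bfu_0-c\,\bfz$ has zero circulation around the single generator of $H^1_{\mathrm{dR}}(\Omega^+)$; the scalar $c$ is a bounded linear functional of $\bfu$. The corrected remainder is then exact on $\Omega^+$, reducing the problem to the gradient case above, and adding back $c\,\bfz$ (globally defined and harmless to the curl) completes the construction with the same uniform norm control.
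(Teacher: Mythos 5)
First, a point of comparison: the paper does not prove this statement at all --- it is imported verbatim from Hiptmair--Li--Zou (Theorem 3.4 and Corollary 3.5 of that reference), so there is no in-paper argument to measure you against. Your regular-decomposition strategy (extend the scalar curl through a vector potential, then extend the curl-free remainder through a scalar potential) is the natural route and is essentially how such extension theorems are established in the literature. The $\bfE^{-}_{\curl}$ half of your argument is correct as written: the Poisson solve on a ball gives $p\in H^2(B)$ with the right bound, $\curl\,\underline{\curl}\,p=-\Delta p$ is the correct 2D identity, the remainder is exact on the simply connected $\Omega^-$, and the $H^2$ Stein extension of the normalized potential closes the estimate with constants depending only on $\Omega$ and $\Gamma$.

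The genuine gap is in the $\bfE^{+}_{\curl}$ construction. You ask for a fixed $\bfz\in\bfH^1(\curl;\Omega)$ with $\curl\bfz=0$ on all of $\Omega$ and a normalized (hence nonzero) circulation $\oint_\Gamma\bfz\cdot\bft\dd s$. No such field exists when $\Omega$ is simply connected, which is the setting here (e.g.\ the square domain of the numerical section): by Stokes' theorem on $\Omega^-$, $\oint_\Gamma\bfz\cdot\bft\dd s=\int_{\Omega^-}\curl\bfz\dd x=0$ for every globally curl-free $\bfz$, so the generator of $H^1_{\mathrm{dR}}(\Omega^+)$ has no globally curl-free representative. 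The repair is standard: take $\bfz$ smooth on $\overline{\Omega}$ with $\curl\bfz$ supported in a compact subset of $\Omega^-$ and $\int_{\Omega^-}\curl\bfz\dd x=1$ (e.g.\ a cutoff of the angular field $\tfrac{1}{2\pi}(-x_2,x_1)^{\intercal}/|x|^2$ centered at an interior point of $\Omega^-$). Such a $\bfz$ is curl-free on $\Omega^+$ and in a neighborhood of $\Gamma$, has unit circulation around $\Gamma$, and lies in $\bfH^1(\curl;\Omega)$. Subtracting $c\bfz$ with $c=\oint_\Gamma(\bfu-\bfu_0)\cdot\bft\dd s$ (a bounded linear functional of $\bfu$ via the trace on $\Gamma$) makes the remainder exact on $\Omega^+$, and adding $c\bfz$ back perturbs the curl only by a fixed smooth function, so the $\bfH^1(\curl;\Omega)$ bound survives. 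With this correction your construction is complete.
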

Using these two special extension operators, we can define $\bfu^{\pm}_E := \bfE^{\pm}_{\text{curl}}\bfu^{\pm}$ which are the keys in the analysis later. Finally, throughout this article, for simplicity we shall use $\lesssim$ to denote a $\cdots\le C\cdots$ with a generic constant independent of the mesh size. the interface location relative to the background mesh, and the anisotropy of the element shape.

\section{Virtual Element Spaces}
\label{sec:VEM}

In this section, we shall introduce a virtual element space using the lowest order N\'ed\'elec element on a virtual triangulation which is obtained by refinement of the background mesh. On a polygon $P$, recall the solenoidal vector-valued polynomial that is used to define the first family simplicial N\'ed\'elec elements of the lowest order \cite{1980Nedelec} as follows:
\begin{equation}
\label{ned_spa}
    \mathcal{ND}_h(P) = \{ \bfa + b(x_2,-x_1)^{\intercal}: ~ \bfa\in\mathbb{R}^2, b\in\mathbb{R} \}.
\end{equation}

In the proposed method, triangular elements and quadrilateral elements in $\mathcal{T}_h$ are treated differently. To avoid confusion, in this section we shall usually use $K^t$ and $K^q$ to denote triangular and quadrilateral elements in $\mathcal{T}^t_h$ and $\mathcal{T}^q_h$, respectively, while $K$ denotes an interface element in the background mesh $\mathcal{T}^B_h$ or general elements in $\mathcal{T}_h$ if there is no need to distinguish their shape. In addition, for simplicity's sake, we always use $h_K$ as the diameter of elements $K$, $K^q$ and $K^t$. On a triangular element $K$, which can be either a non-interface element $K \in \mathcal{T}^B_h$ (or $\mathcal{T}_h$), or a triangular interface element $K^t$, \eqref{ned_spa} will be used directly to define the local space. Nevertheless, for $K^q$, we need to employ a virtual element space as follows. %

\subsection{Virtual edge element spaces}
Let us first discuss the definition on a general polygon $P$:
\begin{align}
\label{virtual_space_1}
\tilde{V}_h(P) = \{ \bfv_h \in \; &\bfH(\curl ;P)\cap\bfH(\text{div};P):  ~\nonumber \bfv_h\cdot\bft_e \in \mathbb{P}_0(e), ~\forall e\subset\partial P,\\
 &\text{div}(\bfv_h) = 0, ~ \curl (\bfv_h) \in \mathbb{P}_0(P) \}.
\end{align}
This is exactly the lowest degree variant of the local spaces introduced in Ref.~\refcite{2016VeigaBrezziMarini,2017VeigaBrezziDassiMarini}. Moreover, we can see $\mathcal{ND}_h(P) \subset \tilde{V}_h(P)$, \revision{where $\mathcal{ND}_h(P)$ is seen as a polynomial space not a local simplicial finite element space}. Similar to the ones for~\eqref{ned_spa} defined on triangles, the local degrees of freedom (DoFs) for~\eqref{virtual_space_1} are
\begin{equation}\label{vemdof}
\bfv_h\big|_{e}\cdot\bft_e, \quad e\subset\partial P.
\end{equation}
Any function in~\eqref{virtual_space_1} can be uniquely determined by DoFs \eqref{vemdof}, e.g., in Ref.~\refcite{2016VeigaBrezziMarini,2017VeigaBrezziDassiMarini}; see also  a rotated version with a curl-free constraint in Ref.~\refcite{Cao2021quadtree}. For the present situation, $\tilde{V}_h(K^q)$ ($P=K^q$) is the discretization space on $K^q$.

As the shape of elements $K^q$ could be very anisotropic, a robust norm equivalence and interpolation error estimate is hard to establish in $\tilde{V}_h(K^q)$.
To address this issue, we shall introduce an auxiliary triangulation of $\Omega$ (a virtual mesh), and construct an auxiliary $\bfH(\curl )$-conforming space associated with this mesh. Given an interface element $K\in \mathcal{T}^{Bi}_h$, with a quadrilateral subelement $K^q\in \mathcal{T}^q_h$, then the local auxiliary mesh is formed by a Delaunay triangulation of $K^{q}$: connecting the diagonal such that the sum of angles opposing to the diagonal is less than or equal to $\pi$; see the right plot in Figure \ref{fig:interf_elem_cut} for an illustration. Although it may contain anisotropic triangles, each triangle from this new partition satisfies the maximum angle condition (Lemma \ref{max_angle}), which is key to robust interpolation error estimates (see Section \ref{sec:proj-interp}). A similar result is proven in Ref.~\refcite{Chen;Wei;Wen:2017interface-fitted} for Cartesian grids.

\begin{lemma}
\label{max_angle}
Let $K$ be a shape regular triangle, i.e., there exist $0<\theta_{\min}\leq \theta_{\max}<\pi$ such that every angle $\theta$ in $K$ satisfies $\theta_{\min} \leq \theta \leq \theta_{\max}$, then every triangle in the auxiliary Delaunay triangulation on $K$ described above satisfies the maximum angle condition, i.e, every $\tilde{\theta}$ in the auxiliary triangulation is bounded above by $\tilde{\theta}_{\max}\leq \max\{\pi - \theta_{\min}, \theta_{\max}\}$.
\end{lemma}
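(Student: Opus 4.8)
The plan is to reduce everything to elementary angle chasing on three explicit triangles, invoking the Delaunay rule exactly once to tame the single obtuse angle that shape-regularity alone cannot control. First I would fix notation. Since the two edges of $K$ cut by $\Gamma^K_h$ necessarily share a vertex, I relabel the vertices of $K$ as $A,B,C$, with interior angles $\alpha,\beta,\gamma\in[\theta_{\min},\theta_{\max}]$, so that the cut points are $P\in AB$ and $Q\in AC$. The triangular subelement is then $K^t=\triangle APQ$ and the quadrilateral is $K^q=PBCQ$, whose sides are $PB$, $BC$, $CQ$ and the interface edge $QP$. Writing $p=\angle APQ$ and $q=\angle AQP$ (so that $p+q=\pi-\alpha$ with $p,q>0$), the four interior angles of $K^q$ are $\pi-p$ at $P$, $\beta$ at $B$, $\gamma$ at $C$, and $\pi-q$ at $Q$. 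Since each of these is strictly below $\pi$, the quadrilateral $K^q$ is convex and both of its diagonals lie inside it, so the Delaunay construction is well defined.

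The guiding observation is that the only angles in the whole configuration that can approach $\pi$ are $\pi-p$ and $\pi-q$ at $P$ and $Q$; every other angle is pinned down by the shape-regularity of $K$. Concretely, I would use the following principle repeatedly: if a triangle of the auxiliary mesh contains one of the original vertex angles $\alpha,\beta,\gamma$ in full, then, that angle lying in $[\theta_{\min},\theta_{\max}]$, the remaining two angles sum to at most $\pi-\theta_{\min}$ and are hence each strictly below $\pi-\theta_{\min}$, so all three angles of that triangle are bounded by $\max\{\theta_{\max},\pi-\theta_{\min}\}$. This disposes of $K^t=\triangle APQ$ immediately through its angle $\alpha$ at $A$.

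It remains to triangulate $K^q$. Its two diagonals are $PC$ and $BQ$; the associated sums of the pair of angles opposite the diagonal are $\beta+(\pi-q)$ for $PC$ and $(\pi-p)+\gamma$ for $BQ$, and these add up to $\alpha+\beta+\gamma+\pi=2\pi$. Hence at least one of them is $\le\pi$, and the Delaunay criterion selects the corresponding diagonal; by the $B\leftrightarrow C$ symmetry I may assume it is $PC$, i.e. $\beta+(\pi-q)\le\pi$, equivalently $\beta\le q$. This diagonal splits $K^q$ into $\triangle PBC$ and $\triangle PCQ$, and the first of these contains the full angle $\beta$ at $B$, so it is controlled by the principle above.

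The genuinely critical piece, and the step I expect to be the main obstacle, is $\triangle PCQ$: it is the one triangle that inherits no full vertex angle of $K$, and it carries the obtuse angle at $Q$. Here the angle at $Q$ equals the full quadrilateral angle $\pi-q$ (the triangle's edges $QP$ and $QC$ coincide with the two edges of $K^q$ meeting at $Q$), and bounding it is exactly what the Delaunay choice delivers: from $\beta\le q$ we get $\pi-q\le\pi-\beta\le\pi-\theta_{\min}$. The remaining two angles at $P$ and $C$ then sum to $q<\pi-\alpha\le\pi-\theta_{\min}$, so each is strictly below $\pi-\theta_{\min}$ as well. Thus all three angles of $\triangle PCQ$ obey the required bound, and assembling the three triangles $K^t$, $\triangle PBC$, $\triangle PCQ$ completes the proof. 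The only delicate point is this final use of the Delaunay inequality; everything else is the bookkeeping of complementary and summed angles, together with the convexity that legitimizes the diagonal split.
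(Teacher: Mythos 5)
Your proof is correct and follows essentially the same route as the paper's: the same three-triangle decomposition, the same ``full original angle'' principle for the two easy triangles, and the same single application of the Delaunay criterion to bound the one potentially obtuse angle (your $\pi-q\le\pi-\beta$ is precisely the paper's $\angle DEA_2\le\pi-\angle DA_1A_2$). The only differences are cosmetic --- you additionally verify convexity and the existence of a Delaunay diagonal, and you bound the remaining two angles of the critical triangle via their sum $q<\pi-\alpha$ rather than individually as the paper does.
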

\begin{proof}
Without loss of generality, we consider the triangle in the right plot of Figure \ref{fig:interf_elem_cut} for illustration where the left and right cutting points are $D$ and $E$, respectively. 
We shall bound the angles of three triangles: $K^t = \Delta DEA_3, K_1^q = \Delta A_1A_2D,$ and $K_2^q = \Delta DA_2E$. If the angle is one of (or part of) the angles of $\Delta A_1A_2A_3$, then it is bounded by $\theta_{\max}$. If the triangle contains one of the angles of $\Delta A_1A_2A_3$, as the sum of three angles is $\pi$, we conclude other angles are bounded by $\pi - \theta_{\min}$. Therefore, only the angles of the triangle $\Delta DA_2E$ need our attention.

Upon using the Delaunay property, $\angle DEA_2 +  \angle DA_1A_2 \leq \pi$, we get $\angle DEA_2 \leq \pi - \theta_{\min}$ and $\angle EDA_2\le \angle A_3DA_2 \le \pi - \angle A_1A_3A_2 \leq \pi - \theta_{\min}$. Thus, we have verified $\tilde{\theta}_{\max}\leq \max\{\pi - \theta_{\min}, \theta_{\max}\}$.

\end{proof}

The discussion for the special case on a quadrilateral $K^q$ is postponed to Section \ref{sec:IFE_disre}, and the results on a general polygon $P$ are the focus in the rest of this section. To be able to establish a robust analysis of the approximation capabilities of $V_h(P)$ below and the stability of the discretization, we introduce the following assumptions on the polygon $P$:
\begin{itemize}
\item[\revision{(P0)}] \label{asp:polygon} 
\revision{$P$ is assumed to admit a triangulation $\mathcal T_h(P)$ with no additional interior vertices added, i.e., the collection of edges $\mathcal{E}_h(P)$ in $\mathcal{T}_h(P)$ are solely formed by the vertices on $\partial P$;}
 \item[(P1)] the maximum angle condition holds uniformly for triangles in $\mathcal T_h(P)$;
 \item[(P2)] \label{asp:NSIE} no-short-interior-edge condition: $h_P\lesssim |e|$ for every interior edge $e$;
 \item[(P3)] star-convexity: there exists $\bfx:=(\bar{x}_1, \bar{x}_2) \in P$ such that $\overline{\bfx \bfy}\subseteq P$, $\forall \bfy \in \partial P$.
\end{itemize}
 We then define an auxiliary VEM space
\begin{equation}
\label{auxi_v_space_1_polygon}
\begin{aligned}
V_h(P) = \{ \bfv_h \in \bfH(\curl ;P): ~ & \bfv_h|_{K} \in \mathcal{ND}_h(K), \forall K\in \mathcal T_h(P), \\
& \curl \bfv_h \in \mathbb{P}_0(P)\}.
\end{aligned}
\end{equation}
Indeed we can show the VEM space defined by~\eqref{auxi_v_space_1_polygon} shares the same DoFs of~\eqref{vemdof}.
\begin{lemma}
\label{lem_iso_mor}
Let $P$ be a simple polygon \revision{satisfying assumption (P0)}, then the DoFs $\{\bfv_h\cdot\bft_e, e\subset\partial P\}$ are unisolvent on the space $V_h(P)$.
\end{lemma}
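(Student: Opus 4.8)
The plan is to prove unisolvence by showing that the tangential-trace map
$L:V_h(P)\to\mathbb{R}^{n}$, $L(\bfv_h)=(\bfv_h\cdot\bft_e)_{e\subset\partial P}$, is a linear isomorphism, where $n$ denotes the number of edges of $P$ (equivalently the number of boundary DoFs). I would do this in two independent steps: first establish that $L$ is injective, and then show the dimension identity $\dim V_h(P)=n$; an injective linear map between spaces of equal finite dimension is automatically bijective, which is exactly the asserted unisolvence.

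For injectivity, suppose $\bfv_h\in V_h(P)$ satisfies $\bfv_h\cdot\bft_e=0$ on every $e\subset\partial P$, so $\bfv_h\in\bfH_0(\curl;P)$. By the definition of $V_h(P)$ the quantity $\curl\bfv_h$ is a single constant $c$ on all of $P$, so Stokes' theorem gives $c\,|P|=\int_P\curl\bfv_h\dd x=\oint_{\partial P}\bfv_h\cdot\bft\dd s=0$, whence $c=0$ and $\bfv_h$ is curl-free. On the simply connected polygon $P$ the lowest-order discrete de Rham sequence identifies the curl-free functions of $V_h(P)$ with gradients of continuous piecewise-linear scalars on $\mathcal{T}_h(P)$: there is a continuous $\phi_h$, affine on each triangle, with $\bfv_h=\nabla\phi_h$. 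The vanishing tangential trace means $\partial_t\phi_h=0$ along the connected curve $\partial P$, so $\phi_h$ is constant there and may be normalized to vanish on $\partial P$. Here assumption (P0) is decisive: since $\mathcal{T}_h(P)$ has no interior vertices, every nodal point lies on $\partial P$, so $\phi_h$ vanishes at all vertices and hence $\phi_h\equiv 0$, giving $\bfv_h=\nabla\phi_h=0$.

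For the dimension count I would invoke the same complex. The kernel of $\curl$ on $V_h(P)$ consists precisely of gradients of continuous piecewise-linear functions, of dimension (number of vertices)$-1=n-1$ under (P0). The operator $\curl$ maps $V_h(P)$ onto the one-dimensional space $\mathbb{P}_0(P)$ of global constants: the constant curl is attained, for instance by the global field $(x_2,-x_1)^{\intercal}$, which lies in $\mathcal{ND}_h(K)$ for every $K\in\mathcal{T}_h(P)$ and has constant curl equal to $-2$. The rank-nullity theorem then yields $\dim V_h(P)=(n-1)+1=n$. As a cross-check, Euler's formula for a triangulation of an $n$-gon with no interior vertices gives $n-2$ triangles and $2n-3$ edges, so the full $\bfH(\curl)$-conforming N\'ed\'elec space has dimension $2n-3$; collapsing its $n-2$ piecewise-constant curl values to a single constant imposes $n-3$ independent constraints and leaves $2n-3-(n-3)=n$, in agreement. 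Since $L$ is injective and $\dim V_h(P)=n$ equals the number of boundary DoFs, $L$ is an isomorphism.

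The step I expect to demand the most care is injectivity, specifically the passage from a curl-free $\bfv_h$ to a single-valued continuous potential $\phi_h$ together with the explicit use of (P0). The global (rather than merely piecewise) curl constraint is exactly what lets the Stokes argument force $c=0$, and the absence of interior vertices is precisely what forbids a nonzero piecewise-linear bubble vanishing on $\partial P$. Indeed, repeating the dimension count with $V_i$ interior vertices would give $\dim V_h(P)=n+V_i$, which exceeds the number of boundary DoFs unless $V_i=0$; thus (P0) is not merely convenient but necessary for the DoFs \eqref{vemdof} to be unisolvent on $V_h(P)$.
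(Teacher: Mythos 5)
Your proof is correct, but it takes a genuinely different route from the paper's. You argue directly on the trace map $L:V_h(P)\to\mathbb{R}^{n}$: injectivity via Stokes (the single-constant curl must vanish when all boundary DoFs do), then exactness of the discrete de Rham complex on the simply connected $P$ to write the curl-free remainder as $\nabla\phi_h$ and kill it using (P0), and finally a rank--nullity count $\dim V_h(P)=(n-1)+1=n$ (your Euler-formula cross-check $2n-3-(n-3)=n$ is also right). The paper instead characterizes $V_h(P)$ as the solution space of a discrete saddle-point problem \eqref{harmon_ext_eq_1} --- a discrete harmonic extension of the boundary DoFs --- observes that (P0) trivializes the Lagrange multiplier space $S_{h,0}(\mathcal{T}_h(P))$ so the system collapses to \eqref{eq:curlcrul}, shows via integration by parts that this forces a single constant curl, and then reads off unisolvence from well-posedness of that boundary value problem. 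Both arguments hinge on the same two facts (exactness of the lowest-order complex and the absence of interior vertices), but they package them differently: the paper's route simultaneously delivers the ``discrete harmonic extension'' interpretation exploited in Remark \ref{harmon_ext}, at the cost of leaning on the (asserted) well-posedness of the reduced system; your route is more elementary and self-contained linear algebra, and has the added virtue of making the role of (P0) sharp --- your observation that interior vertices would inflate $\dim V_h(P)$ to $n+V_i$ shows (P0) is necessary for unisolvence, not merely convenient, which the paper does not state.
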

\begin{proof}
Let us consider the following problem: for any given boundary conditions $\bfv_h\cdot\bft_e$, $e\subset\partial P$, find $(\bfv_h,\lambda_h)\in \mathcal{ND}_{h}(\mathcal{T}_h(P))\times S_{h,0}(\mathcal{T}_h(P))$ satisfying
\begin{equation}
\label{harmon_ext_eq_1}
\left \{
\begin{aligned}
     (\curl \bs v_h, \curl \bfw_h )_P + (\bfw_h ,\nabla \lambda_h)_P & = 0, ~~~~ \forall \bfw_h \in \mathcal{ND}_{h,0}(\mathcal{T}_h(P)),
 \\
     (\bfv_h,\nabla p_h)_P & =0, ~~~~ \forall p_h \in S_{h,0}(\mathcal{T}_h(P)).
\end{aligned}
\right.
\end{equation}
In this problem, $S_{h,0}(\mathcal{T}_h(P))$ is the piecewise linear Lagrange finite element space with zero trace on $\partial P$. \revision{Note that this is a well-posed problem as the shape functions associated with the boundary DoFs can be moved to the right-hand side and, thus, it reduces to a system only about internal unknowns.} Now, $S_{h,0}(\mathcal{T}_h(P))$ is a trivial space, since there is no internal vertex. As such, \eqref{harmon_ext_eq_1} reduces to
\begin{equation}\label{eq:curlcrul}
(\curl \bs v_h, \curl \bfw_h )_P = 0, ~~~~ \forall \bfw_h \in \mathcal{ND}_{h,0}(\mathcal{T}_h(P)).
\end{equation}
Denoting the jump on an edge $e$ similar to those in \eqref{inter_jc_1}--\eqref{inter_jc_2}, the fact that $\curl \bfv_h$ is a piecewise constant on $\mathcal{T}_h (P)$ and an integration by parts show
\begin{equation}
\label{eq:curlcrul_2}
\sum_{e\in\mathcal{E}_h(P)} [\curl  \bfv_h]_e \int_e \bfw_h\cdot \bft_e \dd s =0, ~~~~ \forall \bfw_h \in \mathcal{ND}_{h,0}(\mathcal{T}_h(P)).
\end{equation}
Therefore, $\curl \bfv_h$ must be a single constant on all elements in $\mathcal{T}_h(P)$. Namely, the solution space of \eqref{harmon_ext_eq_1} is $V_h(P)$ in \eqref{auxi_v_space_1_polygon}. The unisolvence follows naturally from the homogeneous boundary condition yielding the zero solution.
\end{proof}

\begin{rem}
\label{harmon_ext}
Lemma \ref{lem_iso_mor} basically shows that $\bs v_h\in V_h(P)$ satisfies $\div_h \bs v_h = 0$ together with \eqref{eq:curlcrul}, where $\div_h$ is \revision{the weak divergence operator defined as the $L^2$-adjoint of $\nabla \!: S_{h,0}(\mathcal{T}_h(P)) \to \mathcal{ND}_{h,0}(\mathcal{T}_h(P))$}.
Namely, the functions in the VEM space~\eqref{auxi_v_space_1} \revision{can be seen as discrete harmonic extensions} of the boundary conditions $\bfv_h\cdot\bft$ on $\partial P$. Meanwhile, the functions in the original virtual space~\eqref{virtual_space_1} \revision{are continuous extensions with constraint $\div \bs v_h = 0$. Therefore, functions in $\tilde V_h(P)$ may not be polynomials, while the functions in $V_h(P)$ are piecewise vector-valued polynomials}, for which the error estimates are relatively easy to establish.
\end{rem}

By Lemma \ref{max_angle}, it is clear that $K^q=K^q_1\cup K^q_2$ induces such a triangulation satisfying \hyperref[asp:polygon]{(P0)--(P3)}. Meanwhile, we would like to remark that the aforementioned setting and the forthcoming analysis in this paper can be easily extended to the case when $\mathcal{T}^B_h$ is a uniform Cartesian grid, on which the interface elements consist either trapezoids, or triangle-pentagon satisfying \hyperref[asp:polygon]{(P0)--(P3)}.
In the subsequent analysis involving $K^q$, the space ${V}_h(K^q)$ ($P=K^q$) is replacing
$\tilde{V}_h(K^q)$
\begin{equation}
\label{auxi_v_space_1}
\begin{aligned}
V_h(K^q) = \{ \bfv_h \in \bfH(\curl ;K^q): ~ & \bfv_h|_{K^q_i} \in \mathcal{ND}_h(K^q_i), ~ i=1,2, \\
& \curl (\bfv_h) \in \mathbb{P}_0(K^q)\}.
\end{aligned}
\end{equation}
All these triangles on interface elements form a triangulation resolving the interface,
and the global $\bfH(\curl )$-conforming space is defined as
\begin{equation}
\begin{split}
\label{auxi_v_space_3}
V_h = \{ \bfv_h \in \bfH_0(\curl ;\Omega) :~ &\bfv_h \in \mathcal{ND}_h(K) ~ \text{on} ~K \notin \mathcal{T}^q_h,\\
& \text{and} ~ \bfv_h \in V_h(K) ~ \text{on} ~ K\in\mathcal{T}^q_h \}.
\end{split}
\end{equation}
As we assume the background mesh $\mathcal T_h^{B}$ is shape regular, the maximum angle condition holds uniformly for the auxiliary mesh $\mathcal T_h$ by Lemma \ref{max_angle}.

\subsection{Projection and interpolation operators}
\label{sec:proj-interp}
For a general polygon $P$ and $\bfv_h \in V_h(P)$, the constant $\curl \bs v_h|_P$ can be computed by
\begin{equation}
\label{curl_vt}
\curl \bfv_h = \frac{1}{|P|}\int_{P} \curl \bfv_h \dd x = \frac{1}{|P|}\int_{\partial P} \bfv_h\cdot\bft \dd s.
\end{equation}
Thus, with $\bs v_h\cdot \bs t$ known as DoFs, and $\curl \bs v_h$ obtained as above, an $L^2$-projection of $\bs v_h$ can be computed
\cite{2016VeigaBrezziMarini,2017VeigaBrezziDassiMarini}.
On any elements or edges $\omega\subseteq \Omega$ we define the local $L^2$ projection $\Pi_{\omega}: L^2(\omega) \rightarrow [\mathbb{P}_0(\omega)]^2$ such that
\begin{equation}
\label{l2projection}
(\Pi_{\omega} \bfv_h, \bfp )_{\omega} = ( \bfv_h, \bfp )_{\omega}, ~~~ \forall \bfp \in  [\mathbb{P_0}(\omega)]^2
\end{equation}
which is indeed computable according to the DoFs of~\eqref{virtual_space_1} (see Remark 3 in Ref.~\refcite{2017VeigaBrezziDassiMarini}). For readers' sake, we recall the procedure here: for each $\bfp = (p_1, p_2)^{\intercal}\in [\mathbb{P}_0(P)]^2$, there exists $\phi_h= -p_2 (x_1 - \bar x_1) + p_1 (x_2 -\bar x_2) \in \mathbb{P}_1(P)$, such that $\underline{\curl}\, \,\phi_h = \bfp$, where $(\bar{x}_1, \bar{x}_2)$ is the point in the star-convexity assumption \hyperref[asp:polygon]{(P3)},
Therefore
\begin{equation}
\begin{split}
\label{compute_proj}
(\Pi_{K^q} \bfv_h, \bfp )_{P} & = (\bfv_h, \bfp )_{P} =  ( \bfv_h,  \underline{\curl}\,  \phi_h )_{P} \\
& = (\curl  \bfv_h, \phi_h)_{P} - (\bfv_h\cdot\bft, \phi_h)_{\partial P}.
\end{split}
\end{equation}
As $\bs v_h\cdot \bs t_e$ is given as a DoF on each $e$, and $\curl \bs v_h$ is constant, we get
\begin{equation}
\label{proj_formula}
\Pi_{P} \bfv_h =|P|^{-1} \big((\bfv_h\cdot\bft, \bar{x}_2 - x_2)_{\partial P},
-(\bfv_h\cdot\bft, \bar{x}_1 - x_1)_{\partial P} \big)^{\intercal},
\end{equation}
in which the integration on $\partial P$ is with respect to $\dd s(x_1,x_2)$.

Due to the DoFs being imposed on edges, we can define the interpolation
\begin{equation}
\label{interp_2}
I_{P}: \bfH^1(\curl ;P)\rightarrow V_h(P),\quad \int_{e} I_{P}\bfu\cdot\bft \dd s = \int_e \bfu\cdot\bft \dd s, \quad  \forall e \subset \partial P.
\end{equation}
We note that if $P$ is a triangle, $I_P$ reduces exactly to the usual edge interpolation operator, and the special one is for other general polygons such as quadrilateral elements $K^q$ where shape functions are from the virtual space $V_h(P)$ in \eqref{auxi_v_space_1}. Using integration by parts,  we get
$$
\int_{P}\curl I_P \bs u \dd x = \int_{P}\curl \bs u \dd x.
$$
Namely, $\curl I_P \bs u$ is the $L^2$-projection of $\curl \bs u$ to the space of constants, \revision{in accordance with the commutative diagram between the continuous and discrete de Rham complexes thanks to \eqref{interp_2}}.

Moreover, the interpolation $I_{P}:\tilde{V}_h(P)\rightarrow V_h(P)$ serves as a bijective mapping which also preserves curl values. The $L^2$ projectors in $\left[ \mathbb{P}_0(P) \right]^2$ are also the same, as $\tilde{V}_h(P)$ and $V_h(P)$ share the same DoFs. For the considered mesh $\mathcal{T}_h$, taking $P=K\in\mathcal{T}_h$, we have $\tilde{V}_h$ and $V_h$ lead to the same numerical scheme, yet the analysis based on $V_h$ can exploit more existing tools built for simplicial finite elements.

Finally, a global interpolant $\bfu_I$ is formed by gluing these local interpolations together, for which certain modification must be introduced on the interface edges forming $\Gamma_h$ (see Section \ref{subsec:interp_def}).

In the rest of this section, we present some estimates which show the convenience in analysis of opting for the space $V_h(P)$. For a triangle with vertices $\bs a_i$, let $\theta_i$ be the angle at vertex $\bs a_i$ and $e_i$ be the edge opposite to $\bs a_i$, for $i=1,2,3$.

\begin{lemma}\label{lem_norm_equiv1_eq0}
The following identity \revision{holds true} for any linear $\phi_h$ on a triangle $T$:
\begin{equation}
\| \nabla \phi_h \|_{L^2(T)}^2 = R_T\sum_{i=1}^3\cos \theta_i \| \nabla \phi_h \cdot \bs t_i\|_{L^2(e_i)}^2,
\end{equation}
where $R_T$ is the circumradius of $T$ and $\bs t_i$ is a unit tangential vector of $e_i$.
\end{lemma}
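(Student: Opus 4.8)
The plan is to exploit that $\phi_h$ is affine, so that $\bs g := \nabla\phi_h$ is a \emph{constant} vector on $T$. Writing $|T|$ for the area of $T$ and $|e_i|$ for the length of $e_i$, the two sides collapse to $\|\nabla\phi_h\|_{L^2(T)}^2 = |\bs g|^2\,|T|$ and $\|\nabla\phi_h\cdot\bs t_i\|_{L^2(e_i)}^2 = (\bs g\cdot\bs t_i)^2\,|e_i|$. Since $e_i$ is the edge opposite the angle $\theta_i$, the law of sines gives $|e_i| = 2R_T\sin\theta_i$, whence the weight simplifies to $R_T\cos\theta_i\,|e_i| = \tfrac12|e_i|^2\cot\theta_i$. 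Introducing the oriented edge vectors $\bs E_i := |e_i|\bs t_i$, the target identity is therefore equivalent to
\begin{equation}
|\bs g|^2\,|T| = \frac12\sum_{i=1}^3\cot\theta_i\,(\bs g\cdot\bs E_i)^2. \tag{$\star$}
\end{equation}
Because $\bs E_i$ runs along the edge joining the two vertices other than $\bs a_i$, one has $\bs g\cdot\bs E_i = \pm\big(\phi_h(\bs a_j)-\phi_h(\bs a_k)\big)$ with $\{i,j,k\}=\{1,2,3\}$; the squaring renders both the orientation of $\bs t_i$ and this sign immaterial, and $(\star)$ is recognised as the classical cotangent formula for the Dirichlet energy of a $P_1$ function.

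To establish $(\star)$ I would expand $\phi_h = \sum_i \phi_h(\bs a_i)\lambda_i$ in barycentric coordinates, so that $\bs g = \sum_i \phi_h(\bs a_i)\nabla\lambda_i$ and $|\bs g|^2|T| = |T|\sum_{i,j}\phi_h(\bs a_i)\phi_h(\bs a_j)\,\nabla\lambda_i\cdot\nabla\lambda_j$. The single computation carrying the geometry is the gradient identity $\nabla\lambda_i = \frac{|e_i|}{2|T|}\bs n_i$, with $\bs n_i$ the inward unit normal of $e_i$. The two edges meeting at $\bs a_i$ have inward normals subtending the angle $\pi-\theta_i$, so $\bs n_j\cdot\bs n_k = -\cos\theta_i$; combined with $2|T| = |e_j||e_k|\sin\theta_i$ this yields
\[
|T|\,\nabla\lambda_j\cdot\nabla\lambda_k = -\tfrac12\cot\theta_i,
\qquad
|T|\,|\nabla\lambda_i|^2 = \tfrac12(\cot\theta_j+\cot\theta_k),\qquad \{i,j,k\}=\{1,2,3\},
\]
the diagonal entry following from the off-diagonal ones and $\sum_i\nabla\lambda_i=0$. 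The matrix $\big(|T|\,\nabla\lambda_i\cdot\nabla\lambda_j\big)_{ij}$ is thus a graph Laplacian with edge weight $\tfrac12\cot\theta_i$ on $e_i$, and a Laplacian quadratic form equals $\sum_i \tfrac12\cot\theta_i\big(\phi_h(\bs a_j)-\phi_h(\bs a_k)\big)^2$, which is exactly the right-hand side of $(\star)$.

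The only delicate point is the geometric bookkeeping behind $|T|\,\nabla\lambda_j\cdot\nabla\lambda_k = -\tfrac12\cot\theta_i$: one must correctly pair each edge with the opposite vertex, identify the angle between the two inward normals as $\pi-\theta_i$, and keep the signs straight (including the obtuse case, where $\cot\theta_i<0$ yet the total stays equal to the nonnegative quantity $|\bs g|^2|T|$). Everything else is elementary trigonometry via the law of sines. Alternatively, $(\star)$ can be proved without barycentric coordinates by resolving $\bs g$ in the non-orthonormal basis of two edge vectors and using $\bs t_j\cdot\bs t_k = -\cos\theta_i$, but the barycentric route makes the emergence of the cotangent weights most transparent.
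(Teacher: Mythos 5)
Your argument is correct and follows essentially the same route as the paper: reduce the stated identity to the classical cotangent formula via the law of sines and the relation $|\nabla\phi_h\cdot\bs t_i|^2=(\phi_{i-1}-\phi_{i+1})^2/|e_i|^2$. The only difference is that you additionally prove the cotangent formula from barycentric coordinates, whereas the paper simply cites it (Duffin; Strang--Fix); your derivation of it is also correct.
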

\begin{proof}
Denote by $\phi_i := \phi_h(\bs a_i)$ for $i=1,2,3$. The cotangent formula \cite{Duffin1959Distributed,Strang;Fix:1973analysis} reads
$$
\| \nabla \phi_h \|_{L^2(T)}^2 = \frac{1}{2}\sum_{i=1}^3 \cot \theta_i ( \phi_{i-1} - \phi_{i+1})^2.
$$
Then the law of sines and $|\nabla \phi_h \cdot \bs t_i|^2 = (\phi_{i-1} - \phi_{i+1})^2/|e_i|^2$ imply the result.
\end{proof}

We now prove the following Poincar\'e-type inequality which is one of the keys for the analysis on anisotropic meshes.
\begin{lemma}\label{lem_Poincare}
Let $P$ be a simple polygon satisfying \hyperref[asp:polygon]{(P0)--(P3)}, then $\forall \bfv_h \in V_h(P)$,
\begin{equation}
\label{lem_norm_equiv2_eq0}
\| \bfv_h \|_{L^2(P)} \le  \left( \frac{ \cot(\theta_{\max}) C(N_P)  }{2\lambda} \right)^{1/2} (  h^{1/2}_{P} \|  \bfv_h\cdot\bft \|_{L^2(\partial P)} + h_{P} \| \emph{curl}\, \bfv_h \|_{L^2(P)}).
\end{equation}
where $\theta_{\max}$ is the maximum angle of the triangles in $\mathcal{T}_h(P)$, $\lambda = \min_e h_e/h_P$ for the interior edge $e$ in $\mathcal{T}_h(P)$, and $C(N_P)$ is an integer only depending on the number of vertices of $P$ denoted by $N_P$.
\end{lemma}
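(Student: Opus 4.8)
The plan is to first split $\bs v_h$ into a discrete gradient part and an explicit rotational part, and then reduce the estimate to a scalar Poincar\'e-type bound for the gradient part through the cotangent identity of Lemma~\ref{lem_norm_equiv1_eq0}. Since $\bs v_h|_K\in\mathcal{ND}_h(K)$ on each triangle $K\in\mathcal T_h(P)$ and $\curl\bs v_h\equiv c$ is a single constant on $P$ by Lemma~\ref{lem_iso_mor}, writing $\bs v_h|_K=\bs a_K+b_K(x_2,-x_1)^{\intercal}$ and using $\curl\bs v_h|_K=-2b_K=c$ forces $b_K=-c/2$ on every $K$. Hence, with the star-center $\bar{\bs x}$ from (P3) and $\bs r:=\tfrac12(\bar x_2-x_2,\,x_1-\bar x_1)^{\intercal}$, so that $\curl\bs r=1$, the field $\bs v_h-c\,\bs r$ is piecewise constant, tangentially continuous, and curl-free; on the simply connected $P$ it is therefore a continuous gradient, i.e. $\bs v_h=\nabla\phi_h+c\,\bs r$ for some continuous piecewise linear $\phi_h$. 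The rotational part is harmless, since $|\bs r|\le\tfrac12 h_P$ on $P$ gives $\|c\,\bs r\|_{L^2(P)}\le\tfrac12 h_P\,\|\curl\bs v_h\|_{L^2(P)}$, which already supplies the second term of \eqref{lem_norm_equiv2_eq0}. It then remains to bound $\|\nabla\phi_h\|_{L^2(P)}$.

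Summing the identity of Lemma~\ref{lem_norm_equiv1_eq0} over all triangles and reorganizing by edges, I obtain $\|\nabla\phi_h\|_{L^2(P)}^2=\sum_e w_e\,\|\nabla\phi_h\cdot\bs t_e\|_{L^2(e)}^2$, where the law of sines turns $R_T\cos\theta$ into $\tfrac{|e|}2\cot\theta$, so that $w_e=\tfrac{|e|}2\cot\theta_e$ on a boundary edge and $w_e=\tfrac{|e|}2(\cot\theta_e+\cot\theta_e')$ on an interior edge, with $\theta_e,\theta_e'$ the angles opposite $e$ in the adjacent triangles. Writing $\delta_e$ for the jump of the nodal values of $\phi_h$ across $e$, the tangential derivative is constant along $e$ with $\|\nabla\phi_h\cdot\bs t_e\|_{L^2(e)}^2=\delta_e^2/|e|$, so each edge contributes $\tfrac12\cot\theta_e\,\delta_e^2$, respectively $\tfrac12(\cot\theta_e+\cot\theta_e')\delta_e^2$. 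The maximum-angle condition (P1) bounds the circumradii and the cotangent weights uniformly, producing the factor $\cot\theta_{\max}$ and keeping everything independent of the aspect ratio of the possibly thin triangles.

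The boundary edges are straightforward: on $e\subset\partial P$ one has $\nabla\phi_h\cdot\bs t_e=\bs v_h\cdot\bs t_e-c\,\bs r\cdot\bs t_e$, and after absorbing the $c\,\bs r$ contribution into the curl term using $|e|\le h_P$, these edges contribute $\lesssim h_P\,\|\bs v_h\cdot\bs t\|_{L^2(\partial P)}^2$. The essential difficulty, and the main obstacle of the proof, is the interior edges, whose tangential data do not appear in the boundary norm on the right-hand side of \eqref{lem_norm_equiv2_eq0}. Here assumption (P0) is decisive: since $\mathcal T_h(P)$ has no interior vertices, every interior edge is a diagonal joining two vertices of $\partial P$, so its nodal jump $\delta_e$ equals a telescoping sum of at most $N_P$ boundary nodal jumps along one of the two boundary arcs, whence $\delta_e^2\le N_P\sum_{e'\subset\partial P}\delta_{e'}^2$ by Cauchy--Schwarz. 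Dividing by the interior edge length, controlled from below by $|e|\ge\lambda h_P$ through the no-short-interior-edge condition (P2), produces the factor $1/\lambda$, while summing over the at most $N_P-3$ diagonals yields the integer $C(N_P)$. Combining the boundary and interior contributions, converting the boundary nodal jumps back to $\|\bs v_h\cdot\bs t\|_{L^2(\partial P)}$ up to the already-accounted curl term, and taking square roots gives \eqref{lem_norm_equiv2_eq0}. The delicate point is precisely this interior-edge step, where (P0)--(P2) must cooperate to trade interior tangential data for boundary data without introducing any dependence on the element anisotropy.
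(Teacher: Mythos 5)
Your proof is correct and follows essentially the same route as the paper's: your rotational field $c\,\bs r$ is exactly the paper's auxiliary function $\bfw_h$, the reduction to a continuous piecewise linear potential $\phi_h$ and the cotangent identity of Lemma~\ref{lem_norm_equiv1_eq0} are used identically, and your telescoping of interior-edge nodal jumps along a boundary arc is the same identity the paper obtains from the zero circulation of $\nabla\phi_h$ around $\partial P_e$, with (P2) supplying the $1/\lambda$ factor in both cases. The only differences are presentational (you make the gradient representation explicit via $b_K=-c/2$ rather than citing the discrete exact sequence, and you organize the cotangent sum edgewise).
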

\begin{proof}
 Define an auxiliary function
$$
\bfw_h = \frac{ \curl  \bfv_h }{2} \left[\begin{array}{c} -(x_2-\bar x_2) \\ x_1 - \bar x_1 \end{array}\right],
$$
where $(\bar{x}_1, \bar{x}_2)$ is the point in the star-convexity condition in
\hyperref[asp:polygon]{(P3)}.
It is clearly that
\begin{align}
\label{lem_norm_equiv2_eq1}
\| \bfw_h \|_{L^2(P)} &\lesssim h_{P} \| \curl \bfv_h \|_{L^2(P)}.
\end{align}
In addition, for every edge $e\subset \partial P$, $(-(x_2-\bar x_2), x_1 - \bar x_1)^{\intercal}\cdot\bft|_e$ yields the height $l_e$ of $e$ in the triangle formed by $e$ and $(\bar{x}_1, \bar{x}_2)$,
thus we have $|\bfw_h\cdot\bft_e| = l_e |\curl  \bfv_h| /2$. Together with the star-convexity condition, we have
\begin{align}
\label{lem_norm_equiv2_eq2}
\| \bs w_h\cdot \bs t\|_{L^2(e)} & = \frac{h^{1/2}_el_e}{2} |\curl  \bfv_h|  \le h_{P}^{1/2} \| \curl \bs v_h\|_{L^2(P)}.
\end{align}
Furthermore, we note that $\curl (\bfw_h - \bfv_h) = 0$, then by a standard argument of the conforming exact sequence, there exists a continuous piecewise linear finite element function $\phi_h$ such that $\bfv_h - \bfw_h = \nabla \phi_h$. Applying Lemma \ref{lem_norm_equiv1_eq0}, %
we get the estimate
\begin{equation}\label{eq:firststep}
\| \bfv_h - \bfw_h \|_{L^2(P)}^2 \le \frac{1}{2} h_P \cot(\theta_{\max}) \sum_{e\in \mathcal E_h(P)} \| (\bfv_h - \bfw_h)\cdot \bs t_e \|_{e}^2.
\end{equation}
We then control the norm contribution from an interior edge $e$. Since $P$ is simply connected, any interior edge $e$ divides $P$ into two parts. Choose the part with less boundary edges and denote it by $P_e$. Note that $\int_{\partial P_e}\nabla \phi_h\cdot \bs t \dd s = 0$, consequently by $\nabla \phi_h\cdot \bs t$ being a constant on each edge on $\partial P_e$, we have an identity decomposing $\partial P_e = (\partial P_e  \cap \partial P) \cup e$,
\[
h_e \nabla \phi_h\cdot \bs t_e + \sum_{e_i\subset \partial P\cap \partial P_e} h_{e_i}\nabla \phi_h\cdot \bs t_{e_i} = 0,
\]
and thus
$$
\| \nabla \phi_h\cdot \bs t  \|_{L^2(e)}
\leq \sum_{e_i\subset \partial P\cap \partial P_e} \left (\frac{h_{e_i}}{h_{e}} \right )^{1/2} \|\nabla \phi_h\cdot \bs t_{e_i}\|_{L^2(e_i)}.
$$
Then from \eqref{eq:firststep}, we can get %
$$
\| \bfv_h - \bfw_h \|_{L^2(P)}^2
\le \frac{ \cot(\theta_{\max}) C(N_P)  }{2\lambda} h_P \sum_{e\subset \partial P} \left (\| \bfv_h \cdot \bs t_e \|_{L^2(e)}^2 +  \| \bfw_h \cdot \bs t_e \|_{L^2(e)}^2\right ).
$$
Finally, the desired estimate \eqref{lem_norm_equiv2_eq0} follows from the triangle inequality and estimates \eqref{lem_norm_equiv2_eq1}-\eqref{lem_norm_equiv2_eq2}.
\end{proof}

% It is clear and remarkable that under the conditions \hyperref[asp:polygon]{(P0)--(P3)} the explicit constant in \eqref{lem_norm_equiv2_eq0} is independent of the shape regularity of $P$.

\section{A VEM Scheme and an Error Bound}
\label{sec:VEM_err_eqn}

In this section, we describe the proposed virtual element formulation and derive an error bound.
We start with the standard weak formulation: find $\bfu\in \mathbf{ \bfH}_0(\curl ,\Omega)$ such that
\begin{equation}
\label{weak_form_1}
a(\bfu,\bfv): = (\alpha \, \curl \bfu,\curl \bfv)_{\Omega} + (\beta \, \bfu, \bfv)_{\Omega}  = (\bff,\bfv)_{\Omega}, ~~~ \forall \bfv\in \mathbf{ H}_0(\curl ,\Omega).
\end{equation}

\subsection{A Galerkin method}
We emphasize that the local ``virtual'' element space~\eqref{auxi_v_space_1} and the global one~\eqref{auxi_v_space_3} is right away a computable space, readily used for the discretization, unlike~\eqref{virtual_space_1}. The DoF on the diagonal edge can be determined by solving~\eqref{eq:curlcrul} explicitly, and a set of modified harmonic bases on boundary edges can be obtained and used in computation. As a result, the standard Galerkin formulation is computable without referring to the VEM framework of a projection-stabilization split: find $\bfu_h\in V_h$ such that
\begin{equation}
\label{auxi_scheme}
(\alpha_h \, \curl  \bfu_h, \curl \bfv_h)_{\Omega} + (\beta_h \, \bfu_h, \bfv_h)_{\Omega} = (\bff, \bfv_h)_{\Omega}, ~~~ \bfv_h\in V_h,
\end{equation}
where $\alpha_h$ and $\beta_h$ are the modification of $\alpha$ and $\beta$ according to the linearly approximated interface $\Gamma_h$. No projection operator is required since all the shape functions are computable.

However, this approach will introduce an extra partition which becomes inefficient especially in 3D. Instead, we shall treat henceforth the interface part of $\mathcal{T}_h$ as a virtual mesh only appearing in analysis not computation, whereas this associates the meaning of ``virtual'' in $V_h$. Its approximation capabilities will be discussed in Section \ref{sec:est-max-angle} based on the maximum angle condition.

\subsection{A VEM scheme}
Using the $L^2$-projection \eqref{l2projection}, we define a bilinear form
\begin{equation}
\label{VEM1}
a_h(\bfu, \bfv) :=  (\alpha_h \, \curl \bfu, \curl \bfv )_{\Omega} + (\beta_h \, \Pi_h \bfu, \Pi_h \bfv)_{\Omega} + \sum_{K\in\mathcal{T}^{Bi}_h} S_{K}(\bfu, \bfv)
\end{equation}
where the operator $\Pi_h$ is taken as $\Pi_{K^q}$ if $K=K^q\in \mathcal{T}^q_h$, and the identity operator otherwise. The stabilization $S_{K}(\bfu_h, \bfv_h)$ is defined element-wisely only on $K^q\in\mathcal{T}^q_h$, i.e., the quadrilateral subelements of the interface elements $K\in\mathcal{T}^{Bi}_h$:
\begin{equation}
\label{VEM2}
S_{K}(\bfu, \bfv) := \gamma_{K} h_{K} \left( \beta_h(\bfu - \Pi_{K^q}\bfu)\cdot\bft, (\bs v - \Pi_{K^q}\bs v)\cdot\bft \right)_{\partial K^q}
\end{equation}
with a parameter $\gamma_K$ independent of the mesh size and specified later. Note that the motivation of this stabilization term comes from the approximation of $  (\beta_h(\bs u_h - \Pi_h \bs u_h), \bs v_h - \Pi_h \bs v_h)_{K^q}$, and thus suggests the scaling $h_K$ in~\eqref{VEM2}.

At last, the proposed VEM discretization is to find $\bfu_h \in V_h$ such that
\begin{equation}
\label{VEM3}
a_h(\bfu_h, \bfv_h)  = (\bff, \Pi_h \bfv_h)_{\Omega}, ~~~ \forall \bfv_h\in V_h.
\end{equation}
\revision{\begin{remark}
\label{rem_scaling}
It is highlighted that the stabilization in \eqref{VEM2} employs an $h_K$ scaling, instead of the $h^{-1}_K$ weighted penalty widely used in DG-type, hybrid, or nonconforming methods \cite{2016CasagrandeWinkelmannHiptmairOstrowski,2016CasagrandeHiptmairOstrowski,2005HoustonPerugiaSchneebeli,2004HoustonPerugiaSchotzau,AwanouFabienGuzmanStern2020Hybridization,MuWangYeZhang2015weak}. This scaling is one of the keys for optimal convergence if the solution only has $\bfH^1(\curl;\Omega)$ regularity. The regularity of the traces of the underlying Sobolev spaces on element boundaries plays an important role in understanding this scaling, e.g., see the regularity-dependent penalty scaling in Ref.~\refcite{BrennerLiSung2007Locally}. When stabilization is imposed using a weighted $L^2$-inner product, the $H^{1/2}$ trace of $H^1$ functions suggests the $h^{-1/2}$ scaling. While in 2D, the $H^{-1/2}$ trace of $\bfH(\curl)$ functions suggests the $h^{1/2}$ scaling. Suboptimal convergence may occur if the order of scaling does not match the regularity. 
\end{remark}}

\subsection{An error bound}
As mentioned in Section \ref{sec:VEM}, some elements could be extremely anisotropic, and the commonly used norm equivalence in the VEM framework may not be applicable. Following the approach in Ref.~\refcite{2018CaoChen}, we shall work on an induced norm on $V_h$ by the bilinear form in~\eqref{VEM1} (Lemma \ref{lem_energy_norm}) which is weaker than the original graph norm:
\begin{equation}
\begin{split}
\label{energy_norm}
\vertiii{\bfv_h}^2_h :& = \| \alpha^{1/2}_h \curl \bfv_h \|^2_{L^2(\Omega)} + \| \beta^{1/2}_h \Pi_h \bfv_h \|^2_{L^2(\Omega)} \\
& + \sum_{K\in\mathcal{T}^{Bi}_h} h_K \| (\bfv_h - \Pi_{K^q}\bfv_h)\cdot\bft \|^2_{L^2(\partial K^q)}.
\end{split}
\end{equation}
\revision{
\begin{lemma}
\label{lem_norm_quiv}
Let $P$ be a simple polygon satisfying \hyperref[asp:polygon]{(P0)--(P3)}, then the following coercivity holds true: 
$$
\| \bfv_h \|_{\bfH(\curl;P)} \lesssim \| \curl \bfv_h \|_{L^2(P)} + \| \Pi_P\bfv_h \|_{L^2(P)} + h^{1/2}_P \| (\bfv_h - \Pi_P\bfv_h)\cdot\bft \|_{L^2(\partial P)}.
$$
\end{lemma}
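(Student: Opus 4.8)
The plan is to note that, since $\|\bfv_h\|_{\bfH(\curl;P)}^2 = \|\bfv_h\|_{L^2(P)}^2 + \|\curl\bfv_h\|_{L^2(P)}^2$, the $\curl$ contribution is already present on the right-hand side, so the entire task reduces to bounding $\|\bfv_h\|_{L^2(P)}$ by the three admissible terms. First I would write $\bfv_h = \bfw_h + \Pi_P\bfv_h$ with $\bfw_h := \bfv_h - \Pi_P\bfv_h$, so that the triangle inequality gives $\|\bfv_h\|_{L^2(P)} \le \|\bfw_h\|_{L^2(P)} + \|\Pi_P\bfv_h\|_{L^2(P)}$, the last summand being exactly one of the terms we are permitted to keep.

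The key structural observation is that $\bfw_h$ itself belongs to $V_h(P)$, so the anisotropy-robust Poincar\'e estimate of Lemma \ref{lem_Poincare} applies to it. Indeed, $\Pi_P\bfv_h \in [\mathbb{P}_0(P)]^2$ is a constant vector; it therefore lies in $\mathcal{ND}_h(K)$ on every triangle $K \in \mathcal{T}_h(P)$, is globally continuous, and has vanishing $\curl$, whence $\Pi_P\bfv_h \in V_h(P)$ and consequently $\bfw_h \in V_h(P)$. Two further identities follow: $\curl\bfw_h = \curl\bfv_h$, because the $\curl$ of a constant vector vanishes, and the tangential trace of $\bfw_h$ on $\partial P$ equals $(\bfv_h - \Pi_P\bfv_h)\cdot\bft$.

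With these facts I would apply Lemma \ref{lem_Poincare} to $\bfw_h$ to obtain
$$
\|\bfw_h\|_{L^2(P)} \lesssim h_P^{1/2}\,\|(\bfv_h - \Pi_P\bfv_h)\cdot\bft\|_{L^2(\partial P)} + h_P\,\|\curl\bfv_h\|_{L^2(P)},
$$
in which the hidden constant is the one furnished by Lemma \ref{lem_Poincare}, depending only on $\theta_{\max}$, $\lambda$ and $N_P$, and hence uniform across the admissible polygons under (P0)--(P3), in particular independent of the aspect ratio of $\mathcal{T}_h(P)$. Finally, bounding $h_P \le \mathrm{diam}(\Omega) \lesssim 1$ to absorb the prefactor on $\|\curl\bfv_h\|_{L^2(P)}$, and combining the resulting estimate for $\|\bfv_h\|_{L^2(P)}$ with $\|\bfv_h\|_{\bfH(\curl;P)} \le \|\bfv_h\|_{L^2(P)} + \|\curl\bfv_h\|_{L^2(P)}$, delivers the claimed coercivity.

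The only steps needing genuine care are the verification that $\Pi_P\bfv_h$, and hence $\bfw_h$, truly lie in $V_h(P)$ so that Lemma \ref{lem_Poincare} may be invoked, and the bookkeeping ensuring the resulting constant remains uniform in the mesh anisotropy; neither is a serious obstacle, since all the substantive analytic work has already been carried out in Lemma \ref{lem_Poincare}, and the remainder is a splitting together with two applications of the triangle inequality.
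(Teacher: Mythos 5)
Your proposal is correct and follows essentially the same route as the paper: split $\bfv_h = (\bfv_h - \Pi_P\bfv_h) + \Pi_P\bfv_h$, use the triangle inequality, and apply the Poincar\'e-type estimate of Lemma \ref{lem_Poincare} to the fluctuation $\bfv_h - \Pi_P\bfv_h$, using that the constant $\Pi_P\bfv_h$ is curl-free. Your explicit verification that $\Pi_P\bfv_h$ (and hence the fluctuation) lies in $V_h(P)$ is a detail the paper leaves implicit, and is a welcome addition.
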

\begin{proof}
It suffices to bound $\| \bfv_h \|_{L^2(P)}$. First, the triangle inequality implies 
$$
\| \bfv_h \|_{L^2(P)} \lesssim \| \bfv_h - \Pi_P\bfv_h  \|_{L^2(P)} + \| \Pi_P\bfv_h  \|_{L^2(P)}.
$$
To bound $\| \Pi_{P} \bfv_h - \bfv_h \|_{L^2(P)}$, using Lemma \ref{lem_Poincare}
yields the following estimate
\begin{equation}
\label{lem_norm_quiv_eq1}
\| \Pi_{P} \bfv_h - \bfv_h \|_{L^2(P)} \lesssim h^{1/2}_K \| (\Pi_{P} \bfv_h - \bfv_h)\cdot\bft \|_{L^2(P)} + h_K \| \curl  \bfv_h \|_{L^2(P)}
\end{equation}
which finishes the proof.
\end{proof}
We highlight that the Poincar\'e inequality in Lemma \ref{lem_Poincare} is the key to obtain the robust coercivity on highly anisotropic polygonal elements.}
\begin{lemma}
\label{lem_energy_norm}
$\vertiii{\cdot}_h$ defines a norm on $V_h$.
\end{lemma}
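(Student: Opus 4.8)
The plan is to verify the three defining properties of a norm, treating positive-definiteness as the only substantive point. Non-negativity is immediate, since $\vertiii{\cdot}^2_h$ is a finite sum of squared $L^2$-seminorms. Homogeneity follows because each constituent operation ($\curl$, $\Pi_h$, $\Pi_{K^q}$, and the tangential trace) is linear, so every term scales quadratically under $\bfv_h \mapsto c\,\bfv_h$. For the triangle inequality I would regard $\vertiii{\bfv_h}_h$ as the $\ell^2$-norm of the vector whose entries are the individual seminorm values; each entry is itself a seminorm, hence subadditive, so Minkowski's inequality in $\ell^2$ gives $\vertiii{\bfu_h + \bfv_h}_h \le \vertiii{\bfu_h}_h + \vertiii{\bfv_h}_h$.

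The essential step is definiteness: I must show $\vertiii{\bfv_h}_h = 0$ forces $\bfv_h = 0$. If the expression vanishes, each of its three non-negative groups of terms vanishes separately. Since $\alpha_h$ and $\beta_h$ are positive piecewise constants, this yields (i) $\curl \bfv_h = 0$ on $\Omega$; (ii) $\Pi_h \bfv_h = 0$; and (iii) $(\bfv_h - \Pi_{K^q}\bfv_h)\cdot\bft = 0$ on $\partial K^q$ for every quadrilateral interface subelement $K^q \in \mathcal{T}^q_h$. I would then argue $\bfv_h = 0$ element by element. On every element that is not an interface quadrilateral --- namely the non-interface triangles and the triangular cut elements $K^t$ --- the operator $\Pi_h$ is the identity, so (ii) reads $\| \beta^{1/2}_h \bfv_h \|_{L^2(K)} = 0$ and hence $\bfv_h = 0$ there at once. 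On an interface quadrilateral $K^q$, (ii) gives $\Pi_{K^q}\bfv_h = 0$, whereupon (iii) reduces to $\bfv_h \cdot \bft = 0$ on $\partial K^q$, while (i) gives $\curl \bfv_h = 0$ on $K^q$. Feeding these three facts into the coercivity estimate of Lemma \ref{lem_norm_quiv} with $P = K^q$ bounds $\| \bfv_h \|_{\bfH(\curl; K^q)}$ by zero, so $\bfv_h = 0$ on $K^q$ as well. Collecting all elements gives $\bfv_h \equiv 0$ on $\Omega$.

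I expect no genuine obstacle: the proof is essentially a decoding of the definition of $\vertiii{\cdot}_h$ combined with a coercivity result already in hand. The only place requiring anything beyond bookkeeping is the quadrilateral case, and there the real work is done by the anisotropy-robust Poincar\'e inequality of Lemma \ref{lem_Poincare} (packaged as Lemma \ref{lem_norm_quiv}), which is exactly what rules out a nonzero curl-free field with vanishing tangential boundary data and vanishing $\Pi_{K^q}$-projection on a possibly very thin $K^q$. Without that robustness the definiteness on anisotropic quadrilaterals would be the hard part; with it, the argument is routine.
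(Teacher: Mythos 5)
Your proof is correct and follows the same route as the paper, which simply cites the coercivity of Lemma \ref{lem_norm_quiv} (itself resting on the anisotropy-robust Poincar\'e inequality of Lemma \ref{lem_Poincare}) to obtain definiteness; you have merely spelled out the element-by-element bookkeeping and the routine seminorm properties that the paper leaves implicit.
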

\begin{proof}
\revision{It immediately follows from Lemma \ref{lem_norm_quiv}.}
\end{proof}

In the following main theorem, we derive an error equation to~\eqref{VEM3} to demonstrate how the VEM framework can, in a novel manner, overcome the difficulties of the non-conformity. \revision{This difficulty causes various issues for other DG-based or interior penalty-based approaches (c.f. Section \ref{sec:intro} and Remark \ref{rem_scaling}; see also the $\boldsymbol{H}^{1/2}$-penalty used in Ref.~\refcite{2005HoustonPerugiaSchneebeli}, and Theorem 2 in Ref.~\refcite{2016CasagrandeHiptmairOstrowski}).}
To reinstate the optimal rate of convergence, we need to further assume that the source term $\bff$ bears certain extra local regularity. 
\begin{theorem}
\label{thm_error_eqn}
Assume that $\bff\in L^2(\Omega)$ is locally in $\bfH^1$ around the interface, namely $\bff\in \bfH^1(K^q)$ on each $K^q \in \mathcal{T}^q_h$, and $\bfu\in \bfH^1(\curl;\Omega^-\cup\Omega^+)$ is the solution to \eqref{weak_form_1}. Let $\bs \bfv_h\in V_h$ be an arbitrary function in the VEM space, then for $\bfeta_h = \bfu_h - \bfv_h \in V_h$:
\begin{equation}
\begin{split}
\label{thm_error_eqn_eq0}
\vertiii{\bfeta_h}_h \lesssim &    \Big( \sum_{K^q\in\mathcal{T}^q_h} h_K^2 |f|^2_{H^1(K^q)}  + \sum_{K^q\in\mathcal{T}^q_h}h_K \| (\bfv_h - \Pi_h \bfv_h)\cdot\bft \|^2_{L^2(\partial K^q)} \Big)^{1/2}  \\
& + \| \alpha \, \emph{curl}\, \bfu - \alpha_h \curl \bfv_h \|_{L^2(\Omega^{\pm})} + \| \beta \bfu - \beta_h \Pi_h \bfv_h \|_{L^2(\Omega)} .
\end{split}
\end{equation}
\end{theorem}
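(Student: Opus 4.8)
The plan is to combine coercivity of $a_h$ in the norm $\vertiii{\cdot}_h$ with a Galerkin-type error equation obtained by playing the discrete scheme \eqref{VEM3} against the continuous weak form \eqref{weak_form_1}. Setting $\bfeta_h = \bfu_h - \bfv_h \in V_h$, a direct comparison of the definitions \eqref{VEM1}--\eqref{energy_norm}, using $\gamma_K\beta_h \gtrsim 1$ and the positivity bounds on $\beta_h$, gives $\vertiii{\bfeta_h}_h^2 \lesssim a_h(\bfeta_h,\bfeta_h)$. I then write $a_h(\bfeta_h,\bfeta_h) = a_h(\bfu_h,\bfeta_h) - a_h(\bfv_h,\bfeta_h)$ and use \eqref{VEM3} to replace $a_h(\bfu_h,\bfeta_h) = (\bff, \Pi_h\bfeta_h)_\Omega$. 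Since $\bfeta_h \in V_h \subset \bfH_0(\curl;\Omega)$ is admissible in \eqref{weak_form_1}, I substitute $(\bff,\bfeta_h)_\Omega = (\alpha\,\curl\bfu, \curl\bfeta_h)_\Omega + (\beta\bfu, \bfeta_h)_\Omega$ and split $(\bff, \Pi_h\bfeta_h)_\Omega = (\bff, \bfeta_h)_\Omega + (\bff, \Pi_h\bfeta_h - \bfeta_h)_\Omega$. Collecting terms yields
\[
\vertiii{\bfeta_h}_h^2 \lesssim (\alpha\,\curl\bfu - \alpha_h\curl\bfv_h, \curl\bfeta_h)_\Omega + \big[(\beta\bfu,\bfeta_h)_\Omega - (\beta_h\Pi_h\bfv_h,\Pi_h\bfeta_h)_\Omega\big] + (\bff, \Pi_h\bfeta_h - \bfeta_h)_\Omega - \sum_{K}S_K(\bfv_h,\bfeta_h).
\]

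The two outer groups are immediate. The curl term is bounded via Cauchy--Schwarz by $\|\alpha\,\curl\bfu - \alpha_h\curl\bfv_h\|_{L^2(\Omega^{\pm})}\|\curl\bfeta_h\|_{L^2(\Omega)}$, and $\|\curl\bfeta_h\|_{L^2(\Omega)} \lesssim \vertiii{\bfeta_h}_h$ (from the $\alpha_h$ term), giving the third term of \eqref{thm_error_eqn_eq0}. For the stabilization, Cauchy--Schwarz on each $\partial K^q$ followed by the discrete Cauchy--Schwarz over $\mathcal{T}^q_h$ gives $\sum_{K}S_K(\bfv_h,\bfeta_h) \lesssim \big(\sum_{K^q} h_K\|(\bfv_h-\Pi_h\bfv_h)\cdot\bft\|_{L^2(\partial K^q)}^2\big)^{1/2}\vertiii{\bfeta_h}_h$, since $h_K^{1/2}\|(\bfeta_h - \Pi_{K^q}\bfeta_h)\cdot\bft\|_{L^2(\partial K^q)}$ is exactly the stabilization part of $\vertiii{\bfeta_h}_h$; this matches the second term.

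The remaining work is localized to $\mathcal{T}^q_h$, where $\Pi_h\bfeta_h - \bfeta_h \neq 0$. For the source term I exploit that $\bfeta_h - \Pi_{K^q}\bfeta_h$ is $L^2$-orthogonal to constants to subtract $\Pi_{K^q}\bff$, so $(\bff, \Pi_h\bfeta_h - \bfeta_h)_{K^q} = -(\bff - \Pi_{K^q}\bff, \bfeta_h - \Pi_{K^q}\bfeta_h)_{K^q}$; since $K^q$ is convex, the Poincar\'e--Wirtinger inequality with constant $\lesssim h_K$ (independent of aspect ratio) gives $\|\bff - \Pi_{K^q}\bff\|_{L^2(K^q)} \lesssim h_K|f|_{H^1(K^q)}$, while Lemma \ref{lem_Poincare} gives $\|\bfeta_h - \Pi_{K^q}\bfeta_h\|_{L^2(K^q)} \lesssim \vertiii{\bfeta_h}_h$ locally, and summation produces the first term. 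The mass group is split as $(\beta\bfu - \beta_h\Pi_h\bfv_h, \Pi_h\bfeta_h)_\Omega + (\beta\bfu, \bfeta_h - \Pi_h\bfeta_h)_\Omega$, where the first piece is bounded by $\|\beta\bfu - \beta_h\Pi_h\bfv_h\|_{L^2(\Omega)}\|\Pi_h\bfeta_h\|_{L^2(\Omega)} \lesssim \|\beta\bfu - \beta_h\Pi_h\bfv_h\|_{L^2(\Omega)}\vertiii{\bfeta_h}_h$, the fourth term.

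The main obstacle is the leftover piece $(\beta\bfu, \bfeta_h - \Pi_h\bfeta_h)_\Omega$: it cannot be controlled by the oscillation of $\beta\bfu$, because $\bfu$ is only piecewise $\bfH^1$ and each $K^q$ straddles the true interface $\Gamma$ through the $\mathcal O(h^2)$ mismatch region. The key is again orthogonality --- on each $K^q$ the \emph{constant} $\beta_h\Pi_{K^q}\bfv_h$ is $L^2$-orthogonal to $\bfeta_h - \Pi_{K^q}\bfeta_h$, so $(\beta\bfu, \bfeta_h - \Pi_{K^q}\bfeta_h)_{K^q} = (\beta\bfu - \beta_h\Pi_{K^q}\bfv_h, \bfeta_h - \Pi_{K^q}\bfeta_h)_{K^q}$; Cauchy--Schwarz, Lemma \ref{lem_Poincare}, and summation fold this into the fourth term $\|\beta\bfu - \beta_h\Pi_h\bfv_h\|_{L^2(\Omega)}\vertiii{\bfeta_h}_h$. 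Dividing through by $\vertiii{\bfeta_h}_h$ completes the proof. The decisive anisotropy-robust ingredient is that every occurrence of $\|\bfeta_h - \Pi_{K^q}\bfeta_h\|_{L^2(K^q)}$ is estimated via Lemma \ref{lem_Poincare} rather than through any shape-regularity-dependent trace or inverse inequality.
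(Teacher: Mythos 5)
Your proposal is correct and follows essentially the same route as the paper: coercivity of $a_h$ in $\vertiii{\cdot}_h$, the error equation $a_h(\bfeta_h,\bfeta_h)=(\bff,\Pi_h\bfeta_h-\bfeta_h)_{\Omega}+[(\bff,\bfeta_h)_{\Omega}-a_h(\bfv_h,\bfeta_h)]$, conformity of $V_h$ to kill the curl consistency term, projection orthogonality for the data oscillation and mass terms, Cauchy--Schwarz for the stabilization, and Lemma \ref{lem_Poincare} (via Lemma \ref{lem_norm_quiv}) to absorb $\|\bfeta_h-\Pi_{K^q}\bfeta_h\|_{L^2(K^q)}$ and $\|\bfeta_h\|_{L^2(\Omega)}$ into $\vertiii{\bfeta_h}_h$. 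Your two-step treatment of the mass term (splitting off $(\beta\bfu,\bfeta_h-\Pi_h\bfeta_h)_{\Omega}$ and reinserting the constant $\beta_h\Pi_{K^q}\bfv_h$ by orthogonality) is algebraically identical to the paper's one-line identity $(\beta_h\Pi_{K^q}\bfv_h,\Pi_{K^q}\bfeta_h)_{K^q}=(\beta_h\Pi_{K^q}\bfv_h,\bfeta_h)_{K^q}$, so no substantive difference remains.
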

\begin{proof}
We have
\begin{equation}
\label{thm_error_eqn_eq1}
a_h(\bfu_h, \bfeta_h)-  a_h( \bfv_h, \bfeta_h)  =\underbrace{ (\bff, \Pi_h \bfeta_h - \bfeta_h)_{\Omega} }_{(\rm I)} + \underbrace{ (\bff,  \bfeta_h)_{\Omega}-  a_h( \bfv_h, \bfeta_h) }_{(\rm{II})}.
\end{equation}
For $(\rm{ I})$, on all the triangular elements in $\mathcal{T}_h$, $\Pi_h$ reduce to identity operators, so $\Pi_h \bfeta_h - \bfeta_h$ simply vanishes. On a quadrilateral element $K^q\in\mathcal{T}^q_h$,
by the definition under~\eqref{VEM1},
$\Pi_h \bfeta_h=\Pi_{K^q}\bfeta_h$, which is the $L^2$-projection of $\bfeta_h $ on $K^q$.
Therefore
\begin{equation}
\begin{split}
\label{thm_error_eqn_eq2}
(\bff, \Pi_{K^q} \bfeta_h -  \bfeta_h)_{K^q} &
= (\bff - \Pi_{K^q} \bff, \Pi_{K^q} \bfeta_h -   \bfeta_h )_{K^q} \\
& \lesssim h_K |\bff|_{H^1(K^q)} \| \Pi_{K^q} \bfeta_h -  \bfeta_h \|_{L^2(K^q)}.
\end{split}
\end{equation}
For the term $(\rm{II})$ in~\eqref{thm_error_eqn_eq1}, using $ \underline{\curl} \, \alpha\curl \bfu + \beta\bs u = \bs f$, we have
\begin{equation}
\begin{split}
\label{thm_error_eqn_eq3}
 ({\rm II})
 = & \underbrace{ ( \underline{\curl} \, \alpha \curl \bfu,  \bfeta_h )_{\Omega} - (\alpha_h \, \curl  \bfv_h, \curl \bfeta_h )_{\Omega} }_{({\rm II}a)} \\
+ & \underbrace{ (\beta \, \bfu,    \bfeta_h)_{\Omega} - (\beta_h \, \Pi_h \bfv_h, \Pi_h \bfeta_h)_{\Omega} }_{({\rm II}b)} - \underbrace{ \sum_{K\in\mathcal{T}^{Bi}_h} S_K( \bfv_h, \bfeta_h)}_{({\rm II}c)}.
\end{split}
\end{equation}
For $({\rm II}a)$, since $\bfeta_h$ is in the conforming auxiliary space $V_h$ in~\eqref{auxi_v_space_3}, using the integration by parts, the continuity condition of the original PDE, and the curl condition in~\eqref{auxi_v_space_1}
we immediately have
\begin{equation}
\begin{split}
\label{thm_error_eqn_eq4}
({\rm II}a) & = (\alpha \, \curl \bfu, \curl  \bfeta_h )_{\Omega} - (\alpha_h \, \curl  \bfv_h, \curl  \bfeta_h )_{\Omega} \\
& \le \| \alpha \, \curl  \bfu - \alpha_h \, \curl \bfv_h \|_{L^2(\Omega)} \| \curl  \bfeta_h \|_{L^2(\Omega)}.
\end{split}
\end{equation}
For $({\rm II}b)$, on a triangular element $K^t$, we note that
\begin{equation}
\label{thm_error_eqn_eq5}
(\beta_h \, \Pi_h \bfv_h, \Pi_h \bfeta_h)_{K^t} = (\beta_h \, \bfv_h, \bfeta_h)_{K^t}.
\end{equation}
On a quadrilateral element $K^q$, by~\eqref{l2projection}
we have
\begin{equation}
\label{thm_error_eqn_eq6}
(\beta_h \, \Pi_{K^q} \bfv_h, \Pi_{K^q}\bfeta_h)_{K^q}
= (\beta_h \, \Pi_{K^q} \bfv_h,  \bfeta_h)_{K^q}.
\end{equation}
Combining~\eqref{thm_error_eqn_eq5} and~\eqref{thm_error_eqn_eq6}, we have
\begin{equation}
\label{thm_error_eqn_eq7}
({\rm II}b) = ( \beta \bfu - \beta_h \Pi_h \bfv_h, \bfeta_h )_{\Omega} \le \| \beta \bfu - \beta_h \Pi  \bfv_h \|_{L^2(\Omega)} \| \bfeta_h \|_{L^2(\Omega)} .
\end{equation}
In addition, for the stabilization term $({\rm II}c)$, by $( \bfeta_h - \Pi_{K^q} \bfeta_h )\cdot\bft_e\in \mathbb{P}_0(e)$ on each $e\subset\partial K^q$ and the definition of the interpolant in \eqref{interp_2}, we have
\begin{equation}
\begin{split}
\label{thm_error_eqn_eq6_1}
S_K( \bfv_h, \bfeta_h) & = h_K \int_{\partial K^q} (\bfv_h - \Pi_{K^q} \bfv_h )\cdot\bft ~ ( \bfeta_h - \Pi_{K^q}  \bfeta_h )\cdot\bft\, \dd s \\
& \le h_K \| (\bfv_h - \Pi_{K^q} \bfv_h )\cdot\bft \|_{L^2(\partial K^q)} \| ( \bfeta_h - \Pi_{K^q}  \bfeta_h )\cdot\bft \|_{L^2(\partial K^q)}.
\end{split}
\end{equation}
Finally, putting the estimates in~\eqref{thm_error_eqn_eq2}-\eqref{thm_error_eqn_eq6_1} to~\eqref{thm_error_eqn_eq1} yields the following bound
\begin{equation}
\begin{split}
\label{thm_error_eqn_eq8}
\vertiii{\bfeta_h}^2_h \lesssim & \Big( \sum_{K^q\in\mathcal{T}^q_h} h^2_K |f|^2_{H^1(K^q)} + \sum_{K^q\in\mathcal{T}^q_h}h_K \| (\bfv_h - \Pi_h \bfv_h)\cdot\bft \|^2_{L^2(\partial K^q)} 
\\
 + &  \| \alpha \, \curl  \bfu - \alpha_h \, \curl  \bfv_h \|^2_{L^2(\Omega)} 
+ \| \beta \bfu - \beta_h \Pi_h \bfv_h \|^2_{L^2(\Omega)}  \Big)^{1/2} 
\\
\cdot & \Big( \sum_{K^q\in\mathcal{T}^q_h} \big\{ \| \Pi_{K^q} \bfeta_h - \bfeta_h \|^2_{L^2(K^q)} 
+ \| ( \bfeta_h - \Pi_{K^q}  \bfeta_h )\cdot\bft \|^2_{L^2(\partial K^q)} \big\}
\\
& + \| \curl  \bfeta_h \|^2_{L^2(\Omega)} 
+ \| \bfeta_h \|^2_{L^2(\Omega)}   \Big)^{1/2}.
\end{split}
\end{equation}
The bound of $\| \Pi_{K^q} \bfeta_h - \bfeta_h \|_{L^2(K^q)}$ on quadrilateral elements follows from \eqref{lem_norm_quiv_eq1} in the coercivity of Lemma \ref{lem_norm_quiv}.
Putting the estimate above into~\eqref{thm_error_eqn_eq8}, and canceling one $\vertiii{\bfeta_h}$ on both sides yield the desired result.
\end{proof}

Then, in the later discussion, we shall let $\bfv_h$ be an interpolation $\bfu_I$, and thus the error can be decomposed into:
\begin{equation}
\label{err_decp}
\bfu-\bfu_h = \bfxi_h+\bfeta_h, ~~~
\text{where }~~ \bfxi_h = \bfu -\bfu_I,  ~~~ \text{and} ~~~ \bfeta_h = \bfu_I - \bfu_h,
\end{equation}

\section{Interpolation Error Estimates}
\label{sec:IFE_disre}
In this section, we estimate the interpolation errors and projection errors of virtual element spaces. Given any triangle $T$, the interpolation in \eqref{interp_2} exactly becomes the canonical edge interpolation \cite{2003Monk}. If $T$ is further assumed to be shape regular, then the following standard optimal approximation capability holds: 
\begin{equation}
\label{interp_1_err}
\| \bfu - I_T\bfu \|_{\bfH(\text{curl};T)} \lesssim h_T \| \bfu \|_{\bfH^1(\text{curl};T)}, ~~~~ \bfu \in \bfH^1(\text{curl};T).
\end{equation}

\subsection{Estimates based on the maximum angle condition}
\label{sec:est-max-angle}
Due to the assumption of the interface being smooth, we note that certain elements in $\mathcal{T}^t_h$ may inevitably have high aspect ratio in the process of mesh refining, which results that the commonly assumed shape regularity does not hold anymore. Consequently, \revision{the standard approximation results} of the edge interpolation \eqref{interp_1_err} cannot be directly applied. However, since maximum angles of triangles in the auxiliary triangulation around the interface
are uniformly bounded if the background mesh is shape regular, the interpolation error estimates can nevertheless be established based on the maximum angle condition. The interpolation estimates based on the maximum angle condition have been long studied for Lagrange elements \cite{1976BabuskaAziz,1992Michal}, Raviart-Thomas elements \cite{1999AcostaRicardo,2005BuffaCostabelDauge,1985Donatella}, and 3D N\'ed\'elec elements \cite{2005BuffaCostabelDauge}. 

\begin{lemma}[Lemma 2.3 and Theorem 4.1 in Ref.~\refcite{1999AcostaRicardo}]
\label{lem_maxangle}
Given any triangle $T$, let $\theta_T$ be the maximum angle of $T$, then
\begin{equation}
\label{lem_maxangle_eq0}
\| \bfu - I_T\bfu \|_{\bfH(\curl;T)} \lesssim \frac{h_K}{\sin(\theta_T)} \| \bfu \|_{\bfH^1(\curl;T)}, ~~~~ \bfu \in \bfH^1(\curl;T).
\end{equation}
\end{lemma}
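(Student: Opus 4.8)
The plan is to transport the estimate to the Raviart--Thomas setting in $\bfH(\div)$, where the maximum-angle interpolation theory of Ref.~\refcite{1999AcostaRicardo} lives, by means of the planar rotation that is special to two dimensions. First I would set $\bfv^{\perp} := (v_2,-v_1)^{\intercal}$ for $\bfv = (v_1,v_2)^{\intercal}$, so that $\div\bfv^{\perp} = \curl\bfv$ and, for a unit edge tangent $\bft = (t_1,t_2)^{\intercal}$ and the associated normal $\bfn = (t_2,-t_1)^{\intercal}$, one has $\bfv^{\perp}\cdot\bfn = \bfv\cdot\bft$. This rotation is an $L^2$-isometry, it maps $\mathcal{ND}_h(T)$ onto the lowest-order Raviart--Thomas space, and it carries the tangential edge moments defining $I_T$ onto the normal edge moments defining the Raviart--Thomas interpolant $I_T^{\mathrm{RT}}$. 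Consequently $I_T$ and the rotation commute, and $\|\bfu - I_T\bfu\|_{\bfH(\curl;T)} = \|\bfu^{\perp} - I_T^{\mathrm{RT}}\bfu^{\perp}\|_{\bfH(\div;T)}$ with $\|\bfu\|_{\bfH^1(\curl;T)} = \|\bfu^{\perp}\|_{\bfH^1(\div;T)}$; it therefore suffices to prove the $\bfH(\div)$-estimate $\|\bfw - I_T^{\mathrm{RT}}\bfw\|_{\bfH(\div;T)}\lesssim (h_T/\sin\theta_T)\|\bfw\|_{\bfH^1(\div;T)}$.

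For the Raviart--Thomas bound I would map $T$ from a reference triangle by an affine $F(\hat{\bfx}) = B\hat{\bfx} + \bfc$ and use the contravariant Piola transform $\mathcal{P}\hat{\bfw} = (\det B)^{-1}B\,\hat{\bfw}\circ F^{-1}$, which preserves normal traces and satisfies $\div(\mathcal{P}\hat{\bfw}) = (\det B)^{-1}(\div\hat{\bfw})\circ F^{-1}$. The structural fact that makes the argument go through is that $\mathcal{P}$ intertwines interpolation, $I_T^{\mathrm{RT}}\circ\mathcal{P} = \mathcal{P}\circ I_{\hat T}^{\mathrm{RT}}$, so the error factors as $\bfw - I_T^{\mathrm{RT}}\bfw = \mathcal{P}(\hat{\bfw} - I_{\hat T}^{\mathrm{RT}}\hat{\bfw})$. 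On the fixed reference element the Bramble--Hilbert lemma controls both pieces of the $\bfH(\div)$ norm of $\hat{\bfw} - I_{\hat T}^{\mathrm{RT}}\hat{\bfw}$ by the first-order seminorms of $\hat{\bfw}$ and of $\div\hat{\bfw}$, with a constant depending only on the reference triangle.

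The transfer of these reference estimates back to $T$ is the crux, and I expect it to be the main obstacle. Bounding the Piola factors and the chain-rule Jacobians in plain operator norm introduces the condition number $\|B\|\,\|B^{-1}\|$, which is governed by the \emph{smallest} angle of $T$ (i.e.\ by shape regularity) and blows up for needle-like triangles even when the maximum angle stays away from $\pi$; this badly over-estimates the true behaviour. To recover the sharp dependence on the maximum angle one must replace the isotropic operator-norm bounds by directional ones: I would align $B$ with the two edges meeting at the vertex of largest angle $\theta_T$, estimate each transferred component of the error along the corresponding edge direction with its own length factor $\lesssim h_T$, and recover an arbitrary derivative from the two edge-directional derivatives through the reciprocal frame. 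Because those two edge directions enclose exactly the angle $\theta_T$, the reciprocal frame vectors have length $1/\sin\theta_T$, which supplies the single factor $1/\sin\theta_T$; this is the only place the maximum-angle condition enters, and it is what collapses the spurious minimum-angle dependence into the claimed $h_T/\sin\theta_T$. Combined with the divergence estimate, which transfers cleanly with an $h_T$ factor and no angle dependence, this yields the asserted $\bfH(\div)$ bound, which then returns to $\bfH(\curl)$ verbatim through the rotation of the first paragraph. This directional bookkeeping is precisely the content of Lemma 2.3 and Theorem 4.1 in Ref.~\refcite{1999AcostaRicardo}.
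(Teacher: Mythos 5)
The paper offers no proof of this lemma---it is imported verbatim from Acosta--Dur\'an (Lemma 2.3 and Theorem 4.1 of Ref.~\refcite{1999AcostaRicardo})---and your outline is a faithful reconstruction of how that reference establishes it: the planar rotation identifying $\mathcal{ND}_h(T)$ with the lowest-order Raviart--Thomas space and tangential with normal edge moments (your identities $\div\bfv^{\perp}=\curl\bfv$ and $\bfv^{\perp}\cdot\bfn=\bfv\cdot\bft$ check out) is exactly the standard 2D bridge, and replacing operator-norm Piola bounds by edge-aligned directional estimates at the largest-angle vertex is precisely where the $1/\sin\theta_T$ factor, rather than a shape-regularity constant, comes from. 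Nothing further is needed.
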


The results above can be directly applied to estimate the interpolation errors of the virtual space $V_h(K^q)$ on $K^q\in\mathcal{T}^q_h$. Again we present in a more general setting.
\begin{lemma}\label{lem_auxi_interp}
Let $P$ be a simple polygon satisfying \hyperref[asp:polygon]{(P0)--(P2)} and let $I_P \bs u$ be the edge interpolation to $V_h(P)$ defined in \eqref{interp_2}. Then
\begin{equation}
\label{lem_auxi_interp_eq0}
\| \bfu - I_P \bs u \|_{\bfH(\curl;P)} \lesssim h_{P}\| \bfu \|_{\bfH^1(\curl;{\rm Conv}(P))}, ~~~~ \bfu \in \bfH^1(\curl;{\rm Conv}(P)),
\end{equation}
\revision{where ${\rm Conv}(P)$ is the convex hull of $P$.}
\end{lemma}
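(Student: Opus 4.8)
The plan is to bridge the virtual interpolant $I_P\bfu$ with the genuine element-wise N\'ed\'elec interpolant, for which the maximum-angle estimate of Lemma \ref{lem_maxangle} applies verbatim. Let $\tilde{\bfu}_h \in \mathcal{ND}_h(\mathcal{T}_h(P))$ denote the canonical edge interpolant of $\bfu$ on \emph{every} triangle of $\mathcal{T}_h(P)$, i.e. $\tilde{\bfu}_h$ matches the tangential degree of freedom of $\bfu$ on every edge, interior or boundary; tangential continuity makes $\tilde{\bfu}_h\in\bfH(\curl;P)$. First I would split the error as
\[
\|\bfu - I_P\bfu\|_{\bfH(\curl;P)} \le \|\bfu - \tilde{\bfu}_h\|_{\bfH(\curl;P)} + \|\tilde{\bfu}_h - I_P\bfu\|_{\bfH(\curl;P)}.
\]
For the first term, Lemma \ref{lem_maxangle} on each $K\in\mathcal{T}_h(P)$, combined with the uniform maximum-angle condition (P1), gives $\|\bfu - \tilde{\bfu}_h\|_{\bfH(\curl;K)}\lesssim h_K\|\bfu\|_{\bfH^1(\curl;K)}$; summing over $K$ and using $h_K\le h_P$ yields $\|\bfu - \tilde{\bfu}_h\|_{\bfH(\curl;P)}\lesssim h_P\|\bfu\|_{\bfH^1(\curl;P)}\le h_P\|\bfu\|_{\bfH^1(\curl;{\rm Conv}(P))}$. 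This term requires no enlargement of the domain.

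It remains to estimate $\bfd_h := \tilde{\bfu}_h - I_P\bfu$. Since $\tilde{\bfu}_h$ and $I_P\bfu$ carry identical tangential data on every boundary edge and the tangential trace of a lowest-order field is constant on each edge, $\bfd_h\cdot\bs t=0$ on $\partial P$, so $\bfd_h\in \mathcal{ND}_{h,0}(\mathcal{T}_h(P))$. For the curl part I would exploit that $I_P\bfu$ solves \eqref{eq:curlcrul}, giving $(\curl I_P\bfu,\curl\bfd_h)_P=0$ and hence $\|\curl\bfd_h\|^2_{L^2(P)} = \|\curl\tilde{\bfu}_h\|^2_{L^2(P)} - \|\curl I_P\bfu\|^2_{L^2(P)}$. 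Writing $\bar c := |P|^{-1}\int_P\curl\bfu\dd x$ and noting that $\curl\tilde{\bfu}_h|_K$ and $\curl I_P\bfu$ are the means of $\curl\bfu$ over $K$ and over $P$, a variance identity turns the right-hand side into $\sum_{K}|K|\,(\,\overline{\curl\bfu}^{\,K}-\bar c\,)^2\le\|\curl\bfu - \bar c\|_{L^2(P)}^2$, which a Poincar\'e inequality on the \emph{convex} set ${\rm Conv}(P)\supseteq P$ bounds by $h_P^2|\curl\bfu|^2_{H^1({\rm Conv}(P))}$. This is precisely where the convex hull, and thus the hypothesis $\bfu\in\bfH^1(\curl;{\rm Conv}(P))$, enters.

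The crux, and the step I expect to be the main obstacle, is the anisotropy-robust discrete Friedrichs estimate $\|\bfd_h\|_{L^2(P)}\lesssim h_P\|\curl\bfd_h\|_{L^2(P)}$ under only (P0)--(P2). Note that Lemma \ref{lem_Poincare} cannot be quoted directly, since $\bfd_h$ has only piecewise-constant (not globally constant) curl; instead I would adapt the interior-edge control used in its proof. Assumption (P0) forces $S_{h,0}(\mathcal{T}_h(P))=\{0\}$, so $\curl$ is injective on $\mathcal{ND}_{h,0}(\mathcal{T}_h(P))$ (cf. Lemma \ref{lem_iso_mor}) and the dual graph of the triangulated polygon is a tree. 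Peeling leaf triangles and using the elementwise circulation $\int_{\partial K}\bfd_h\cdot\bs t = \int_K\curl\bfd_h$, the tangential value of $\bfd_h$ on each interior diagonal can be expressed by telescoping the curls across the tree, the boundary contributions dropping out because $\bfd_h\cdot\bs t=0$ on $\partial P$. Converting these edge quantities into the $L^2(P)$ norm through the cotangent identity of Lemma \ref{lem_norm_equiv1_eq0} (controlled by the maximum angle) together with the no-short-interior-edge condition (P2), and using that the number of summands is bounded by $N_P$, then yields the claimed bound uniformly in the aspect ratio. Consequently $\|\bfd_h\|^2_{\bfH(\curl;P)}=\|\bfd_h\|^2_{L^2(P)}+\|\curl\bfd_h\|^2_{L^2(P)}\lesssim h_P^2|\curl\bfu|^2_{H^1({\rm Conv}(P))}$, and adding the two contributions gives \eqref{lem_auxi_interp_eq0}.
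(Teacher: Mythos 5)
Your proposal is correct and follows the same skeleton as the paper's proof: both pass through the elementwise N\'ed\'elec interpolant $I_h\bfu$ on $\mathcal{T}_h(P)$ (your $\tilde{\bfu}_h$), bound $\bfu-I_h\bfu$ by Lemma \ref{lem_maxangle} under (P1), and control the remainder $\bfd_h=I_h\bfu-I_P\bfu\in\mathcal{ND}_{h,0}(\mathcal{T}_h(P))$ via the circulation identity $|e|\,\bfd_h\cdot\bs t_e=\pm\int_{P_e}\curl\bfd_h$ on interior edges together with (P2). Two local variations are worth pointing out. For the curl error, the paper's argument is a one-liner: $\curl I_P\bfu=\Pi_P\curl\bfu$ is the best constant approximation of $\curl\bfu$ on $P$, so the bound follows directly from the Payne--Weinberger inequality on ${\rm Conv}(P)$; your Galerkin-orthogonality/Pythagoras/variance route reaches the same estimate but is longer (and, note, the orthogonality you invoke already implies $\curl I_P\bfu$ is the mean of $\curl I_h\bfu$, i.e.\ of $\curl\bfu$). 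For the $L^2$ part, your caveat that Lemma \ref{lem_Poincare} cannot be quoted directly is only half right: the paper does quote it, but \emph{triangle by triangle} --- on a single triangle $\curl\bfd_h$ is a genuine constant, so the lemma applies with constants controlled by the maximum angle --- and then inserts the interior-edge circulation bound, which is exactly the mechanism you re-derive under the name of a global discrete Friedrichs inequality. Your tree-peeling formulation is more elaborate than necessary, but the ingredients you name (the cotangent identity of Lemma \ref{lem_norm_equiv1_eq0}, $h_P\lesssim|e|$ from (P2), boundedness of $N_P$) are the right ones and deliver the same $\mathcal{O}(h_P^2)$ supercloseness, so there is no gap.
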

\begin{proof}
The estimate for the semi-curl norm is standard since $\text{curl} \, I_P \bs u$ is the $L^2$ projection of $\curl \bfu$ on $P$. Then 
\begin{align*}
\| \curl \bs u - \curl I_P \bs u \|_{L^2(P)} &=  \| \curl \bs u - \Pi_{P}\curl \bs u \|_{L^2(P)}\\
&\leq \| \curl \bs u - \Pi_{{\rm Conv}(P)}\curl \bs u \|_{L^2(P)}\\
&\leq \| \curl \bs u - \Pi_{{\rm Conv}(P)}\curl \bs u \|_{L^2({\rm Conv}(P))}\\
& \leq \frac{h_P}{\pi}\| \bfu \|_{\bfH^1(\curl;{\rm Conv}(P))},
\end{align*}
where the last step is the Poincar\'e inequality over convex domains \cite{Payne;Weinberger:1960optimal}.

Let $I_h$ be the edge interpolation to \revision{$\mathcal{ND}_h(\mathcal T_h(P))$}, i.e., the standard edge finite element space on mesh $\mathcal T_h(P)$. By the maximum angle condition in \hyperref[asp:polygon]{(P1)} and Lemma \ref{lem_maxangle}, we have $\| \bs u - I_h\bs u\|_{L^2(P)} \lesssim h_P\| \bfu \|_{\bfH^1(\curl;P)}$. Then it suffices to estimate the difference $\| I_P \bs u - I_h\bs u\|_{L^2(K)}$ on each triangle $K\in \mathcal T_h(P)$. We apply Lemma \ref{lem_Poincare} on each $K$ to get
$$
\| I_P \bs u - I_h\bs u\|_{L^2(K)} \leq \sum_{e\subset \partial K} h_K^{1/2} \| ( I_P \bs u - I_h\bs u)\cdot \bs t\|_{L^2(e)} + h_K\| \curl ( I_P \bs u - I_h\bs u)\|_{L^2(K)}.
$$
As $(I_P \bs u - I_h\bs u)\cdot \bs t = 0$ for $e\subset \partial P$, we only consider an interior edge $e$. Since $P$ is simple, any interior edge $e$ divides $P$ into two parts. Choose the part with less boundary edges and denoted by $P_e$, then we have the relation
$$
|e| (I_P \bs u - I_h\bs u)\cdot \bs t_e = \int_{\partial P_e} (I_P \bs u - I_h\bs u)\cdot \bs t \dd s= \int_{P_e} \curl ( I_P \bs u - I_h\bs u) \dd x,
$$
which can be used to get 
$$
|e|^{1/2} \| ( I_P \bs u - I_h\bs u)\cdot \bs t\|_{L^2(e)} \leq \| \curl ( I_P \bs u - I_h\bs u)\|_{L^2(P_e)} | P_e|^{1/2}.
$$
Using the triangle inequality and Assumption \hyperref[asp:NSIE]{(P3)}, together with the estimates for $\curl (\bs u-\bs I_P u)$ and $\curl (\bs u-\bs I_h u)$, we conclude for any $K\in \mathcal T_h(P)$
\begin{equation}\label{superclose}
\| I_P \bs u - I_h\bs u\|_{L^2(K)} \lesssim h_P^2\|\curl \bs u\|_{H^1({\rm Conv}(P))}.
\end{equation}
The desired result \eqref{lem_auxi_interp_eq0} then follows from the triangle inequality. 
\end{proof}

\subsection{An interface-aware interpolation}
\label{subsec:interp_def}
The standard local interpolation error estimate in Lemma \ref{lem_maxangle} is for the norm $\|\curl \bs u\|_{H^1(K)}$.  When $K$ is an interface element, in general $\curl \bs u\not\in H^1(K)$ but \revision{is in $H^1(K^+\cup K^{-})$}. 
Instead, we will use the fact $\curl \bs u_E^{\pm}\in H^1(K)$ and define the interpolation by the tangential components of either $\bs u_E^+$ or $\bs u_E^{-}$. The choice of whether to use $\bs u_E^+$ or $\bs u_E^{-}$ depends on the measure of $K^-$ and $K^+$. Note that, in the present situation, since both the triangular elements in $\mathcal{T}^t_h$ and the quadrilateral elements in $\mathcal{T}^q_h$ may have high aspect ratio, the modification in Ref.~\refcite{2012HiptmairLiZou} may not be suitable on anisotropic meshes with interface being present. Therefore, we shall employ a different interface-aware interpolation.

In the following discussion, we only present the results for the elements in the interface-approximated mesh $\mathcal{T}_h$ due to the technical treatment for the interface. 
Nonetheless, we emphasize that most of the results can be generalized based on the estimate of the interpolation errors on general polygons above. 
We shall use $K$ to denote an interface element in $\mathcal{T}^B_h$ that is cut into $K^-_h$ and $K^+_h$ by the edge $\Gamma_h^K$, and without loss of generality, we assume  $K^-_h\in \mathcal{T}^t_h$ and $K^+_h\in \mathcal{T}^q_h$. 
Recall that $K_{\rm int}$ is the portion sandwiched between $\Gamma$ and $\Gamma_h^K$, and we further define $K^{\pm}_{\rm int}:=K^{\pm}_h\cap K_{\rm int}$ which is equivalent to $K^{\pm}_h\cap K^{\mp}$, namely the mismatching subregions of $K^{\pm}_h$ as shown in Figure \ref{fig:interf_interp}. Let $\mathcal{E}_{K}$ be the collection of edges of $K^-_h$ and $K^+_h$ but excluding the edge $\Gamma^K_h$. We define a modified interpolation operator $\tilde{I}_{K}$ on $K\in\mathcal{T}^{Bi}_h$ such that
\begin{subequations}
\label{modify_interp}
\begin{align}
\int_e \tilde{I}_{K} \bfu\cdot\bft \dd s =\int_e \bfu\cdot\bft \dd s, ~~~~~ \forall e\in \mathcal{E}_{K}, \label{modify_interp_1} 
\end{align}
\begin{align}
\int_{\Gamma^K_h} \tilde{I}_{K} \bfu\cdot\bft \dd s =
\begin{cases}
      & \displaystyle\int_{\Gamma^K_h} \bfu^+_E\cdot\bft \dd s, ~~~~ \text{if} ~ |K^+_h| \leq |K^-_h|, 
      \\[3pt]
      & \displaystyle\int_{\Gamma^K_h} \bfu^-_E\cdot\bft \dd s, ~~~~ \text{if} ~ |K^-_h| < |K^+_h|. 
\end{cases}
\end{align}
\end{subequations}
By such a definition, we can always keep the interpolation as the standard one on the subelement with smaller size. So when estimation on the mismatch portion is needed, such as \eqref{lem_est_tri_eq2}, the element size appearing on the denominator will be always larger than $|K|/2$ such that the overall estimate can be controlled. This consideration serves as our key motivation to make this modification. For simplicity, we denote $\bfu_I$ by the global interpolant such that
\begin{equation}
\label{eq:def-uI-modified}
\bfu_I = I_K\bfu ~~~ \text{if} ~K\notin\mathcal{T}^{Bi}_h ~~~~ \text{and} ~~~~ \bfu_I = \tilde{I}_K\bfu ~~~ \text{if} ~K\in\mathcal{T}^{Bi}_h.
\end{equation}
In addition, we use $I_{K^{\pm}_h}\bfu^{\pm}_E$ to denote the canonical interpolation on $K^{\pm}_h$ for Sobolev extensions $\bfu^{\pm}_E$. We emphasize that the modified $\tilde I_K$ serves the purpose for the error analysis and is not needed in the actual computation.

The following two lemmas are presented for general polygons. So we temporarily let $K$ be an interface polygon, and the notation $\Gamma^K$, $\Gamma^K_h$ and $K_{\rm int}$ are all defined in the same manner as their counterparts for triangular interface elements.
For the subelement with larger size, inevitably there is a mismatch on $\Gamma^K_h$, so these results are essential. In the following discussion, with slightly abuse of the notations, we denote $\tilde{h}_K = |\Gamma^K_h|$ which might be much smaller than $h_K$ (see Fig. \ref{fig:interf_interp} (left)), and $\| \bfu_E^{\pm} \|_{L^2(K)}:= \| \bfu_E^- \|_{L^2(K)} + \| \bfu_E^{+} \|_{L^2(K)}$ with trivial generalization to other Sobolev norms. 

\begin{figure}[h]
\centering
   \includegraphics[width=2in]{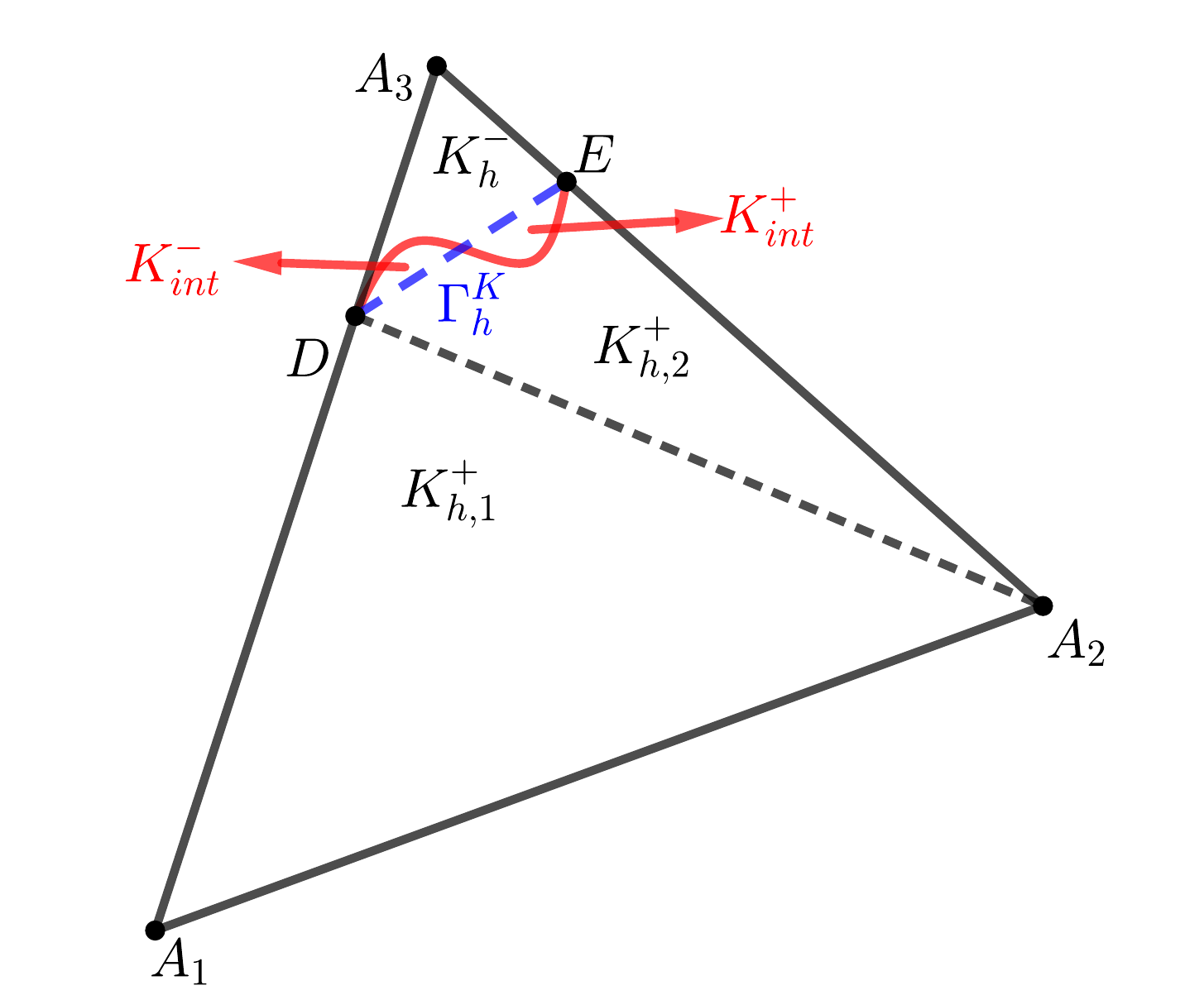}
   \includegraphics[width=2in]{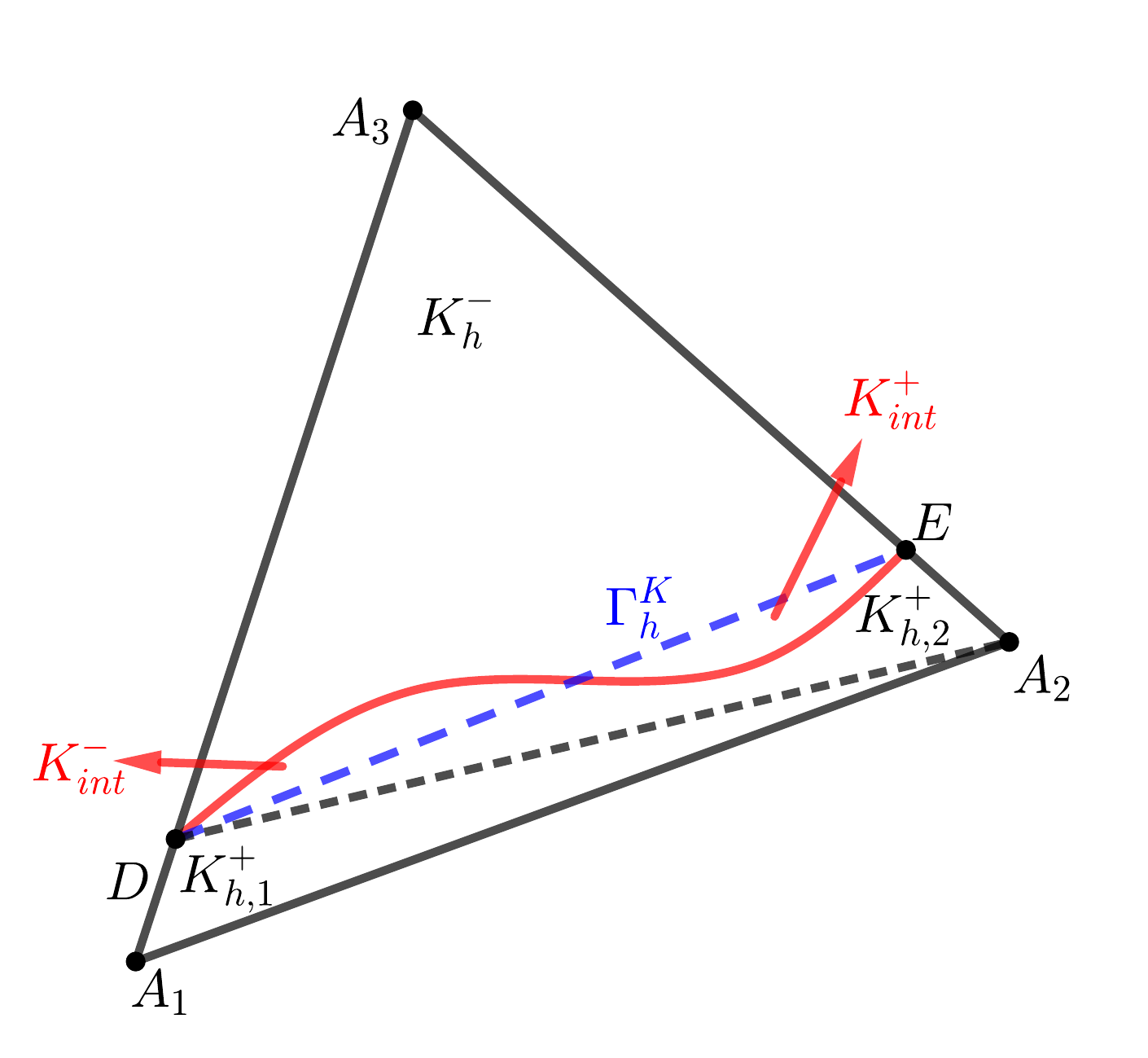}
  \caption{Left: the triangular interface is the smaller one. Right: the quadrilateral element is the smaller one.}
  \label{fig:interf_interp}
\end{figure}

The edge $\Gamma_h^K$ is assumed to be part in $\Omega^+$ and part in $\Omega^-$. As a result, the line integral $\int_{\Gamma_h^K}\bs u_I\cdot \bs t \dd s$ has part of the integrand being $\bs u_E^+\cdot \bs t$ while the other being $\bs u_E^-\cdot \bs t$. Their difference appears one of the key terms to bound the error of the modified interpolant \eqref{modify_interp}, as one adheres to one extension in defining the interpolation. 
\begin{lemma}
\label{lem_est_e}
Let $\bfu\in\bfH^1(\curl;\Omega^-\cup\Omega^+)$. Given an interface polygon $K$, there holds 
\begin{equation}
\label{lem_est_e_eq0}
\verti{ \int_{\Gamma^K_h} (\bfu^+_E - \bfu^-_E)\cdot\bft \dd s } \lesssim \tilde{h}_K^{1/2}h_K \| \curl \bfu^{\pm}_E \|_{L^2(K_{\rm int})}.
\end{equation}
\end{lemma}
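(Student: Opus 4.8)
The plan is to recognize the line integral along $\Gamma^K_h$ as part of a closed contour integral around the thin mismatch region $K_{\rm int}$, convert it into an area integral of $\curl$ via Green's theorem (integration by parts for $\bfH(\curl)$ against the constant test function), and control that area integral by Cauchy--Schwarz together with the $\mathcal O(h_K^2)$ width bound for $K_{\rm int}$. The mechanism that annihilates the contribution coming from the true interface is the tangential continuity \eqref{inter_jc_1}.

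First I would set $\bfw := \bfu^+_E - \bfu^-_E$. By Theorem \ref{thm_ext}, $\bfu^\pm_E\in\bfH^1(\curl;\Omega)\subseteq\bfH^1(\Omega)$, so $\bfw\in\bfH^1(\Omega)$ admits a single-valued trace on $\Gamma$. Since $\bfu^+_E=\bfu$ in $\Omega^+$ and $\bfu^-_E=\bfu$ in $\Omega^-$, the traces of $\bfu^\pm_E$ on $\Gamma$ agree with the one-sided traces $\bfu^\pm$, and \eqref{inter_jc_1} yields
\[
\bfw\cdot\bft_\Gamma = (\bfu^+-\bfu^-)\cdot\bft_\Gamma = 0 \quad\text{on }\Gamma,
\]
where $\bft_\Gamma$ is the unit tangent of $\Gamma$. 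Next, writing $\Gamma^K:=\Gamma\cap K$, I note that $\Gamma^K_h$ and $\Gamma^K$ share the two points where $\Gamma$ meets $\partial K$, so $\partial K_{\rm int}=\Gamma^K_h\cup\Gamma^K$ (up to orientation) is Lipschitz. Integrating $\bfw\in\bfH(\curl;K_{\rm int})$ by parts against the constant $1$ gives
\[
\int_{K_{\rm int}}\curl\bfw\dd x = \int_{\partial K_{\rm int}}\bfw\cdot\bft\dd s = \pm\int_{\Gamma^K_h}\bfw\cdot\bft\dd s \mp \int_{\Gamma^K}\bfw\cdot\bft_\Gamma\dd s,
\]
and the last integral vanishes by the tangential continuity above, so that $\bigl|\int_{\Gamma^K_h}\bfw\cdot\bft\dd s\bigr| = \bigl|\int_{K_{\rm int}}\curl\bfw\dd x\bigr|$.

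Finally I would estimate the right-hand side. Using $\curl\bfw=\curl\bfu^+_E-\curl\bfu^-_E$, the triangle inequality, and the convention $\|\curl\bfu^\pm_E\|_{L^2(K_{\rm int})}=\|\curl\bfu^+_E\|_{L^2(K_{\rm int})}+\|\curl\bfu^-_E\|_{L^2(K_{\rm int})}$, Cauchy--Schwarz yields
\[
\left|\int_{K_{\rm int}}\curl\bfw\dd x\right| \le |K_{\rm int}|^{1/2}\,\|\curl\bfw\|_{L^2(K_{\rm int})} \le |K_{\rm int}|^{1/2}\,\|\curl\bfu^\pm_E\|_{L^2(K_{\rm int})}.
\]
Since $K_{\rm int}$ is squeezed between $\Gamma^K_h$, of length $\tilde h_K$, and $\Gamma^K$, whose orthogonal distance to $\Gamma^K_h$ is at most $Ch_K^2$ by Lemma \ref{lemma_interface_flat}, its area obeys $|K_{\rm int}|\lesssim \tilde h_K h_K^2$, hence $|K_{\rm int}|^{1/2}\lesssim\tilde h_K^{1/2}h_K$. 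Combining the displays gives \eqref{lem_est_e_eq0}.

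The main obstacle I anticipate is the geometric bookkeeping: verifying that $\Gamma^K_h$ and $\Gamma^K$ genuinely bound $K_{\rm int}$ with a consistent orientation (so the two arc integrals enter Green's formula with opposite signs), even in the borderline case where $\Gamma$ crosses $\Gamma^K_h$ and $K_{\rm int}$ breaks into several components; and pinning down the area bound $|K_{\rm int}|\lesssim\tilde h_K h_K^2$ rather than $h_K\cdot h_K^2$, as it is precisely the factor $\tilde h_K^{1/2}$ (in place of $h_K^{1/2}$) that sharpens the estimate. The remaining steps are a routine application of integration by parts and Cauchy--Schwarz.
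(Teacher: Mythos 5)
Your proposal is correct and follows essentially the same route as the paper: integrate by parts over $K_{\rm int}$, kill the contribution on the true interface $\Gamma\cap K$ via the tangential jump condition \eqref{inter_jc_1}, and bound the resulting area integral by Cauchy--Schwarz together with $|K_{\rm int}|\lesssim \tilde h_K h_K^2$ from Lemma \ref{lemma_interface_flat}. The paper compresses all of this into one display; you have merely made the trace identification and the orientation bookkeeping explicit.
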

\begin{proof}
Applying integration by parts on $K_{\rm int}$ and using the jump condition in \eqref{inter_jc_1}, we obtain
\begin{equation*}
\begin{split}
\verti{ \int_{\Gamma^K_h} (\bfu^+_E - \bfu^-_E)\cdot\bft \dd s } & = \verti{ \int_{K_{\rm int}} \text{curl}(\bfu^+_E - \bfu^-_E) \dd s } \lesssim |K_{\rm int}|^{1/2}\| \curl \bfu^{\pm}_E \|_{L^2(K_{\rm int})}
\end{split}
\end{equation*}
which yields \eqref{lem_est_e_eq0} since $|K_{\rm int}| \lesssim \tilde{h}_Kh^2_K$ by \eqref{lemma_interface_flat_eq1}.
\end{proof}

The result above can be used to derive the following trace inequality. Recall that there exists a shape regular triangle $B_h^K\subseteq \Omega$ with the base $\Gamma_h^K$ and a height $\mathcal{O}(h_K)$ by Assumption \hyperref[asp:background]{(B)} for all interface elements.
\begin{lemma}
\label{lem_est_e_L2}
Let $\bfu\in\bfH^1(\curl;\Omega^-\cup\Omega^+)$. Given an interface polygon $K$ with $\Gamma_h^K$, there holds
\begin{equation}
\begin{split}
\label{lem_est_e_L2_eq0}
\| (\bfu^+_E - \bfu^-_E)\cdot\bft \|_{L^2(\Gamma^K_h)} & \lesssim h^{1/2}_K\|  \bfu^{\pm}_E \|_{H^1( B_h^K)} + h_{K} \| \curl \bfu^{\pm}_E \|_{L^2(K_{\rm int})}.
\end{split}
\end{equation}
\end{lemma}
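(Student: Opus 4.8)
The plan is to estimate the tangential jump $w := (\bfu^+_E - \bfu^-_E)\cdot\bft$ on the straight segment $\Gamma^K_h$ by splitting it into its edge-average and the remaining fluctuation, and to control each piece by one of the two terms on the right-hand side of \eqref{lem_est_e_L2_eq0}. Writing $\bar w := \tilde h_K^{-1}\int_{\Gamma^K_h} w \dd s$ for the mean of $w$ over $\Gamma^K_h$, I would first record the triangle inequality $\| w\|_{L^2(\Gamma^K_h)} \le \tilde h_K^{1/2}\verti{\bar w} + \| w - \bar w\|_{L^2(\Gamma^K_h)}$, where $\|\bar w\|_{L^2(\Gamma^K_h)} = \tilde h_K^{1/2}\verti{\bar w}$. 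The first summand is exactly the quantity already controlled: since $\tilde h_K^{1/2}\verti{\bar w} = \tilde h_K^{-1/2}\verti{\int_{\Gamma^K_h}(\bfu^+_E - \bfu^-_E)\cdot\bft\dd s}$, Lemma \ref{lem_est_e} immediately gives $\tilde h_K^{1/2}\verti{\bar w} \lesssim h_K \| \curl \bfu^\pm_E\|_{L^2(K_{\rm int})}$, which is precisely the second term in \eqref{lem_est_e_L2_eq0}.

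The fluctuation term is where the trace must be lifted into the interior through $B_h^K$. The key observation I would exploit is that $\Gamma^K_h$ is a straight segment, so $\bft$ is a fixed constant vector along it; hence, writing $v := \bfu^+_E - \bfu^-_E$ and $\bar v$ for the average of $v$ over $\Gamma^K_h$, one has $\bar w = \bar v\cdot\bft$ and therefore $w - \bar w = (v - \bar v)\cdot\bft$, whence $\verti{w - \bar w} \le \verti{v - \bar v}$ pointwise. Note $v \in \bfH^1(B_h^K)$ by Theorem \ref{thm_ext}, since $\bfu^\pm_E \in \bfH^1(\curl;\Omega)\subset \bfH^1(\Omega)$. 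It then suffices to establish the scalar trace--Poincar\'e estimate $\| v - \bar v\|_{L^2(\Gamma^K_h)} \lesssim \tilde h_K^{1/2}\|\nabla v\|_{L^2(B_h^K)}$ and to combine it with $\tilde h_K\le h_K$ and $\|\nabla v\|_{L^2(B_h^K)} \le \| \bfu^\pm_E\|_{H^1(B_h^K)}$.

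For the trace--Poincar\'e estimate I would pass to a reference triangle: because $B_h^K$ is shape regular with $\Gamma^K_h$ as its base, its diameter is comparable to $\tilde h_K = |\Gamma^K_h|$, and a scaling argument reduces the claim to the fixed inequality $\|\hat v - \overline{\hat v}\|_{L^2(\hat\gamma)} \lesssim \|\nabla\hat v\|_{L^2(\hat B)}$ on a reference configuration. The latter follows from a Deny--Lions (Bramble--Hilbert) quotient argument: the left-hand functional vanishes on constant fields, the seminorm $\|\nabla\hat v\|$ has exactly the constants as its kernel, and the trace embedding $H^1(\hat B)\hookrightarrow L^2(\hat\gamma)$ is compact. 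Adding the mean and fluctuation bounds then yields \eqref{lem_est_e_L2_eq0}. The main obstacle is conceptual rather than computational: a direct trace inequality for $v$ on $\Gamma^K_h$ would leave an uncontrollable term $h_K^{-1/2}\|v\|_{L^2(B_h^K)}$, because $\bfu^+_E$ and $\bfu^-_E$ need not be close in $B_h^K$ (which straddles the interface). It is precisely the subtraction of the edge-mean---legitimate here thanks to the constancy of $\bft$ on the straight segment---that reduces the fluctuation to the gradient seminorm, while Lemma \ref{lem_est_e} simultaneously absorbs the mean into the thin-strip curl term. Care must also be taken that the trace--Poincar\'e constant depends only on the uniform shape-regularity of $B_h^K$, so that all hidden constants remain independent of the interface location within $K$ and of $h_K$.
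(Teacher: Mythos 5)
Your proof is correct and follows essentially the same route as the paper's: both split the jump on $\Gamma^K_h$ into its edge average, which is absorbed into the curl term via Lemma \ref{lem_est_e} exactly as you do, and the zero-mean fluctuation, which is lifted into $B_h^K$ by a trace--Poincar\'e argument. The only cosmetic difference is that the paper bounds the fluctuation by an explicit anisotropic trace inequality on $B_h^K$ (whose height is $\mathcal{O}(h_K)$, giving the factor $h_K^{1/2}$ rather than your $\tilde{h}_K^{1/2}$) combined with a Poincar\'e inequality with vanishing edge average, whereas you scale to a reference configuration and invoke Deny--Lions; if you want the sharper factor $\tilde{h}_K^{1/2}$ you should run your compactness argument on a shape-regular sub-triangle of $B_h^K$ with base $\Gamma^K_h$, since the paper's $B_h^K$ need not have diameter comparable to $\tilde{h}_K$, but either way the stated estimate follows since $\tilde{h}_K\le h_K$.
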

\begin{proof}
Apply the $L^2$-projection on $\Gamma^K_h$ to obtain
\begin{equation}
\begin{split}
\label{lem_est_e_L2_eq1}
 \| (\bfu^-_E -  \bfu^+_E )\cdot\bft \|_{L^2(\Gamma^K_h)} & \le \underbrace{ \| (\bfu^-_E -  \bfu^+_E )\cdot\bft - \Pi_{\Gamma^K_h}((\bfu^-_E -  \bfu^+_E )\cdot\bft) \|_{L^2(\Gamma^K_h)} }_{({\rm I})} \\
 & + \underbrace{ \| \Pi_{\Gamma^K_h}((\bfu^-_E -  \bfu^+_E )\cdot\bft) \|_{L^2(\Gamma^K_h)}}_{({\rm II})}.
\end{split}
\end{equation}
Since $\bft$ is a constant vector, and $\Gamma^K_h$ with $B_h^K$ satisfies the height condition, \revision{i.e., the height $l$ of $B_h^K$ supporting $\Gamma^K_h$ is $\mathcal{O}(h_K)$}, by the trace inequality (Lemma 6.3 in Ref.~\refcite{2018CaoChen}) and the Poincar\'e inequality with average zero on a boundary edge (Lemma 6.11 in Ref.~\refcite{2018CaoChen}), we have
\begin{equation}
\begin{split}
\label{lem_est_e_L2_eq2}
({\rm I}) & \lesssim l^{-1/2} \| (\bfu^-_E -  \bfu^+_E )\cdot\bft - \Pi_{\Gamma^K_h}((\bfu^-_E -  \bfu^+_E )\cdot\bft) \|_{L^2(B_h^K)} \\
& +  (l^{1/2} + l^{-1/2} \tilde{h}_K ) | (\bfu^-_E -  \bfu^+_E )\cdot\bft |_{H^1(B_h^K)} \\
& \lesssim   h_K^{1/2} | (\bfu^-_E -  \bfu^+_E )\cdot\bft |_{H^1(B_h^K)}.%
\end{split}
\end{equation}
For $({\rm II})$, by Lemma \ref{lem_est_e}, we have 
\begin{equation}
\begin{split}
\label{lem_est_e_L2_eq3}
({\rm II}) &= \tilde{h}_K^{1/2} \verti{ \Pi_{\Gamma^K_h}((\bfu^-_E -  \bfu^+_E )\cdot\bft) } \\
&=  \tilde{h}_K^{-1/2}  \verti{ \int_{\tilde{e}}(\bfu^-_E -  \bfu^+_E )\cdot\bft \dd s }  \lesssim h_{K} \| \curl \bfu^{\pm}_E \|_{L^2(K_{\rm int})}.
\end{split}
\end{equation}
Putting \eqref{lem_est_e_L2_eq2} and \eqref{lem_est_e_L2_eq3} back into \eqref{lem_est_e_L2_eq1} finishes the proof.
\end{proof}

\subsection{Estimate on interface elements}
Now we proceed to estimate the interpolation errors $\bfu - \bfu_I$ on interface elements for the modified interpolation.

\begin{lemma}
\label{lem_est_interface}
Let $\bfu\in\bfH^1(\curl;\Omega^-\cup\Omega^+)$ and $\bfu_{I}$ be the interpolant defined in \eqref{eq:def-uI-modified}. On each interface element $K\in \mathcal{T}^{Bi}_h$, there holds
\begin{align}
\label{lem_est_tri_eq0}
\| \bfu - \bfu_I \|_{\bfH(\curl;K)} \lesssim & \, h_{K}\| \bfu^\pm_E \|_{\bfH^1(\curl;K\cup B_h^K)} +  \| \bfu^{\pm}_E \|_{\bfH(\curl; K_{\rm int})}.
\end{align}
\end{lemma}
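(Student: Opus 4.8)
The plan is to estimate $\bfu-\bfu_I$ separately on the triangular piece $K^-_h\in\mathcal T^t_h$ and the quadrilateral piece $K^+_h\in\mathcal T^q_h$. Since $\bfu\in\bfH(\curl;\Omega)$ and the two local interpolants share the same tangential degree of freedom on $\Gamma^K_h$, the difference $\bfu-\bfu_I$ has continuous tangential trace across $\Gamma^K_h$, so $\|\bfu-\bfu_I\|_{\bfH(\curl;K)}^2=\|\bfu-\bfu_I\|_{\bfH(\curl;K^-_h)}^2+\|\bfu-\bfu_I\|_{\bfH(\curl;K^+_h)}^2$. I treat the case $|K^+_h|\le|K^-_h|$, in which \eqref{modify_interp} selects $\bfu^+_E$ on $\Gamma^K_h$; the opposite case is symmetric with the roles of $+$ and $-$ exchanged.

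I handle the smaller piece $K^+_h$ first. Because $\Gamma$ meets $\partial K$ only at the two endpoints of $\Gamma^K_h$, every edge of $K^+_h$ other than $\Gamma^K_h$ lies in $\overline{\Omega^+}$, where $\bfu=\bfu^+_E$; hence the degrees of freedom of $\tilde I_K\bfu$ agree with those of the canonical interpolant $I_{K^+_h}\bfu^+_E$ on all edges, so $\tilde I_K\bfu=I_{K^+_h}\bfu^+_E$ there. Splitting $\bfu-\bfu_I=(\bfu-\bfu^+_E)+(\bfu^+_E-I_{K^+_h}\bfu^+_E)$, the first term is supported on $K^+_{\rm int}=K^+_h\cap\Omega^-$ and equals $\bfu^-_E-\bfu^+_E$ there, while the second is the standard interpolation error bounded by Lemma \ref{lem_auxi_interp}. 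This gives $\|\bfu-\bfu_I\|_{\bfH(\curl;K^+_h)}\lesssim\|\bfu^\pm_E\|_{\bfH(\curl;K_{\rm int})}+h_K\|\bfu^+_E\|_{\bfH^1(\curl;K)}$.

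On the larger piece $K^-_h$ I write $\bfu-\bfu_I=(\bfu-\bfu^-_E)+(\bfu^-_E-I_{K^-_h}\bfu^-_E)+\psi$ with $\psi:=I_{K^-_h}\bfu^-_E-\tilde I_K\bfu\in\mathcal{ND}_h(K^-_h)$. I treat the first two terms as before, using Lemma \ref{lem_maxangle} (the maximum angle of $K^-_h$ being bounded via Lemma \ref{max_angle}) for the interpolation error $h_K\|\bfu^-_E\|_{\bfH^1(\curl;K^-_h)}$. The crux is $\psi$: its tangential degrees of freedom vanish on the non-interface edges (which lie in $\overline{\Omega^-}$, where $\bfu=\bfu^-_E$) and equal $c:=\int_{\Gamma^K_h}(\bfu^-_E-\bfu^+_E)\cdot\bft\dd s$ on $\Gamma^K_h$, so $\psi\cdot\bft=c/\tilde h_K$ on $\Gamma^K_h$ and vanishes elsewhere on $\partial K^-_h$, while $\curl\psi=c/|K^-_h|$. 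Applying the anisotropy-robust Poincar\'e inequality of Lemma \ref{lem_Poincare} on $K^-_h$, namely $\|\psi\|_{L^2(K^-_h)}\lesssim h_K^{1/2}\|\psi\cdot\bft\|_{L^2(\Gamma^K_h)}+h_K\|\curl\psi\|_{L^2(K^-_h)}$, I bound the curl through Lemma \ref{lem_est_e} and the boundary trace through Lemma \ref{lem_est_e_L2}. The decisive point is the measure estimate $|K^-_h|\ge|K|/2\gtrsim h_K^2$, valid precisely because the size-based rule in \eqref{modify_interp} puts the mismatch on the larger subelement; this controls the factor $1/|K^-_h|^{1/2}$ appearing in $\curl\psi$ and yields $\|\psi\|_{\bfH(\curl;K^-_h)}\lesssim h_K\|\bfu^\pm_E\|_{\bfH^1(\curl;B_h^K)}+\|\bfu^\pm_E\|_{\bfH(\curl;K_{\rm int})}$. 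Collecting the three contributions on $K^-_h$ and adding the $K^+_h$ estimate gives \eqref{lem_est_tri_eq0}.

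The main obstacle is exactly the reconstructed discrepancy $\psi$, which recovers a N\'ed\'elec function on a possibly very thin element out of a single tangential datum on the short edge $\Gamma^K_h$ and therefore carries the dangerous factor $|K^-_h|^{-1/2}$. A naive estimate on a sliver would lose powers of $h_K$; the argument is rescued by two ingredients, (i) the lower bound $|K^-_h|\gtrsim h_K^2$ guaranteed by the size-based modification \eqref{modify_interp}, and (ii) the Poincar\'e inequality of Lemma \ref{lem_Poincare}, which bounds $\|\psi\|_{L^2}$ by tangential traces and curl with constants depending only on the maximum angle, hence uniformly in the aspect ratio of $K^-_h$.
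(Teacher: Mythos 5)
Your proposal is correct and follows essentially the same route as the paper: the same three-term decomposition $(\bfu-\bfu^{\pm}_E)+(\bfu^{\pm}_E-I\bfu^{\pm}_E)+\psi$ on the subelement carrying the mismatch, with the discrepancy $\psi$ controlled via the curl formula \eqref{curl_vt}, Lemma \ref{lem_est_e} for $\int_{\Gamma^K_h}(\bfu^+_E-\bfu^-_E)\cdot\bft\dd s$, Lemma \ref{lem_est_e_L2} for the trace, the anisotropy-robust Poincar\'e inequality of Lemma \ref{lem_Poincare}, and the decisive lower bound $|K^{\mp}_h|\gtrsim h_K^2$ supplied by the size-based rule in \eqref{modify_interp}. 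You spell out the estimate on the smaller subelement (where the discrepancy term vanishes) slightly more explicitly than the paper does, but the substance is identical.
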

\begin{proof}
Recall that $K = K_h^{-}\cup K_h^+$. Without loss of generality, we focus the proof on $K_h^-$ as the estimate on the other part follows the result on  $K_h^-$ using a similar argument as the one in Lemma \ref{lem_auxi_interp}. By the triangle inequality, we have 
\begin{align}
\| \bfu - \bfu_I \|_{\bfH(\text{curl};K^-_h)}  \le & \,\| \bfu - \bfu^-_E \|_{\bfH(\text{curl};K^-_h)} \label{interface_interp_1}\tag{{\rm I}}  
\\
& + \| \bfu^-_E - I_{K^-_h} \bfu^-_E \|_{\bfH(\text{curl};K^-_h )} \label{interface_interp_2}\tag{{\rm II}} 
 \\
&+ \| I_{K^-_h} \bfu^-_E - \bfu_I \|_{\bfH(\text{curl};K^-_h )}. \label{interface_interp_3}\tag{{\rm III}} 
\end{align}
The first term \eqref{interface_interp_1} can be bounded by
\begin{equation}
\begin{split}
\label{lem_est_tri_eq1_1}
\| \bfu - \bfu^-_E \|_{\bfH(\text{curl};K^-_h)} & = \| \bfu_E^+ - \bfu^-_E \|_{\bfH(\text{curl};K^-_{\rm int})} \leq \| \bfu^{\pm}_E \|_{\bfH(\text{curl};K_{\rm int})}.
\end{split}
\end{equation}
The second term \eqref{interface_interp_2}'s estimate directly follows from Lemma \ref{lem_maxangle} since the triangular element $K^{-}_h$ satisfies the maximum angle condition. The third term \eqref{interface_interp_3} simply vanishes if $|K^-_h| \le | K^+_h |$, therefore the estimate for this term is only needed when $|K^-_h| > | K^+_h |$ and consequently $|K^-_h| \geq Ch_K^2$. For simplicity, we let $\bfw_h= I_{K^-_h} \bfu^-_E - \bfu_I$, and note that $\bfw_h\cdot\bft$ vanishes on the edges of $K^-_h$ except $\Gamma^K_h$. Using integration by parts and Lemma \ref{lem_est_e}, we have
\begin{equation}
\begin{aligned}
\label{lem_est_tri_eq2}
\| \text{curl} \, \bfw_h \|_{L^2(K^-_h)} &= |K^-_h|^{1/2} \verti{ \text{curl} \, \bfw_h }
= \frac{1}{|K^-_h|^{1/2} } \verti{ \int_{\Gamma^K_h} \bfw_h \cdot\bft \dd s } \\
& \lesssim \frac{1}{h_{K}} \verti{ \int_{\Gamma^K_h} ( \bfu^-_E -  \bfu^+_E) \cdot\bft \dd s } \lesssim h^{1/2}_{K} \| \curl \bfu^{\pm}_E \|_{L^2(K_{\rm int})}.
\end{aligned}
\end{equation}
To estimate the $L^2$-norm, we use inequality \eqref{lem_norm_equiv2_eq0} in Lemma \ref{lem_Poincare} to conclude
\begin{equation}
\label{lem_est_tri_eq3}
\| \bfw_h \|_{L^2(K^-_h)} \lesssim h^{1/2}_{K} \| \bs w_h \cdot \bs t\|_{L^2(\Gamma_h^K)} + h_K \| \curl \bs w_h \|_{L^2(K^-_h)}.
\end{equation}
Lastly, using Lemma \ref{lem_est_e_L2} and the bound of $\| \text{curl} \, \bfw_h \|_{L^2(K^-_h)} $ finishes the proof. 

\end{proof}

The estimates on non-interface elements in the background mesh $\mathcal{T}^B_h$ are standard.  These estimates together with the Sobolev inequality in Lemma \ref{lem_delta} and Theorem \ref{thm_ext} on the extension yield the global interpolation estimate.

\begin{theorem}
\label{thm_interp}
Let $\bfu\in\bfH^1(\curl;\Omega^-\cup\Omega^+)$, then there holds
\begin{equation}
\label{thm_interp_eq0}
\| \bfu -  \bfu_I  \|_{H(\curl; \Omega)} \lesssim h \|  \bfu \|_{\bfH^1(\curl;\Omega^-\cup\Omega^+)}. 
\end{equation}
\end{theorem}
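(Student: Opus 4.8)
The plan is to split the global error elementwise by type and assemble the local estimates already proved. I would begin from
\[
\| \bfu - \bfu_I \|^2_{\bfH(\curl;\Omega)} = \sum_{K\notin\mathcal{T}^{Bi}_h} \| \bfu - \bfu_I \|^2_{\bfH(\curl;K)} + \sum_{K\in\mathcal{T}^{Bi}_h} \| \bfu - \bfu_I \|^2_{\bfH(\curl;K)},
\]
and handle the two sums independently. On each non-interface element $K$ the solution restricts to an $\bfH^1(\curl;K)$ function supported in a single subdomain, $\bfu_I = I_K\bfu$ is the canonical edge interpolant on a shape-regular triangle, and the standard estimate \eqref{interp_1_err} gives $\| \bfu - I_K\bfu \|_{\bfH(\curl;K)} \lesssim h_K \| \bfu \|_{\bfH^1(\curl;K)}$. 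Squaring and summing over these disjoint elements yields a bound of order $h^2 \| \bfu \|^2_{\bfH^1(\curl;\Omega^-\cup\Omega^+)}$.

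For the interface sum I would invoke Lemma \ref{lem_est_interface} on each $K\in\mathcal{T}^{Bi}_h$, which controls the local error by $h_K\| \bfu^\pm_E \|_{\bfH^1(\curl;K\cup B_h^K)} + \| \bfu^\pm_E \|_{\bfH(\curl;K_{\rm int})}$. The first term is routine: using $h_K\lesssim h$, the containment $B_h^K\subseteq K$ for the background triangular mesh (and the assumed finite overlap of $\{B_h^K\}$ in the general case), together with the extension bound $\| \bfu^\pm_E \|_{\bfH^1(\curl;\Omega)} \le C_E \| \bfu \|_{\bfH^1(\curl;\Omega^\pm)}$ from Theorem \ref{thm_ext}, the squares sum to order $h^2\| \bfu \|^2_{\bfH^1(\curl;\Omega^-\cup\Omega^+)}$.

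The main obstacle is the mismatch term $\sum_K \| \bfu^\pm_E \|^2_{\bfH(\curl;K_{\rm int})}$, which carries no explicit $h_K$ factor and is thus $\mathcal{O}(1)$ on each element, so the entire gain must come from the thinness of the interface strip. I would exploit that the mismatch regions are essentially disjoint and, by \eqref{delta_strip_2}, satisfy $\bigcup_K K_{\rm int} \subseteq S_\delta$ with $\delta \le C_\Gamma h^2$, whence $\sum_K \| \bfu^\pm_E \|^2_{\bfH(\curl;K_{\rm int})} \lesssim \| \bfu^\pm_E \|^2_{L^2(S_\delta)} + \| \curl\bfu^\pm_E \|^2_{L^2(S_\delta)}$. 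Since Theorem \ref{thm_ext} guarantees that both $\bfu^\pm_E$ and $\curl\bfu^\pm_E$ lie in $H^1(\Omega)$, applying the strip estimate Lemma \ref{lem_delta} on $S^\pm_\delta$ converts each term into a factor $\delta$ times an $H^1$-norm, giving $\| \bfu^\pm_E \|^2_{L^2(S_\delta)} \lesssim \delta\,\| \bfu^\pm_E \|^2_{\bfH^1(\curl;\Omega)}$ and likewise for the curl, which is in turn $\lesssim h^2\| \bfu \|^2_{\bfH^1(\curl;\Omega^-\cup\Omega^+)}$ by the extension bound. Collecting the three contributions and taking square roots yields \eqref{thm_interp_eq0}. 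The delicate point throughout is precisely this use of the $\mathcal{O}(h^2)$ strip width to absorb the $\mathcal{O}(1)$ mismatch energy, which hinges simultaneously on the geometric flatness estimate \eqref{lemma_interface_flat_eq1} and the $\bfH^1(\curl)$-regularity of the Sobolev extensions.
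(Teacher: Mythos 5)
Your proposal is correct and follows essentially the same route as the paper: elementwise splitting, the standard estimate \eqref{interp_1_err} on non-interface elements, Lemma \ref{lem_est_interface} with finite overlap of the $B_h^K$ on interface elements, and absorption of the $\mathcal{O}(1)$ mismatch term via the $\delta$-strip containment \eqref{delta_strip_2}, Lemma \ref{lem_delta}, and the extension bound of Theorem \ref{thm_ext}. The paper's proof states these last steps in one line; your write-up simply makes that strip argument explicit.
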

\begin{proof}
For non-interface elements, the estimate is standard as well. For interface element $K$, we then use Lemma \ref{lem_est_interface}:
\begin{align*}
\sum_{K\in\mathcal{T}^{Bi}_h }\| \bfu - \bfu_I \|_{\bfH(\curl;K)}^2 \lesssim & \sum_{K\in\mathcal{T}^{Bi}_h } \, h_{K}^2\| \bfu^\pm_E \|_{\bfH^1(\curl;K\cup B_h^K)}^2 +  \| \bfu^{\pm}_E \|_{\bfH(\curl; K_{\rm int})}^2,\\
\lesssim & \, h^2 \| \bfu^\pm_E \|_{\bfH^1(\curl;\Omega^-\cup\Omega^+)}^2 +  \| \bfu^{\pm}_E \|_{\bfH(\curl; \cup_{K\in \mathcal{T}^{Bi}_h }K_{\rm int})}^2,
\end{align*}
in which the second step we use the fact $\Gamma_h$ is uniform Lipschitz so that the overlapping portions of triangles $B_h^K$ for every interface element $K$ are bounded. The desired estimate follows from Theorem \ref{thm_ext} and estimate \eqref{lem_delta_eq0}.
\end{proof}

\subsection{Estimate on the stabilization}
In this subsection, we move back to the mesh $\mathcal{T}_h$ consisting of triangular and quadrilateral elements cut from the background triangular mesh. 
On the quadrilateral $K_h^+$, a stabilization term is present. 
In this section, such terms that help to estimate the stabilization term in the error bound \eqref{thm_error_eqn_eq0} are estimated, including
\begin{equation*}
\label{stab_explain_1}
\| (\bfu - \bfu_I)\cdot\bft \|_{L^2(\partial K)}, ~~ \| \Pi_{K^q}(\bfu - \bfu_I)\cdot\bft \|_{L^2(\partial K)}, ~~ \| (\bfu - \Pi_{K^q} \bfu_I)\cdot\bft \|_{L^2(\partial K)}.
\end{equation*}
The main difficulty is on the second term above. Note that the common and natural approach to estimate the edge terms is to apply the trace inequality. Using Figure \ref{fig:interf_interp} as an example, it indeed works for the edges $A_1D$ and $A_2E$ as they support an $\mathcal{O}(h_K)$ height within the triangle. 
The major difficulty arises for edges like $A_1A_2$ and $\Gamma^K_h = DE$, due to a possibly degenerating height. The core idea of our approach is to employ a constructive proof, without relying on the trace inequality, to control the edge terms by using $\Pi_{K^q}(\bfu - \bfu_I)\cdot\bft$ being a constant for lifting and applying the definition of projection \eqref{compute_proj}. In the coming proofs, $\bfxi_h := \bfu -\bfu_I$ for simplicity. 
\begin{lemma}
\label{lem_est_qua_2}
Let $\bfu\in\bfH^1(\curl;\Omega^-\cup\Omega^+)$. Given each interface element $K\in \mathcal{T}^{Bi}_h$, there holds
\begin{equation}
\label{lem_est_qua_2_eq0}
\| (\bfu -  \bfu_I)\cdot\bft \|_{L^2(\partial K^+_h)} \lesssim h^{1/2}_K  \|  \bfu^{\pm}_E \|_{H^1( K)} + h_{K} \| \curl \bfu^{\pm}_E \|_{L^2(K_{\rm int})}.
\end{equation}
\end{lemma}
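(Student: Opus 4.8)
The plan is to treat $\partial K^+_h$ edge by edge, separating the non-interface edges $e\in\mathcal{E}_{K}$ from the interface edge $\Gamma^K_h$, and to exploit that on each edge the lowest-order interpolant $\bfu_I\cdot\bft$ reduces to the edge-average of the relevant tangential trace. The crucial structural observation is that the two types of edges behave very differently, and only $\Gamma^K_h$ is genuinely hard.

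First, for a non-interface edge $e\subset\partial K^+_h$: by Assumption~(\textbf{A}) the interface $\Gamma$ meets $\partial K$ only at the two cut points, so the three edges of $K^+_h$ lying on $\partial K$ form an arc contained in $\overline{\Omega^+}$. Hence $\bfu=\bfu^+=\bfu^+_E$ a.e.\ on such an $e$ and \emph{no} interface mismatch is incurred. Since $\bfu_I\cdot\bft_e$ is the edge average of $\bfu^+_E\cdot\bft_e$, the error is the mean-zero part $\bfu^+_E\cdot\bft-\overline{\bfu^+_E\cdot\bft}^{\,e}$. As $e$ is a subedge of the shape-regular triangle $K$ and therefore supports an $\mathcal{O}(h_K)$ height, a scaled trace inequality followed by the Poincar\'e inequality on $K$ gives $\|(\bfu-\bfu_I)\cdot\bft\|_{L^2(e)}\lesssim h_K^{1/2}\,\|\bfu^+_E\|_{H^1(K)}$.

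The interface edge $\Gamma^K_h$ is the delicate one, because its length $\tilde h_K$ may be far smaller than $h_K$ so that a trace inequality posed on the thin element $K^+_h$ would produce an uncontrolled negative power of $\tilde h_K$. Here $\bfu_I\cdot\bft=\overline{\bfu^s_E\cdot\bft}^{\,\Gamma^K_h}$ with $s\in\{+,-\}$ the extension selected by \eqref{modify_interp}, and I would split
\[
(\bfu-\bfu_I)\cdot\bft=\big(\bfu\cdot\bft-\bfu^s_E\cdot\bft\big)+\big(\bfu^s_E\cdot\bft-\overline{\bfu^s_E\cdot\bft}^{\,\Gamma^K_h}\big).
\]
The first summand is supported only on the part of $\Gamma^K_h$ lying in the subdomain opposite to $s$, where it coincides with $(\bfu^+_E-\bfu^-_E)\cdot\bft$; its $L^2(\Gamma^K_h)$-norm is therefore bounded directly by Lemma \ref{lem_est_e_L2}, which supplies both $h_K^{1/2}\|\bfu^{\pm}_E\|_{H^1(B_h^K)}$ and $h_K\|\curl\bfu^{\pm}_E\|_{L^2(K_{\rm int})}$. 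The second summand is the mean-zero part of the smooth field $\bfu^s_E$; to recover the correct $h_K^{1/2}$ scaling despite the short base, I would lift the trace to the shape-regular triangle $B_h^K$ of height $\mathcal{O}(h_K)$ and invoke the trace inequality (Lemma 6.3 in Ref.~\refcite{2018CaoChen}) together with the zero-average Poincar\'e inequality on a boundary edge (Lemma 6.11 in Ref.~\refcite{2018CaoChen}), exactly as in the estimate of term $(\mathrm{I})$ in the proof of Lemma \ref{lem_est_e_L2}; this yields $h_K^{1/2}\|\bfu^s_E\|_{H^1(B_h^K)}$.

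Collecting these contributions, squaring and summing over all edges of $\partial K^+_h$, and using $B_h^K\subseteq K$ to replace $\|\cdot\|_{H^1(B_h^K)}$ by $\|\cdot\|_{H^1(K)}$, gives \eqref{lem_est_qua_2_eq0}. I expect the main obstacle to be precisely the $\Gamma^K_h$ contribution: the whole point is that routing it through the lifting triangle $B_h^K$ and through the curl-based cancellation already encapsulated in Lemma \ref{lem_est_e_L2} (ultimately Lemma \ref{lem_est_e}) restores the $h_K^{1/2}$ and $h_K$ scalings uniformly in the interface position and the element aspect ratio, whereas any estimate tied to the geometry of $K^+_h$ itself would degenerate as $\tilde h_K/h_K\to 0$.
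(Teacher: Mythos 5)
Your proposal is correct and follows essentially the same route as the paper: mean-zero plus a trace/Poincar\'e argument on the non-interface edges of $K^+_h$ (which lie on $\partial K$ and carry no jump), and on $\Gamma^K_h$ a splitting whose jump part is controlled by Lemma \ref{lem_est_e_L2} (hence Lemma \ref{lem_est_e}) and whose mean-zero part is lifted through $B_h^K$. The only cosmetic difference is that you decompose via the selected extension $\bfu^s_E$ in two terms, whereas the paper routes through $\bfu^+_E$ and $I_{K^+_h}\bfu^+_E$ in three terms, handling the $s=-$ case by a separate constant term bounded with Lemma \ref{lem_est_e}; both bookkeepings are equivalent.
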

\begin{proof}
First, we have on each edge $e\neq\Gamma^K_h\subseteq\partial K^+_h$, $\int_e\bfxi_h \dd s =0 $, and thus
\begin{equation}
\label{lem_est_qua_2_eq1}
\|  \bfxi_h\cdot \bft \|_{L^2(e)} \lesssim h^{1/2}_e |\bfu\cdot\bft|_{H^{1/2}(e)} \le Ch^{1/2}_K \| \bfu^+_E \|_{H^1(K)},
\end{equation}
where we have used the fact that $e$ is one part of an edge of the regular element $K$, 
such that the trace inequality can be applied on this edge and $K$. 
On $\Gamma^K_h$, by the triangle inequality, we have
\begin{equation}
\begin{split}
\label{lem_est_qua_2_eq2}
\|  \bfxi_h\cdot \bft \|_{L^2(\Gamma^K_h)} \le &  \| (\bfu -  \bfu^+_E )\cdot\bft \|_{L^2(\Gamma^K_h)} \\
+ &  \| (\bfu^+_E - I_{K^+_h}\bfu^+_E )\cdot\bft \|_{L^2(\Gamma^K_h)} + \| ( I_{K^+_h}\bfu^+_E - \bfu_I )\cdot\bft \|_{L^2(\Gamma^K_h)}.
\end{split}
\end{equation}
For the first term in \eqref{lem_est_qua_2_eq2}, note that
\begin{equation}
\begin{split}
\label{lem_est_qua_2_eq3}
\| (\bfu -  \bfu^+_E )\cdot\bft \|_{L^2(\Gamma^K_h)}  
\le \| (\bfu^-_E -  \bfu^+_E )\cdot\bft \|_{L^2(\Gamma^K_h)} 
\end{split}
\end{equation}
of which the estimate follows from Lemma \ref{lem_est_e_L2}.
The second term in \eqref{lem_est_qua_2_eq2} 
follows from the argument similar to \eqref{lem_est_qua_2_eq1}. 
The third term in \eqref{lem_est_qua_2_eq2} simply vanishes when 
$|K^+_h|<|K^-_h|$. If $|K^+_h|\ge |K^-_h|$, then the estimate follows from Lemma \ref{lem_est_e}.
\end{proof}

\begin{lemma}
\label{lem_est_qua_3}
Let $\bfu\in\bfH^1(\curl;\Omega^-\cup\Omega^+)$. Given each interface element $K\in \mathcal{T}^{Bi}_h$, there holds
\begin{equation}
\label{lem_est_qua_3_eq0}
\| \Pi_{K^+_h} (\bfu - \bfu_I) \cdot \bft \|_{L^2(\partial K^+_h)} \lesssim h^{1/2}_K  \|  \bfu^{\pm}_E \|_{\bfH^1(\curl; K)} +  h^{-1/2}_{K} \| \bfu^{\pm}_E \|_{\bfH(\curl;K_{\rm int})}.
\end{equation}
\end{lemma}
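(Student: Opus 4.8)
The plan is to avoid the trace inequality on the (possibly very thin) element $K^+_h$ altogether and instead exploit that $\bfc := \Pi_{K^+_h}(\bfu - \bfu_I)$ is a \emph{constant} vector. Since $\bft$ is constant along each edge, $\| \bfc\cdot\bft \|_{L^2(\partial K^+_h)} \le |\partial K^+_h|^{1/2}|\bfc| \lesssim h_K^{1/2}|\bfc|$, so the whole estimate reduces to bounding the single number $|\bfc|$. To this end I would lift $\bfc$ to the linear $\phi_h$ with $\underline{\curl}\,\phi_h=\bfc$ and apply the computable identity \eqref{compute_proj} to $\bfxi_h := \bfu-\bfu_I$, which gives $|K^+_h|\,|\bfc|^2 = (\curl \bfxi_h,\phi_h)_{K^+_h} - (\bfxi_h\cdot\bft,\phi_h)_{\partial K^+_h}$. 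Using $\| \phi_h \|_{L^\infty(K^+_h)}\lesssim h_K|\bfc|$ and $|\partial K^+_h|\lesssim h_K$ then yields the working inequality $\| \bfc\cdot\bft \|_{L^2(\partial K^+_h)} \lesssim h_K^{3/2}|K^+_h|^{-1/2}\| \curl \bfxi_h \|_{L^2(K^+_h)} + h_K^{2}|K^+_h|^{-1}\| \bfxi_h\cdot\bft \|_{L^2(\partial K^+_h)}$.

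It then remains to control the two factors on the right. The tangential term $\| \bfxi_h\cdot\bft \|_{L^2(\partial K^+_h)}$ is exactly what Lemma \ref{lem_est_qua_2} delivers, contributing $h_K^{1/2}\| \bfu^{\pm}_E \|_{H^1(K)}+h_K\| \curl \bfu^{\pm}_E \|_{L^2(K_{\rm int})}$ and, importantly, already bypassing any thin-element trace by working on the regular triangle $K$ and the lifting triangle $B_h^K$. For $\| \curl \bfxi_h \|_{L^2(K^+_h)}$ I would use that $\curl \bfu_I$ is the piecewise constant obtained from the degrees of freedom via \eqref{curl_vt}: writing $\curl \bfxi_h$ as the deviation $\curl \bfu-\overline{\curl \bfu}$ plus the constant $|K^+_h|^{-1}\int_{\Gamma^K_h}(\bfu^{\pm}_E-\bfu)\cdot\bft\dd s$ coming from the modified degree of freedom, the first piece is handled by a Poincar\'e estimate (together with the jump contribution on $K^+_{\rm int}$), and the second by Lemma \ref{lem_est_e}, which supplies the factor $\tilde{h}_K^{1/2}h_K\| \curl \bfu^{\pm}_E \|_{L^2(K_{\rm int})}$.

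The crux is the volume factor $|K^+_h|^{-1}$, which is harmless only when $K^+_h$ is not the smaller of the two cut pieces, and this is precisely where the interface-aware interpolation \eqref{modify_interp} is used. If $|K^+_h|\ge|K^-_h|$, then $|K^+_h|\ge|K|/2\gtrsim h_K^2$, so $h_K^2|K^+_h|^{-1}\lesssim1$ and $h_K^{3/2}|K^+_h|^{-1/2}\lesssim h_K^{1/2}$, and substituting the two estimates above gives the claim. If instead $K^+_h$ is the smaller piece, the modification makes $\bfu_I=I_{K^+_h}\bfu^+_E$ on $K^+_h$, so the $\Gamma^K_h$-mismatch term drops out and only the standard interpolation error of the \emph{smooth} extension $\bfu^+_E$ and the discrepancy $\bfu-\bfu^+_E$ supported on $K^+_{\rm int}$ survive; here I would combine $|K_{\rm int}|\lesssim\tilde{h}_K h_K^2$ from Lemma \ref{lemma_interface_flat} with the projection identity to absorb the remaining negative powers of $|K^+_h|$. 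The single surviving negative power $h_K^{-1/2}$ on $\| \bfu^{\pm}_E \|_{\bfH(\curl;K_{\rm int})}$ is the signature of this small-volume factor, and is exactly the loss that the quadratic smallness of $K_{\rm int}$ (and ultimately the $\delta$-strip estimate of Lemma \ref{lem_delta}) will later compensate in the global analysis. Controlling this $|K^+_h|^{-1}$ cleanly in the thin-element case is the main obstacle.
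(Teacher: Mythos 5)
Your opening move --- replacing the trace inequality on the thin quadrilateral by the computable projection identity \eqref{compute_proj} --- is exactly the paper's idea, but the way you deploy it leaves a genuine gap that you yourself flag at the end. By reducing everything to the single number $|\bfc|$ with $\bfc=\Pi_{K^+_h}(\bfu-\bfu_I)$ and lifting $\bfc$ by a linear $\phi_h$ with $\|\phi_h\|_{L^\infty(K^+_h)}\lesssim h_K|\bfc|$, you arrive at the factors $h_K^{3/2}|K^+_h|^{-1/2}$ and $h_K^{2}|K^+_h|^{-1}$, and these do not cancel precisely in the case the lemma must cover: the quadrilateral $K^+_h$ can be arbitrarily thin (interface cutting close to $A_1A_2$), in which case $|K^+_h|\ll h_K^2$. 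Your case split on $|K^+_h|\ge|K^-_h|$ versus $|K^+_h|<|K^-_h|$ does not rescue this: when $K^+_h$ is thin it is automatically the smaller piece, so indeed $\bfu_I=I_{K^+_h}\bfu^+_E$ there and the $\Gamma^K_h$-mismatch drops, but the surviving terms $\|\curl(\bfu^+_E-I_{K^+_h}\bfu^+_E)\|_{L^2(K^+_h)}\sim h_K|\curl\bfu^+_E|_{H^1}$ and $\|\bfxi_h\cdot\bft\|_{L^2(\partial K^+_h)}\sim h_K^{1/2}\|\bfu^\pm_E\|_{H^1(K)}$ carry no factor of $|K^+_h|$, so your working inequality would require $|K^+_h|\gtrsim h_K^2$ (boundary term) and $|K^+_h|\gtrsim h_K^4$ (curl term) to close --- neither holds. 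The smallness of $K_{\rm int}$ cannot absorb this, because those two terms are not supported on $K_{\rm int}$.

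The missing idea is that the estimate must be done \emph{edge by edge with a lifting adapted to the edge direction}, not for the full vector $\bfc$. The paper first disposes of the short edges $A_1D$, $A_2E$ (and of all edges when $K^+_h$ is non-degenerate) by an ordinary scaling/trace argument, since their supporting heights inside $K^+_h$ do not degenerate. Only for the two long, nearly parallel edges $A_1A_2$ and $DE$ in the thin case does it invoke \eqref{compute_proj}, with $p^e_h$ satisfying $\underline{\curl}\,p^e_h=\bft_e$ for that specific tangent. Because $\bft_e$ is then nearly horizontal, one gets $\|p^e_h\|_{L^\infty(K^+_h)}\lesssim\max\{r_2,s_2\}\sim|K^+_h|/h_K$ (see \eqref{lem_est_qua_3_eq1_1}--\eqref{lem_est_qua_3_eq1_2}) rather than the generic $h_K$ you use; this extra factor $|K^+_h|/h_K^2$ is exactly what cancels the $|K^+_h|^{-1}$ from the projection formula, yielding $({\rm I})\lesssim h_K^{1/2}\|\curl\bfxi_h\|_{L^2(K^+_h)}$ and $({\rm II})\lesssim\|\bfxi_h\cdot\bft\|_{L^2(\partial K^+_h)}$ with no negative powers of the area, after which Lemmas \ref{lem_est_interface} and \ref{lem_est_qua_2} finish the proof. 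Your reduction to $|\bfc|$ discards this anisotropic information (the vertical component of $\bfc$ only ever meets the short edges, so it never needs to be controlled against the long ones), and without recovering it the argument does not close.
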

\begin{proof}
For simplicity, in Figure \ref{fig:interf_interp}, we assume $A_1$ is at the origin, $K$ is contained in the first quadrant, and the edge $A_1A_2$ aligns with the $x_1$-axis having a tangential vector $(1,0)^\intercal$. Let $e$ be an edge of $K^+_h$ with the unit tangential vector $\bft_e$. 

If $e=A_1 D$ or $A_2 E$, since the height within $K^+_h$ with respect to these two edges cannot degenerate, a simple scaling directly leads to 
$$
\| \Pi_{K^+_h} \bfxi_h \cdot \bft \|_{L^2(e)} \lesssim h^{-1/2}_K  \| \Pi_{K^+_h} \bfxi_h \|_{L^2( K^+_h)} \leq h^{-1/2}_K  \| \bfxi_h \|_{L^2( K^+_h)}.
$$
Thus, the estimate follows from Lemma \ref{lem_est_interface}. 
If $e=A_1A_2$ or $DE$, when $|A_1D|\ge \gamma |A_1A_3|$ and $|A_2E| \ge \gamma |A_2A_3|$, with a constant $\gamma\in (0,1)$ bounded away from 0, the trace inequality-based argument above can be still applied. 

The major difficulty is how to deal with edges $e=A_1A_2$ or $DE$ when $K^+_h$ becomes degenerate. Without loss of generality, we assume $|A_1D|\le |A_1A_3|/2$ or $|A_2E| \le |A_2A_3|/2$ (see Figure \ref{fig:interf_interp} (right) for an illustration). In such a case, $|DE| \gtrsim h_K$ independent of the interface location thanks to the law of sines, as either $|A_3D|\ge |A_1A_3|/2$ or $|A_3E|\geq |A_3A_2|/2$. 
Now let $D=(r_1,r_2)$ and $E=(s_1,s_2)$, we have 
\begin{equation}
\label{lem_est_qua_3_eq1_1}
c\, h_K \max\{ r_2, s_2 \} \le |K^+_h| \le C\, h_K \max\{ r_2, s_2 \}. 
\end{equation}
Next, $p^e_h \in \mathbb{P}_1(K^+_h)$ is sought such that $\underline{\text{curl}}\,p^e_h = \bft_e$. Since $\Pi_{K^+_h} \bfxi_h$ is a constant, and by \eqref{compute_proj}, we have
\begin{equation}
\begin{split}
\label{lem_est_qua_3_eq1}
\| \Pi_{K^+_h} \bfxi_h\cdot \bft_e \|_{L^2(e)} & =\frac{ h_e^{1/2}}{ |K^+_h| } \verti{ ( \Pi_{K^+_h} \bfxi_h, \underline{\text{curl}}\, p^e_h )_{K^+_h} } \\
& \le  \underbrace{ \frac{ h_e^{1/2}}{ |K^+_h| }  \verti{ ( \curl   \bfxi_h, p^e_h )_{K^+_h} }}_{({\rm I})} +  \underbrace{ \frac{ h_e^{1/2}}{ |K^+_h| }  \verti{ (  \bfxi_h\cdot\bft, p^e_h )_{L^2(\partial K^+_h)} } }_{({\rm II})} .
\end{split}
\end{equation}
If $e=A_1A_2$, then $\bft_e = (1,0)^\intercal$, and $p^e_h = x_2$ which implies $\| p^e_h \|_{L^\infty(K^+_h)}\le \max\{ r_2, s_2 \} $.  If $e=DE$, then 
$$
\bft_e = \frac{( r_1 - s_1, r_2-s_2 )}{ |DE| },~~~ \text{and} ~~~ p^e_h = \frac{ ( r_1 - s_1)x_2 -  ( r_2 - s_2 )x_1 }{ |DE| }.
$$
Note that $|(r_1 - s_1)x_2| \lesssim h_K \max\{r_2,s_2\}$ and $|(r_2-s_2)x_1| \lesssim h_K \max\{r_2,s_2\}$ by the upper bound in \eqref{lem_est_qua_3_eq1_1}, as a result, the following estimate always holds
\begin{equation}
\label{lem_est_qua_3_eq1_2}
\| p^e_h \|_{L^{\infty}(K^+_h)} \lesssim \max\{r_2, s_2 \}.
\end{equation}
Now we proceed to estimate $({\rm I})$ and $({\rm II})$ individually. For $({\rm I})$, by the lower bound in \eqref{lem_est_qua_3_eq1_1} and \eqref{lem_est_qua_3_eq1_2} there holds
\begin{equation*}
\label{lem_est_qua_3_eq2}
\| p^e_h \|_{L^2(K^+_h)}\lesssim \| p^e_h \|_{L^{\infty}(K^+_h)} |K^+_h|^{1/2} \lesssim \max\{ r_2, s_2 \} |K^+_h|^{1/2} \lesssim |K^+_h|^{3/2}h^{-1}_K.
\end{equation*}
Therefore,
\begin{equation}
\label{lem_est_qua_3_eq3}
({\rm I}) \lesssim  |K^+_h|^{1/2}h^{-1/2}_K \| \curl  \bfxi_h \|_{L^2(K^+_h)} \lesssim h^{1/2}_K \| \curl  \bfxi_h \|_{L^2(K^+_h)} 
\end{equation}
of which the estimate follows from Lemma \ref{lem_est_interface}. 

For $({\rm II})$, on each $e'\subset \partial K^+_h$, using the same argument as above with \eqref{lem_est_qua_3_eq1_1} and \eqref{lem_est_qua_3_eq1_2}, we have $\| p^e_h \|_{L^2(e')} \lesssim |K^+_h| h^{-1/2}_K$.
Lastly, we arrive at
\begin{equation}
\label{lem_est_qua_3_eq5}
({\rm II}) \le \frac{ h_e^{1/2}}{ |K^+_h| }  \|  \bfxi_h\cdot\bft \|_{L^2(\partial K^+_h)} \| p^e_h \|_{L^2(\partial K^+_h)}  \lesssim \|  \bfxi_h\cdot\bft \|_{L^2(\partial K^+_h)}.
\end{equation}
The estimate of the right-hand side above follows from Lemma \ref{lem_est_qua_2}. Putting \eqref{lem_est_qua_3_eq3} and \eqref{lem_est_qua_3_eq5} into \eqref{lem_est_qua_3_eq1} finishes the proof.
\end{proof}

\begin{lemma}
\label{lem_conver_1}
Let $\bfu\in\bfH^1(\emph{curl};\Omega^-\cup\Omega^+)$. Given each interface element $K\in \mathcal{T}^{Bi}_h$, there holds
\begin{equation}
\begin{split}
\label{lem_conver_1_eq0}
 \| (\bfu - \Pi_{K^+_h} \bfu_I)\cdot\bft \|_{L^2(\partial K^+_h)} \lesssim & h^{1/2}_K \| \bfu^{\pm}_E \|_{\bfH^1(\emph{curl};K)} + h^{-1/2}_{K}  \| \emph{curl}\,\bfu^{\pm}_E \|_{L^2(K_{\rm int})}.
\end{split}
\end{equation}
\end{lemma}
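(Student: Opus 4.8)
The plan is to reduce everything to the two estimates already established for $\bfxi_h=\bfu-\bfu_I$, namely Lemma \ref{lem_est_qua_2} (bounding $\bfxi_h\cdot\bft$ on $\partial K^+_h$) and Lemma \ref{lem_est_qua_3} (bounding $\Pi_{K^+_h}\bfxi_h\cdot\bft$ on $\partial K^+_h$), plus one genuinely new boundary estimate. The starting point is the exact identity $\bfu-\Pi_{K^+_h}\bfu_I=(\bfu-\Pi_{K^+_h}\bfu)+\Pi_{K^+_h}(\bfu-\bfu_I)$, whose second summand is precisely $\Pi_{K^+_h}\bfxi_h$. I would dispatch that summand immediately by Lemma \ref{lem_est_qua_3}; the resulting $h_K^{-1/2}\|\bfu^{\pm}_E\|_{\bfH(\curl;K_{\rm int})}$ splits into $h_K^{-1/2}\|\curl\bfu^{\pm}_E\|_{L^2(K_{\rm int})}$, which is already on the right-hand side of \eqref{lem_conver_1_eq0}, and the lower-order part $h_K^{-1/2}\|\bfu^{\pm}_E\|_{L^2(K_{\rm int})}$, which I would absorb into $h_K^{1/2}\|\bfu^{\pm}_E\|_{\bfH^1(\curl;K)}$ using that $K_{\rm int}$ is a sliver of width $\mathcal O(h_K^2)$ (a local version of Lemma \ref{lem_delta}, giving $\|\bfu^{\pm}_E\|_{L^2(K_{\rm int})}\lesssim h_K\|\bfu^{\pm}_E\|_{H^1}$).

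The heart of the argument is the first summand $\|(\bfu-\Pi_{K^+_h}\bfu)\cdot\bft\|_{L^2(\partial K^+_h)}$. The key device is to recenter by a single constant vector $\bar\bfu:=\Pi_K\bfu^{+}_E$, the $L^2$-average of the single-sided extension over the whole shape-regular background triangle $K$. Writing $g:=\bfu-\bar\bfu$, one has $\bfu-\Pi_{K^+_h}\bfu=g-\Pi_{K^+_h}g$, so it suffices to control $\|g\cdot\bft\|_{L^2(\partial K^+_h)}$ and $\|\Pi_{K^+_h}g\cdot\bft\|_{L^2(\partial K^+_h)}$. For $\|g\cdot\bft\|_{L^2(\partial K^+_h)}$ I would split $\partial K^+_h$ into the edges lying on $\partial K$ and the interface edge $\Gamma^K_h$. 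On the former, $\bfu$ coincides with $\bfu^{+}_E$ away from the sliver, and a trace inequality on the shape-regular triangle $K$ together with the Poincaré bound $\|\bfu^{+}_E-\bar\bfu\|_{L^2(K)}\lesssim h_K\|\bfu^{+}_E\|_{H^1(K)}$ yields $\lesssim h_K^{1/2}\|\bfu^{+}_E\|_{H^1(K)}$; this is robust in the aspect ratio precisely because the centering and the trace are taken on $K$, not on the thin $K^+_h$. On $\Gamma^K_h$ I would lift the trace to the auxiliary shape-regular triangle $B_h^K$ (base $\Gamma^K_h$, height $\mathcal O(h_K)$), again controlling $\|g\|_{L^2(B_h^K)}$ by the Poincaré bound on $K\supseteq B_h^K$, and route the one-sided discrepancy $\bfu-\bfu^{+}_E=\bfu^{-}_E-\bfu^{+}_E$ on the $\Omega^-$-portion of $\Gamma^K_h$ through Lemma \ref{lem_est_e_L2}, which contributes exactly a $h_K\|\curl\bfu^{\pm}_E\|_{L^2(K_{\rm int})}$ term.

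For the constant $\Pi_{K^+_h}g$, the degenerate long edges ($A_1A_2$ and $\Gamma^K_h$ when $K^+_h$ is thin) are where a naive trace fails, and here I would borrow the constructive lifting from Lemma \ref{lem_est_qua_3}: for each edge $e$ pick $p^e_h\in\mathbb P_1(K^+_h)$ with $\underline{\curl}\,p^e_h=\bft_e$, use $\Pi_{K^+_h}g\cdot\bft_e=|K^+_h|^{-1}[(\curl g,p^e_h)_{K^+_h}-(g\cdot\bft,p^e_h)_{\partial K^+_h}]$ from \eqref{compute_proj}, and invoke the same bounds $\|p^e_h\|_{L^2(K^+_h)}\lesssim|K^+_h|^{3/2}h_K^{-1}$ and $\|p^e_h\|_{L^2(e')}\lesssim|K^+_h|h_K^{-1/2}$ as in that proof. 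Since $\curl g=\curl\bfu$, the volume term reduces to $h_K^{1/2}\|\curl\bfu\|_{L^2(K^+_h)}\lesssim h_K^{1/2}\|\bfu^{+}_E\|_{\bfH^1(\curl;K)}+h_K^{1/2}\|\curl\bfu^{\pm}_E\|_{L^2(K_{\rm int})}$, and the boundary term reduces to $\|g\cdot\bft\|_{L^2(\partial K^+_h)}$, already bounded in the previous step. Collecting the pieces and using $h_K^{1/2}\le h_K^{-1/2}$ on the $K_{\rm int}$ contribution gives \eqref{lem_conver_1_eq0}.

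I expect the main obstacle to be exactly the interaction of the anisotropy with the interface on the two ``long'' edges $A_1A_2$ and $\Gamma^K_h$: the standard trace inequality there carries a factor of the inverse height of $K^+_h$, which blows up as the quadrilateral degenerates, so one cannot separate $\bfu\cdot\bft$ from $\Pi_{K^+_h}\bfu\cdot\bft$ edgewise. The two-part remedy, recentering on the shape-regular triangle $K$ for the function part and the projection-formula lifting for the constant part, is what keeps every constant independent of the aspect ratio, while confining the interface error to the curl norm over the $\mathcal O(h_K^2)$-wide sliver $K_{\rm int}$.
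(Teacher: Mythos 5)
Your overall architecture is essentially the paper's: split off the projection of the interpolation error, handle the remaining ``function part'' by recentering on the shape-regular triangle $K$ and lifting traces to $B_h^K$, and treat the degenerate long edges of the constant part with the $p_h^e$-lifting from \eqref{compute_proj}. Those pieces are sound. The genuine gap is in your very first reduction: applying Lemma \ref{lem_est_qua_3} as a black box to $\Pi_{K_h^+}(\bfu-\bfu_I)$ imports the term $h_K^{-1/2}\|\bfu_E^{\pm}\|_{L^2(K_{\rm int})}$, and the local sliver inequality you invoke to absorb it, $\|\bfu_E^{\pm}\|_{L^2(K_{\rm int})}\lesssim h_K\|\bfu_E^{\pm}\|_{H^1(K)}$, is false. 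Take $z\equiv 1$: the left side scales like $|K_{\rm int}|^{1/2}\sim h_K^{3/2}$ while the right side scales like $h_K|K|^{1/2}\sim h_K^{2}$, so no $h$-independent constant exists. The correct local version, obtained by combining the strip estimate with a scaled trace inequality on $K$, only yields $\|z\|_{L^2(K_{\rm int})}\lesssim h_K^{1/2}\|z\|_{L^2(K)}+h_K^{3/2}|z|_{H^1(K)}$, which leaves an $O(1)$ residual $\|\bfu_E^{\pm}\|_{L^2(K)}$ after multiplying by $h_K^{-1/2}$; carried into the stabilization term of Theorem \ref{thm_error} this would degrade the final energy rate to $O(h^{1/2})$. (Interpreting the $H^1$ norm globally does not help either: it replaces the local norm in \eqref{lem_conver_1_eq0} by a global one, and summing $h_K\cdot h_K\|\cdot\|^2_{H^1(\Omega^\pm)}$ over the $O(h^{-1})$ interface elements again gives only $O(h^{1/2})$.)

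The paper avoids this entirely by inserting $\Pi_{K_h^+}\bfu_E^{+}$ rather than $\Pi_{K_h^+}\bfu$: the projection term becomes $\Pi_{K_h^+}(\bfu_E^{+}-\bfu_I)$, in which the $\bfu-\bfu_E^{+}$ mismatch on $K_{\rm int}$ never appears; the only interface contributions that survive are the curl-type ones from Lemmas \ref{lem_est_e} and \ref{lem_est_e_L2}, which carry the weight $h_K^{-1/2}\|\curl\bfu_E^{\pm}\|_{L^2(K_{\rm int})}$ appearing in \eqref{lem_conver_1_eq0} and are absorbed only at the global summation stage via \eqref{delta_strip_2} and Lemma \ref{lem_delta}. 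Your argument is repairable in exactly this spirit: either restructure the first split as the paper does, or keep $h_K^{-1/2}\|\bfu_E^{\pm}\|_{\bfH(\curl;K_{\rm int})}$ in the statement of the local bound (as in Lemma \ref{lem_est_qua_3}) and defer its absorption to Theorem \ref{thm_error}, where $\sum_K\|\bfu_E^{\pm}\|_{L^2(K_{\rm int})}^2\lesssim\delta\|\bfu_E^{\pm}\|^2_{H^1(\Omega^{\pm})}$ with $\delta\lesssim h^2$ restores the optimal order. As stated, however, the lemma is not proved by your argument.
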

\begin{proof}
First, the error is decomposed into 
\begin{equation}
\begin{split}
\label{lem_conver_1_eq1}
&  \| (\bfu - \Pi_{K^+_h} \bfu_I)\cdot\bft \|_{L^2(\partial K^+_h)}  \\
\le &  \| (\bfu - \Pi_{K^+_h} \bfu^+_E)\cdot\bft \|_{L^2(\partial K^+_h)}  +  \| (\Pi_{K^+_h} \bfu^+_E - \Pi_{K^+_h} \bfu_I )\cdot\bft \|_{L^2(\partial K^+_h)}.
\end{split}
\end{equation}
Here the estimate of the second term is similar to the one in Lemma \ref{lem_est_qua_3}. Therefore, we only need to estimate the first term in \eqref{lem_conver_1_eq1} which is further decomposed into
\begin{equation}
\begin{split}
\label{lem_conver_1_eq2}
& \| (\bfu - \Pi_{K^+_h} \bfu^+_E)\cdot\bft \|_{L^2(\partial K^+_h)} \\
 \le & \underbrace{ \| (\bfu -  \bfu^+_E)\cdot\bft \|_{L^2(\partial K^+_h)} }_{({\rm I})}+  \underbrace{ \| (\bfu^+_E - \Pi_{K^+_h} \bfu^+_E)\cdot\bft \|_{L^2(\partial K^+_h)} }_{({\rm II})}.
 \end{split}
\end{equation}
Note that $({\rm I})$ is only non-zero on $\Gamma_h^K$ of which the estimate follows form Lemma \ref{lem_est_e_L2}. For $({\rm II})$, if $e\subset \partial K^+_h$ is $A_1D$ or $A_2E$, i.e., it has an $\mathcal{O}(h_K)$ height within $K^+_h$. Then we apply the trace inequality (Lemma 6.3 in Ref.~\refcite{2018CaoChen}) and the approximation result of the $L^2$ projection to obtain
\begin{equation*}
\begin{split}
\label{lem_conver_1_eq3}
({\rm II}) \lesssim  h^{-1/2}_K \| \bfu^+_E - \Pi_{K^+_h}  \bfu^+_E  \|_{L^2( K^+_h)}  \lesssim h^{1/2}_K \| \bfu^+_E \|_{H^1(K^+_h)}.
 \end{split}
\end{equation*}
If $e\subset \partial K^+_h$ is $A_1A_2$ or $DE$ where the corresponding height may become degenerate,
we first apply the trace inequality on the whole shape-regular element $K$, and then apply the Poincar\'e inequality (e.g., see Lemma 5.3 in Ref.~\refcite{2018CaoChen}), to obtain
 \begin{equation*}
\label{lem_conver_1_eq4}
({\rm II}) \lesssim h^{-1/2}_K \| \bfu^+_E - \Pi_{K^+_h}  \bfu^+_E  \|_{L^2( K)} \lesssim h^{1/2}_K \| \bfu^+_E \|_{H^1(K)}.
\end{equation*}
Combining the estimates above finishes the proof.
\end{proof}

\section{Convergence Analysis}
\label{sec:convergence}
In this section, based on the previous results, we estimate the convergence order of the solution errors. In particular, we need to estimate each term in the error bound \eqref{thm_error_eqn_eq0}. Our main task is to estimate those terms on quadrilateral elements. In the following discussion, we still keep our notation that $K^+_h\in\mathcal{T}^q_h$ will be the quadrilateral subelement associated with each interface element $K\in\mathcal{T}^{Bi}_h$.

\begin{theorem}[An \textit{a priori} convergence result for VEM]
\label{thm_error}
Under the same assumption of Theorem \ref{thm_error_eqn}, let $\bfu\in\bfH^1(\emph{curl};\Omega^-\cup\Omega^+)$ and let the background mesh $\mathcal{T}^B_h$ satisfy the assumptions $(A)$ and $(B)$, then the solution $\bfu_h$ to the VEM scheme \eqref{VEM3} admits the error estimates
\begin{equation}
\label{thm_error_eq0}
\| \bs u - \bs u_h \|_{H(\curl; \Omega)} \lesssim h \| \bfu \|_{\bfH^1(\emph{curl};\Omega^+\cup \Omega^-)} + h \Big (\sum_{K^q\in\mathcal{T}^q_h}  |f|_{H^1(K^q)}^2 \Big )^{1/2}.
\end{equation}
\end{theorem}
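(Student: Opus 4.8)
The plan is to chain together the abstract error equation of Theorem~\ref{thm_error_eqn}, the interpolation estimate of Theorem~\ref{thm_interp}, and the anisotropy-robust coercivity of Lemma~\ref{lem_norm_quiv}. First I would invoke the splitting $\bfu-\bfu_h=\bfxi_h+\bfeta_h$ from \eqref{err_decp} with the choice $\bfv_h=\bfu_I$, so that by the triangle inequality it suffices to bound $\|\bfxi_h\|_{\bfH(\curl;\Omega)}$ and $\|\bfeta_h\|_{\bfH(\curl;\Omega)}$ separately. The first is immediate from Theorem~\ref{thm_interp}, which gives $\|\bfxi_h\|_{\bfH(\curl;\Omega)}=\|\bfu-\bfu_I\|_{\bfH(\curl;\Omega)}\lesssim h\|\bfu\|_{\bfH^1(\curl;\Omega^-\cup\Omega^+)}$. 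For the discrete error, since $\alpha_h,\beta_h$ are bounded above and below, summing Lemma~\ref{lem_norm_quiv} over the quadrilaterals of $\mathcal{T}^q_h$ (and noting $\Pi_h$ reduces to the identity on every other element) yields $\|\bfeta_h\|_{\bfH(\curl;\Omega)}\lesssim\vertiii{\bfeta_h}_h$. The whole problem then reduces to estimating the four terms on the right-hand side of \eqref{thm_error_eqn_eq0} with $\bfv_h=\bfu_I$.

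The source contribution $\sum_{K^q}h_K^2|f|^2_{H^1(K^q)}$ is already of the correct size, producing exactly $h\big(\sum_{K^q}|f|^2_{H^1(K^q)}\big)^{1/2}$. For the two consistency terms I would isolate the coefficient mismatch, which is supported only on $\cup K_{\rm int}\subseteq S_\delta$ with $\delta\le C_\Gamma h^2$. Writing $\|\alpha\,\curl\bfu-\alpha_h\curl\bfu_I\|_{L^2(\Omega)}\le\|\alpha(\curl\bfu-\curl\bfu_I)\|_{L^2(\Omega)}+\|(\alpha-\alpha_h)\curl\bfu_I\|_{L^2(\Omega)}$, the first piece is controlled by $\|\curl(\bfu-\bfu_I)\|_{L^2(\Omega)}\lesssim h\|\bfu\|_{\bfH^1(\curl;\Omega^-\cup\Omega^+)}$ via Theorem~\ref{thm_interp}, while in the second piece $\alpha-\alpha_h$ is bounded and strip-supported, so $\|\curl\bfu_I\|_{L^2(\cup K_{\rm int})}\le\|\curl(\bfu_I-\bfu)\|_{L^2(\Omega)}+\|\curl\bfu^{\pm}_E\|_{L^2(S_\delta)}$, both $O(h)$ by Theorem~\ref{thm_interp}, Lemma~\ref{lem_delta}, and the extension bound of Theorem~\ref{thm_ext}. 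The mass term $\|\beta\bfu-\beta_h\Pi_h\bfu_I\|_{L^2(\Omega)}$ is handled the same way: the coefficient-mismatch part is strip-supported and absorbed by $\|\Pi_h\bfu_I-\bfu\|_{L^2(\Omega)}+\|\bfu^{\pm}_E\|_{L^2(S_\delta)}$, where on quadrilaterals I would insert the extension $\bfu^+_E$ and estimate the $L^2$-projection error over the shape-regular $K$ rather than the possibly degenerate $K^+_h$, exactly the device used in Lemma~\ref{lem_conver_1}.

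The delicate term is the stabilization sum $\sum_{K^q}h_K\|(\bfu_I-\Pi_h\bfu_I)\cdot\bft\|^2_{L^2(\partial K^q)}$. I would split $(\bfu_I-\Pi_{K^+_h}\bfu_I)\cdot\bft=-(\bfu-\bfu_I)\cdot\bft+(\bfu-\Pi_{K^+_h}\bfu_I)\cdot\bft$ and apply Lemma~\ref{lem_est_qua_2} and Lemma~\ref{lem_conver_1} to the two pieces, obtaining $\|(\bfu_I-\Pi_{K^+_h}\bfu_I)\cdot\bft\|_{L^2(\partial K^+_h)}\lesssim h^{1/2}_K\|\bfu^{\pm}_E\|_{\bfH^1(\curl;K)}+h^{-1/2}_K\|\curl\bfu^{\pm}_E\|_{L^2(K_{\rm int})}$. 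Squaring, multiplying by $h_K$ and summing turns the first contribution into $h^2\|\bfu^{\pm}_E\|^2_{\bfH^1(\curl;\Omega)}$ (finite overlap of the patches $K\cup B_h^K$), while the second becomes $\sum_K\|\curl\bfu^{\pm}_E\|^2_{L^2(K_{\rm int})}\le\|\curl\bfu^{\pm}_E\|^2_{L^2(S_\delta)}\lesssim\delta\,\|\bfu^{\pm}_E\|^2_{\bfH^1(\curl;\Omega^{\pm})}=O(h^2)$ again by Lemma~\ref{lem_delta} and Theorem~\ref{thm_ext}. Collecting the four estimates gives $\vertiii{\bfeta_h}_h\lesssim h\big(\|\bfu\|_{\bfH^1(\curl;\Omega^-\cup\Omega^+)}+(\sum_{K^q}|f|^2_{H^1(K^q)})^{1/2}\big)$, which together with the bound on $\bfxi_h$ yields \eqref{thm_error_eq0}.

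The main obstacle is the exact cancellation of the negative powers of $h$ that unavoidably enter when tangential edge quantities on the degenerate quadrilateral $K^+_h$ are estimated, namely the $h^{-1/2}_K$ factors in Lemmas~\ref{lem_est_qua_3} and~\ref{lem_conver_1}. These are harmless only because the stabilization \eqref{VEM2} carries the positive weight $h_K$ and the mismatch volume is quadratically small, $\cup K_{\rm int}\subseteq S_\delta$ with $\delta=O(h^2)$; the $h_K$-weight together with the $\sqrt{\delta}=O(h)$ gain from Lemma~\ref{lem_delta} precisely absorbs the $h^{-1/2}_K$, producing the optimal $O(h)$ rate rather than a suboptimal one. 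Verifying that this balance is uniform in the interface location and in the aspect ratio of $K^+_h$ is the crux, and it is exactly here that the anisotropy-robust Poincar\'e inequality (Lemma~\ref{lem_Poincare}) and the coercivity of Lemma~\ref{lem_norm_quiv} are indispensable.
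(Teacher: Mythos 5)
Your proposal is correct and follows essentially the same route as the paper: the splitting $\bfu-\bfu_h=\bfxi_h+\bfeta_h$ with $\bfv_h=\bfu_I$, Theorem \ref{thm_interp} for $\bfxi_h$, the Poincar\'e-based coercivity (Lemma \ref{lem_Poincare}/\ref{lem_norm_quiv}) to pass from $\|\bfeta_h\|_{\bfH(\curl;\Omega)}$ to $\vertiii{\bfeta_h}_h$, and then Lemmas \ref{lem_est_qua_2} and \ref{lem_conver_1} plus the $\delta$-strip argument (Lemma \ref{lem_delta} with $\delta\lesssim h^2$) to estimate each term of \eqref{thm_error_eqn_eq0}. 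Your accounting of how the $h_K$ stabilization weight and the $O(h^2)$ mismatch volume absorb the $h_K^{-1/2}$ factors is exactly the balance the paper relies on, just spelled out in more detail than its terse final paragraph.
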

\begin{proof}
First, by the triangle inequality, we have 
$$
\| \bs u - \bs u_h \|_{H(\curl; \Omega)} \leq \| \bs u - \bs u_I \|_{H(\curl; \Omega)} + \| \bs u_I - \bs u_h \|_{H(\curl; \Omega)}. 
$$
Recall $\bs \eta_h = \bs u_I - \bs u_h$. We use Lemma \ref{lem_Poincare} to obtain
\begin{align*}
\sum_{K\in \mathcal T_h}\|\bs \eta_h \|_K^2 & \le \sum_{K\in \mathcal T_h} \| \Pi_h \bs \eta_h\|_K^2 + \| (I - \Pi_h) \bs \eta_h\|_K^2\\
&\lesssim  \sum_{K\in \mathcal T_h} \| \Pi_h \bs \eta_h\|_K^2 + h_K\| (I - \Pi_h) \bs \eta_h\cdot \bs t \|_{\partial K}^2 + h_K^2\|\curl \bs \eta_h\|^2_K\\
&\lesssim \vertiii{\bfeta_h}_h^2
\end{align*}
Recall that, in Theorem \ref{thm_error_eqn}, we have obtained
\begin{equation}
\begin{split}
\vertiii{\bfeta_h}_h \lesssim &  \left (\sum_{K^q\in\mathcal{T}^q_h} h_K^2 |f|_{H^1(K^q)}^2 + h_K \| (\bfu_I - \Pi_h \bfu_I)\cdot\bft \|_{L^2(\partial K^q)}^2 \right )^{1/2}   \\
& + \| \alpha \, \curl  \bfu - \alpha_h \curl \bfu_I \|_{L^2(\Omega^{\pm})} + \| \beta \bfu - \beta_h \Pi_h \bfu_I \|_{L^2(\Omega)}  .
\end{split}
\end{equation}
Applying Lemma \ref{lem_conver_1} on the stabilization term, Theorem \ref{thm_interp} on the last two terms, together with Lemma \ref{lem_delta} and estimate \eqref{delta_strip_2} by acknowledging that $\alpha_h$, $\beta_h$ are different from $\alpha$, $\beta$ only on $K_{\rm int}$, we have proved the desired estimate.

\end{proof}

\section{Numerical Examples}

In this section, we present a group of numerical experiments to validate the previous estimates. Let the computation domain be $\Omega=(-1,1)\times(-1,1)$, and \revision{the background mesh} be generated by triangulating an $N\times N$ Cartesian mesh by cutting each square into two triangles along its diagonal. We highlight that the proposed method can be used on any other regular background triangular meshes. A circular interface $\{\Gamma:x^2+y^2=r^2_1\}$ cuts $\Omega$ into the inside subdomain $\Omega^-$ and the outside subdomain $\Omega^+$. We consider the following example \cite{2020GuoLinZou,2010LiMelenkWohlmuthZou} \revision{with the exact solution}
\begin{equation}
\label{exact_solu}
\bfu = 
\begin{cases}
      & \left(\begin{array}{c} \mu^-\left(  - k_1(r_1^2 - x^2 -y^2)y \right) \\ \mu^-\left(  -k_1(r_1^2 - x^2 -y^2)x  \right) \end{array}\right) ~~~~ \text{in}~ \Omega^-,  \\
      & \left(\begin{array}{c} \mu^+\left(  - k_2(r_2^2-x^2-y^2)(r_1^2-x^2-y^2)y  \right) \\  \mu^+\left( - k_2(r_2^2-x^2-y^2)(r_1^2-x^2-y^2)x \right) \end{array}\right) ~~~~ \text{in}~ \Omega^+,
\end{cases}
\end{equation}
where the boundary conditions and the right-hand side $\bff$ are calculated accordingly. We employ the parameters $k_2=20$, $k_1=k_2(r_2^2-r_1^2)$ with $r_1=\pi/5$ and $r_2=1$, and fix $\alpha^-=\beta^-=1$ with varying $\alpha^+=10$ or $100$ and $\beta^+=10$ or $100$. For simplicity, we define the errors
\begin{equation}
\label{error_num}
e_0 := \| \bfu - \Pi_h \bfu_h \|_{L^2(\Omega)} ~~~ 
\text{and} ~~~ e_1 := \| \curl(\bfu - \bfu_h )\|_{L^2(\Omega)}.
\end{equation}
The numerical results are presented in Tables \ref{table:alpha10beta10}-\ref{table:alpha100beta100}, and they clearly show the optimal first order convergence for both errors with respect to $h=1/N$.

\begin{table}[htp]
\centering
\setlength\tabcolsep{5pt}
\begin{minipage}{0.48\textwidth}
\centering
\begin{tabular}{|c |c | c|c | c|}
\hline
$h$   & $ e_0 $ & rate   & $e_1$ & rate   \\ \hline
1/10    & 0.6257                 &  NA      & 1.3893              & NA       \\ \hline
1/20    & 0.3258                 &  0.94    & 0.6998               & 0.99    \\ \hline
1/40    & 0.1661                 &  0.97    & 0.3534               & 0.99   \\ \hline
1/80    & 0.0843                 &  0.98    & 0.1784               & 0.99   \\ \hline
1/160  & 0.0424                 &  0.99    & 0.0894               & 1.00  \\ \hline
1/320   &0.0213                 &  1.00    & 0.0447               & 1.00   \\ \hline
1/640   &0.0107                 &  0.99    & 0.0224               & 1.00   \\ \hline
\end{tabular}
\caption{Solution errors for $\alpha^+=10$ and $\beta^+=10$.}
\label{table:alpha10beta10}
\end{minipage}
\hfill
\begin{minipage}{0.48\textwidth}
\centering
\begin{tabular}{|c |c | c|c | c|}
\hline
$h$   & $ e_0 $ & rate   & $e_1$ & rate   \\ \hline
1/10    &  0.6206                 &  NA      & 1.3912             & NA       \\ \hline
1/20    &  0.3257                 &  0.93    & 0.7000              & 0.99    \\ \hline
1/40    &  0.1661                 &  0.97    & 0.3534              & 0.99   \\ \hline
1/80    &  0.0843                 &  0.98    & 0.1784              & 0.99   \\ \hline
1/160  &  0.0424                 &  0.99    & 0.0894              & 1.00  \\ \hline
1/320  &  0.0213                 &  1.00    & 0.0447              & 1.00   \\ \hline
1/640  &  0.0107                 &  1.00    & 0.0224              & 1.00   \\ \hline
\end{tabular}
\caption{Solution errors for $\alpha^+=10$ and $\beta^+=100$.}
\label{table:alpha10beta100}
\end{minipage}
\end{table}

\begin{table}[htp]
\centering
\setlength\tabcolsep{5pt}
\begin{minipage}{0.48\textwidth}
\centering
\begin{tabular}{|c |c | c|c | c|}
\hline
$h$   & $ e_0 $ & rate   & $e_1$ & rate   \\ \hline
1/10    & 0.3266                 &  NA      & 1.0795              & NA       \\ \hline
1/20    & 0.1761                 &  0.89    & 0.5449               & 0.99   \\ \hline
1/40    & 0.0926                 &  0.93    & 0.2768               & 0.98  \\ \hline
1/80    & 0.0482                 &  0.94    & 0.1406               & 0.98  \\ \hline
1/160  & 0.0246                 &  0.97    & 0.0705               & 1.00 \\ \hline
1/320   &0.0124                 &  0.99    & 0.0353               & 1.00  \\ \hline
1/640   &0.0062                 &  0.99    & 0.0177               & 1.00  \\ \hline
\end{tabular}
\caption{Solution errors for $\alpha^+=100$ and $\beta^+=10$.}
\label{table:alpha100beta10}
\end{minipage}
\hfill
\begin{minipage}{0.48\textwidth}
\centering
\begin{tabular}{|c |c | c|c | c|}
\hline
$h$   & $ e_0 $ & rate   & $e_1$ & rate   \\ \hline
1/10    &  0.1938                &  NA      & 0.8877             & NA       \\ \hline
1/20    &  0.1425                &  0.44    & 0.4358              & 1.03    \\ \hline
1/40    &  0.0698                &  1.03    & 0.1976              & 1.14   \\ \hline
1/80    &  0.0368                &  0.92    & 0.1027              & 0.95   \\ \hline
1/160  &  0.0189                &  0.96    & 0.0503              & 1.03  \\ \hline
1/320  &  0.0101                &  0.90    & 0.0264              & 0.93   \\ \hline
1/640  &  0.0054                &  0.91    & 0.0140              & 0.92   \\ \hline
\end{tabular}
\caption{Solution errors for $\alpha^+=100$ and $\beta^+=100$.}
\label{table:alpha100beta100}
\end{minipage}
\end{table}

\section*{Acknowledgment}
This work was supported in part by the National Science Foundation under grants DMS-1913080, DMS-2012465, and DMS-2136075.

\end{document}